\newif\ifdraft\draftfalse
\def\@begintheorem#1#2[#3]{%
    \def\naam{#1}
  \deferred@thm@head{\the\thm@headfont \thm@indent
    \@ifempty{#1}{\let\thmname\@gobble}{\let\thmname\@iden}%
    \@ifempty{#2}{\let\thmnumber\@gobble}{\let\thmnumber\@iden}%
    \@ifempty{#3}{\let\thmnote\@gobble}{\let\thmnote\@iden}%
    \thm@swap\swappedhead\thmhead{#1}{#2}{#3}%
    \the\thm@headpunct
    \thmheadnl % possibly a newline.
    \hskip\thm@headsep
  }%
  \ignorespaces}
\newcommand{\kantlijndraft}[1]{\ifdraft\hspace{-\lastskip}%
\vadjust{\vspace{-1mm}\smash{\llap{{\tt #1}\hspace{8mm}}}\vspace{1mm}}\fi}
\def\voegToe#1#2#3{\immediate\write1{\string\newlabel{#1}{{#2}{#3}}}}
\newcommand{\thlabel}[1]{\voegToe{#1}{\naam\noexpand~\thetheorem}{\thepage}\kantlijndraft{#1}}
\renewcommand{\label}[1]{\voegToe{#1}{\@currentlabel}{\thepage}\kantlijndraft{#1}}
\newtheorem{theorem}{Theorem}[section]
\newtheorem{corollary}[theorem]{Corollary}
\newtheorem{question}[theorem]{Question}
\newtheorem{proposition}[theorem]{Proposition}
\theoremstyle{definition}
\theoremstyle{remark}
\numberwithin{equation}{section}
\newtheorem{claim2}{\sc Claim}
\newcommand{\pichar}[1]{\ensuremath{\pi\chi(#1)}}
\newcommand{\density}[1]{\ensuremath{d(#1)}}
\newcommand{\sse}{\subseteq}						%subset or equal [\sse]
\newcommand{\minus}{\backslash}						%minus (compliment) [\minus]
\newcommand{\Un}{\bigcup}							%union (bigcup) [\Un]
\newcommand{\Meet}{\bigcap}							%intersection (bigcap} [\Meet]
\newcommand{\meet}{\cap}							%intersection (cap} [\meet]
\newcommand{\es}{\varnothing}						%emptyset [\es]
\newcommand{\reals}{\mathbb{R}}
\newcommand{\cl}[1]{\ensuremath{\overline{#1}}}
\newcommand{\scr}[1]{\ensuremath{\mathcal{#1}}}
\def\cprime{$'$}
\def\cont{\mathfrak{c}}
\def\sapirovskii{{\v{S}}apirovski{\u\i}}
\def\arhangelskii{Arhangel{\cprime}ski{\u\i}}
\def\juhasz{Juh{\'a}sz}
\begin{document}

\title{A survey of cardinality bounds on homogeneous topological spaces}

%    Information for authors:
\author{Nathan Carlson}\address{Department of Mathematics, California Lutheran University, 60 W. Olsen Rd, MC 3750, Thousand Oaks, CA 91360 USA}
\email{ncarlson@callutheran.edu}

%%%% change the following:
%\subjclass[2010]{54D20, 54A25, 54D10.}

%\keywords{cardinality bounds, cardinal invariants}

\begin{abstract} 
In this survey we catalogue the many results of the past several decades concerning bounds on the cardinality of a topological space with homogeneous or homogeneous-like properties. These results include van Douwen's Theorem, which states $|X|\leq 2^{\pi w(X)}$ if $X$ is a power homogeneous Hausdorff space \cite{VD78}, and its improvements $|X|\leq d(X)^{\pi\chi(X)}$ \cite{Rid06} and $|X|\leq 2^{c(X)\pi\chi(X)}$ \cite{CR08} for spaces $X$ with the same properties. We also discuss de la Vega's Theorem, which states that $|X|\leq 2^{t(X)}$ if $X$ is a homogeneous compactum \cite{DLV2006}, as well as its recent improvements and generalizations to other settings. This reference document also includes a table of strongest known cardinality bounds on spaces with homogeneous-like properties. The author has chosen to give some proofs if they exhibit typical or fundamental proof techniques. Finally, a few new results are given, notably (1) $|X|\leq d(X)^{\pi n\chi(X)}$ if $X$ is homogeneous and Hausdorff, and (2) $|X|\leq \pi\chi(X)^{c(X)q\psi(X)}$ if $X$ is a regular homogeneous space. The invariant $\pi n\chi(X)$, defined in this paper, has the property $\pi n\chi(X)\leq\pi\chi(X)$ and thus (1) improves the bound $d(X)^{\pi\chi(X)}$ for homogeneous Hausdorff spaces. The invariant $q\psi(X)$, defined in \cite{ism81}, has the properties $q\psi(X)\leq\pi\chi(X)$ and $q\psi(X)\leq\psi_c(X)$ if $X$ is Hausdorff, thus (2) improves the bound $2^{c(X)\pi\chi(X)}$ in the regular, homogeneous setting.
\end{abstract}

\maketitle

%%%%%%%%%%%%%%%%%%%%%%%%%%%%%%%%%%%%%%%%%%%%%%%%%%
\section{Introduction}
A topological space $X$ is \emph{homogeneous} if for every $x,y\in X$ there exists a homeomorphism $h:X\to X$ such that $h(x)=y$. Roughly, $X$ is homogeneous if the topology at every point is ``identical'' to that of every other point. $X$ is \emph{power homogeneous} if there exists a cardinal $\kappa$ such that $X^\kappa$ is homogeneous. Many commonly studied spaces are homogeneous (for example, $\mathbb{R}^2$, the unit circle, all connected manifolds in general, and topological groups) and as such are ubiquitous across fields of mathematics. In particular, those homogeneous spaces that are compact play a prominent role. In 1931 Keller \cite{Keller} showed that the Hilbert Cube $[0,1]^\omega$ is homogeneous. As $[0,1]$ is not homogeneous, this was an early example of a compact power homogeneous space that is not homogenous. Another such example is the ordinal space $\omega+1$, as $(\omega+1)^\omega$ is homogeneous. The author refers the reader to the 2014 book chapter \emph{Topological Homogeneity} by A.V.~\arhangelskii~and J. van Mill ~\cite{av14} for a broad and extensive reference on the theory of general homogeneous topological spaces. 

\arhangelskii~\cite{arh69} showed in 1969 that the cardinality of any compact, first countable space is at most $\cont$, the cardinality of the continuum $\reals$, thus answering a 50 year old question of Alexandroff and Urysohn. Soon afterwards, in 1970, he showed that any compact, homogeneous, sequential space also has cardinality at most $\cont$ \cite{arh70a}\cite{arh70b}. This demonstrated that in the presence of homogeneity the first countable condition can be relaxed to the weaker sequential condition. This might be regarded as the first example of cardinality bound that can be improved if a space is additionally known to be homogeneous. In the decades that followed, and in recent years, many well-known cardinality bounds on topological spaces have been improved with homogeneity, or homogeneous-like properties. 

The purpose of this survey is to give a thorough account of subsequent results concerning the cardinality of a homogeneous topological space. While there are important open problems in the more general theory of cardinal functions on homogeneous spaces (such as van Douwen's Problem, which asks if the cellularity $c(X)$ of a homogeneous compactum is at most $\mathfrak{c}$), in this survey we confine ourselves only to cardinality considerations.

Several proofs are given in this survey. The ones that are chosen were chosen for their illustrative nature, as they are fundamental to the theory of cardinality bounds on homogeneous spaces. They were also chosen for their simplicity and elegance, in the author's opinion. Theorems that have proofs that are more involved and complicated are simply cited in this survey. In addition, there are a few new proofs given in this paper that represent mostly minor improvements of known results. 

This paper is organized as follows. In \S2 we consider van Douwen's Theorem, a groundbreaking 1978 result that established the first cardinality bound for a general homogeneous Hausdorff space. In \S3 and \S4, we explore two bounds for homogeneous Hausdorff spaces that improved van Douwen's Theorem: $d(X)^{\pi\chi(X)}$ and $2^{c(X)\pi\chi(X)}$, respectively. In \S5 we consider compact homogeneous spaces, de la Vega's Theorem, and improvements to that theorem, while in \S6 we look at the many extensions and generalizations of de la Vega's Theorem to the Hausdorff and other settings. In \S7 we consider other results not in these categories, in \S8 we compile a list of questions and give a table of cardinality bounds for homogeneous-like spaces that are strongest, as known to the author.

In this survey we do not assume any additional separation axioms on a topological space. All topological spaces are, in fact, topological spaces. We refer the reader to \cite{EN} and \cite{Juhasz} for all undefined terms.

%%%%%%%%%%%%%%%%%%%%%%%%%%%%%%%%%%%%%%%%%%%%%%%%%%
\section{van Douwen's Theorem}

In 1978 Eric van Douwen \cite{VD78} showed that $|X|\leq 2^{\pi w(X)}$ for a Hausdorff homogeneous space $X$. This was first cardinality bound for a general homogeneous space $X$. Homogeneity, or homogeneous-like properties are necessary in this result; for example, the non-homogeneous space $\beta\omega$ does not satisfy this bound. In fact, van Douwen showed that this bound holds for general power homogeneous spaces using sophisticated ``clustering'' techniques that encode information about projection maps of the form $\pi: X^\kappa\to X$. Van Douwen made extensive use of collections of sets invariant under homeomorphisms. As one can see from the next theorem, his paper was primarily focused on results that imply a space is \emph{not} homogeneous, or not power homogeneous. His cardinality bound was indeed simply just a consequence of this sophisticated theorem.

\begin{theorem}[van Douwen \cite{VD78}, 1978]\label{VDdeep}
If the space $X$ admits a continuous map onto a Hausdorff space $Y$ with $|Y|>2^{\pi w(Y)}$, then no power of $X$ is homogeneous in each of the following cases:
\begin{itemize}
\item[(a)] $f$ is open or is a retraction and $d(X)\leq \pi w(Y)$,
\item[(b)] $f$ is perfect, $X$ is regular, and $d(X)\leq \pi w(Y)$, or
\item[(c)] $X$ is compact Hausdorff and $w(X)\leq 2^{\pi w(Y)}$.
\end{itemize}
\end{theorem}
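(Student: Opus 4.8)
The plan is to argue by contraposition: I would assume that some power $X^\kappa$ is homogeneous and derive that $|Y|\leq 2^{\pi w(Y)}$, directly contradicting the hypothesis $|Y|>2^{\pi w(Y)}$. Write $\tau=\pi w(Y)$ and fix a $\pi$-base $\mathcal{B}$ of $Y$ with $|\mathcal{B}|=\tau$; in cases (a) and (b) also fix a dense set $D\subseteq X$ with $|D|=d(X)\leq\tau$. The entire argument is organized around attaching to each point $y\in Y$ a canonical invariant $\mathcal{C}_y$ lying in a family of size at most $2^{\tau}$, in such a way that the assignment $y\mapsto\mathcal{C}_y$ is injective; this yields $|Y|\leq 2^{\tau}$ at once.

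The heart of the proof, and the place where homogeneity is essential, is the construction and analysis of these clusters. First I would use the standard fact that power homogeneity forces $\pi\chi$ to be realized uniformly: choosing a local $\pi$-base at a single base point $e\in X^\kappa$ of size $\pi\chi(X^\kappa)$ and translating it by self-homeomorphisms of $X^\kappa$ produces a local $\pi$-base of the same size at \emph{every} point, so the local $\pi$-character is constant across $X^\kappa$. Using this uniform local structure together with the map $f$, I would define for each $y\in Y$ a subfamily $\mathcal{C}_y\subseteq\mathcal{B}$ recording how the $\pi$-base of $Y$ meets the fiber $f^{-1}(y)$ and the fixed dense data. The homogeneity of $X^\kappa$ is exactly what forces these clusters to behave uniformly from fiber to fiber, cutting the a priori bound of $2^{2^{\tau}}$ traces down to $2^{\tau}$, while the Hausdorffness of $Y$ is what guarantees that distinct points receive distinct clusters. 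Verifying that the clusters are genuinely homeomorphism-invariant and point-separating is the main obstacle, and it is precisely here that van Douwen's machinery of homeomorphism-invariant families of sets is needed.

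The three cases differ only in how the uniform local structure on $X$ is transported through $f$ into the cluster data, that is, in how images and fibers are controlled. In case (a), if $f$ is open then direct images of basic open sets are open, so a $\pi$-base upstairs pushes forward to a $\pi$-base–like family downstairs, and the bound $d(X)\leq\tau$ keeps the number of relevant sets at most $\tau$; if instead $f$ is a retraction, the same conclusion follows by viewing $Y$ as a retract of $X$ and $Y^\kappa$ as a retract of the homogeneous space $X^\kappa$. In case (b), where $f$ is perfect and $X$ is regular, closedness of $f$ and compactness of the fibers let regularity separate each compact fiber from the complement of a saturated open set, and this separation is the substitute for the direct-image openness used in (a), with $d(X)\leq\tau$ again capping the sizes of the families. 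In case (c), compactness of $X$ makes $f$ automatically perfect and $Y$ compact Hausdorff, so the fiber control of (b) is free, while the hypothesis $w(X)\leq 2^{\pi w(Y)}$ replaces the density bound and gives the ambient estimate $|X|\leq 2^{w(X)}\leq 2^{2^{\tau}}$, which the cluster argument then sharpens to $|Y|\leq 2^{\tau}$.

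In every case the conclusion $|Y|\leq 2^{\tau}$ contradicts $|Y|>2^{\tau}$, so no power of $X$ can be homogeneous. I expect the genuine difficulty to be concentrated entirely in the clustering lemma of the second paragraph, namely establishing homeomorphism-invariance and point-separation of the clusters, whereas the case-by-case transport of open, perfect, or compact structure through $f$ and the accompanying cardinal arithmetic should be comparatively routine.
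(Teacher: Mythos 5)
The paper does not actually prove Theorem~\ref{VDdeep}: it is quoted from van Douwen's 1978 paper \cite{VD78} precisely as a deep result whose proof rests on his ``clustering'' machinery, and the survey only proves much weaker homogeneous-case consequences (Theorems~\ref{aut}, \ref{density}, \ref{ER}). So there is no in-paper proof to compare against, and the question is whether your proposal stands on its own. It does not: it is a strategy outline in which every load-bearing step is deferred rather than carried out. The central object, the cluster $\mathcal{C}_y\subseteq\mathcal{B}$ attached to a point $y\in Y$, is never defined; its homeomorphism-invariance, the injectivity of $y\mapsto\mathcal{C}_y$, and the crucial cardinality reduction from $2^{2^{\tau}}$ to $2^{\tau}$ are all asserted and then explicitly handed off to ``van Douwen's machinery of homeomorphism-invariant families of sets'' --- but that machinery \emph{is} the theorem. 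You concede this yourself in the final paragraph (``the genuine difficulty is concentrated entirely in the clustering lemma''), which is an honest admission that the proof has not been given.

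Beyond the missing core, two of the case-by-case reductions would fail as stated. In case (a), for $f$ a retraction you propose to ``view $Y^\kappa$ as a retract of the homogeneous space $X^\kappa$'' and conclude as if homogeneity were inherited; but retracts of homogeneous spaces need not be homogeneous (the interval $[0,1]$ is a retract of the homogeneous Hilbert cube $[0,1]^\omega$), so nothing follows from this observation alone --- indeed, the whole point of van Douwen's theorem is to extract a cardinality constraint on a quotient-like image $Y$ that is \emph{not} itself assumed homogeneous. In case (c), you claim ``the fiber control of (b) is free,'' but (b) requires $d(X)\leq\pi w(Y)$, a hypothesis that (c) replaces by the much weaker $w(X)\leq 2^{\pi w(Y)}$; the estimate $|X|\leq 2^{2^{\tau}}$ you derive from it is strictly too weak to feed into the argument sketched for (a)/(b), and the sentence saying the cluster argument ``then sharpens'' this to $|Y|\leq 2^{\tau}$ is exactly the gap, not a proof of it.
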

As $d(X)\leq\pi w(X)$ for any space $X$, we have the following corollary, which we will refer to as van Douwen's Theorem.
\begin{corollary}[van Douwen \cite{VD78}, 1978]\label{VD}
If $X$ is power homogeneous then $|X|\leq 2^{\pi w(X)}$. 
\end{corollary}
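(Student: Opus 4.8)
The plan is to obtain the corollary as an immediate contrapositive of Theorem~\ref{VDdeep}; all of the genuine difficulty has already been absorbed into that theorem, so the only work here is to exhibit a suitable map $f$ and to verify the side conditions of one of its cases. Working in the Hausdorff setting that governs this section, I would argue by contradiction: suppose $X$ is power homogeneous yet $\card{X} > \exp{\piweight{X}}$, and then produce a continuous surjection onto a Hausdorff space that triggers the hypothesis of Theorem~\ref{VDdeep}.

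The natural choice is to take $Y = X$ itself and let $f = \mathrm{id}_X$ be the identity map. This is a continuous map onto the Hausdorff space $Y = X$, and by assumption $\card{Y} = \card{X} > \exp{\piweight{X}} = \exp{\piweight{Y}}$, so the overarching hypothesis of the theorem is satisfied. It then remains only to land in one of the cases (a)--(c), and case (a) is tailor-made: the identity map is trivially open, so the sole remaining requirement is $\density{X} \le \piweight{Y} = \piweight{X}$. This is precisely the universal inequality $\density{X} \le \piweight{X}$ recorded just above the corollary (one selects a point from each member of a $\pi$-base to form a dense subset). With case (a) in hand, Theorem~\ref{VDdeep} concludes that no power of $X$ is homogeneous, contradicting power homogeneity; hence $\card{X} \le \exp{\piweight{X}}$.

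There is no real obstacle to overcome: the entire argument reduces to the observation that choosing $Y = X$ with the identity map converts the deep theorem into the single elementary inequality $\density{X} \le \piweight{X}$. The only point requiring care is the implicit Hausdorff assumption on $X$ — it is needed so that $Y = X$ qualifies as the Hausdorff target demanded by Theorem~\ref{VDdeep}, and indeed without some separation axiom the bound fails (an infinite indiscrete space is homogeneous with $\piweight{X} = 1$ but arbitrarily large cardinality).
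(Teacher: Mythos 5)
Your proposal is correct and matches the paper's own (implicit) derivation: the paper deduces the corollary from Theorem~\ref{VDdeep} by exactly the observation $d(X)\leq\pi w(X)$, which amounts to applying case (a) with $Y=X$ and $f=\mathrm{id}_X$ (open, indeed a retraction), just as you do. Your added remark that the Hausdorff hypothesis is implicit (and necessary, e.g.\ for indiscrete spaces) is consistent with the paper, which frames all of \S2 in the Hausdorff setting.
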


In fact, it also follows from Theorem \ref{VDdeep} that no power of $\beta\omega\minus\omega$ is homogeneous, answering a question of Murray Bell. (Frolik \cite{Fro} had previously shown this space is not homogeneous in ZFC). 

While van Douwen's work was groundbreaking and answered important questions, it turns out that proofs that $|X|\leq 2^{\pi w(X)}$ when $X$ is \emph{homogeneous} are, by comparison, straightforward. In this survey we'll see several proofs that imply this result (see Theorems~\ref{aut}, \ref{density}, and \ref{ER}). Furthermore, the full version of van Douwen's Theorem, in the power homogeneous case, has been improved upon in different directions in subsequent decades. See Theorems~\ref{ridHausdorff}, \ref{open}, and \ref{ERPH}.

Corollary~\ref{VD} has a straightforward improvement in the homogeneous case by considering the group $H(X)$ of autohomeomorphisms on a space $X$. This was shown by Frankiewicz. The proof we give here is adapted from that given in \cite{Juhasz}, 2.38. The author considers this proof to involve basic techniques that are typically used when considering bounds on the cardinality of a homogeneous space and the group $H(X)$. 

\begin{theorem}[Frankiewicz \cite{Fra79}, 1979]\label{aut}
If $X$ is Hausdorff then $|H(X)|\leq 2^{\pi w(X)}$.
\end{theorem}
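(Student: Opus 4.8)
The plan is to avoid bounding $|X|$ (which is impossible in terms of $\pi w(X)$ alone, as $\beta\omega$ shows) and instead to bound $|H(X)|$ by counting the automorphisms that autohomeomorphisms induce on the regular open algebra $\mathrm{RO}(X)$. Write $\kappa=\pi w(X)$ and fix a $\pi$-base $\mathcal B$ with $|\mathcal B|=\kappa$. First I would record two structural facts about the complete Boolean algebra $\mathrm{RO}(X)$. Setting $\mathcal B^\ast=\{\operatorname{int}\overline B:B\in\mathcal B\}$, the family $\mathcal B^\ast$ is join-dense in $\mathrm{RO}(X)$: every nonempty regular open $U$ contains some $B\in\mathcal B$, hence contains the nonempty element $\operatorname{int}\overline B\le U$, and a routine check then gives $U=\bigvee\{b\in\mathcal B^\ast:b\le U\}$. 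Consequently the assignment $U\mapsto\{b\in\mathcal B^\ast:b\le U\}$ is injective, so $|\mathrm{RO}(X)|\le 2^{|\mathcal B^\ast|}\le 2^{\kappa}$.

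Next I would observe that each $h\in H(X)$ induces a map $h_\#\colon\mathrm{RO}(X)\to\mathrm{RO}(X)$ given by $h_\#(U)=h(U)$. Since a homeomorphism commutes with closure and interior it carries regular open sets to regular open sets, and since it commutes with arbitrary unions the map $h_\#$ preserves arbitrary joins; thus $h_\#$ is a complete automorphism of $\mathrm{RO}(X)$. Because a complete automorphism is determined by its restriction to the join-dense set $\mathcal B^\ast$, the map $h_\#\mapsto h_\#\restriction\mathcal B^\ast$ is injective; as there are at most $|\mathrm{RO}(X)|^{|\mathcal B^\ast|}\le (2^{\kappa})^{\kappa}=2^{\kappa}$ functions in $\mathrm{RO}(X)^{\mathcal B^\ast}$, we obtain $|\{h_\#:h\in H(X)\}|\le 2^{\kappa}$.

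The main obstacle is the final step, namely showing that $h\mapsto h_\#$ is itself injective, and this is exactly where the Hausdorff hypothesis is used. Suppose $h_{1\#}=h_{2\#}$ and put $g=h_2^{-1}\circ h_1\in H(X)$, so that $g$ fixes every regular open set setwise. Then for each $x$ and each regular open $U\ni x$ we have $g(x)\in g(U)=U$, whence $g(x)\in\bigcap\{U\in\mathrm{RO}(X):x\in U\}$. The crux is the lemma that in a Hausdorff space this intersection equals $\{x\}$: given $y\neq x$, pick disjoint open $V\ni x$ and $W\ni y$; since $W$ is a neighborhood of $y$ missing $V$ we get $y\notin\overline V$, so $\operatorname{int}\overline V$ is a regular open set containing $x$ but not $y$. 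Hence $g(x)=x$ for all $x$, so $g=\mathrm{id}$ and $h_1=h_2$. Combining the three steps yields $|H(X)|\le 2^{\kappa}=2^{\pi w(X)}$, as required.
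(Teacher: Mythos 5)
Your proof is correct, but it is organized quite differently from the paper's. The paper gives a single direct injection $\phi:H(X)\to\mathcal{P}(\mathcal{B})^{\mathcal{B}}$, $\phi(h)(B)=\{C\in\mathcal{B}:C\subseteq h[B]\}$, and proves injectivity by a pairwise separation argument: if $f(x)\neq g(x)$, separate by disjoint open $U,V$, choose basic $B\subseteq f^{\leftarrow}[U]\cap g^{\leftarrow}[V]$, and then any basic $C\subseteq f[B]$ lies inside $U$ and hence not inside $g[B]\subseteq V$, so $\phi(f)(B)\neq\phi(g)(B)$. You instead factor the count through the complete Boolean algebra $\mathrm{RO}(X)$: (i) $|\mathrm{RO}(X)|\leq 2^{\kappa}$ via join-density of $\mathcal{B}^{*}$; (ii) each $h$ induces a complete automorphism $h_{\#}$ determined by its restriction to $\mathcal{B}^{*}$, so at most $(2^{\kappa})^{\kappa}=2^{\kappa}$ such automorphisms arise; (iii) the representation $h\mapsto h_{\#}$ is faithful when $X$ is Hausdorff, because regular open neighborhoods separate points. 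The underlying use of the Hausdorff axiom is the same trick in both proofs (disjoint open sets, pass to $\operatorname{int}\overline{V}$), but your decomposition is genuinely different: it isolates the faithfulness of the action of $H(X)$ on $\mathrm{RO}(X)$, a fact of independent interest that connects naturally to the paper's later use of $\mathrm{RO}(X)$ in Ismail's theorem ($|X|\leq|RO(X)|^{q\psi(X)}$ for homogeneous spaces), and it in fact proves the formally stronger statement that the group of complete automorphisms of $\mathrm{RO}(X)$ has size at most $2^{\pi w(X)}$, with $H(X)$ embedding into it. What the paper's route buys in exchange is economy and self-containedness: one map, one separation argument, no Boolean-algebraic machinery, and that direct coding-by-injection template is exactly the one reused in the survey's subsequent fundamental proofs (the $d(X)^{\pi\chi(X)}$, $d(X)^{\pi n\chi(X)}$, and Erd\H{o}s--Rado arguments).
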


\begin{proof}
Let $\kappa =\pi w(X)$ and let $\scr{B}$ be a $\pi$-base for $X$ such that $|\scr{B}|\leq\kappa$. We show that map $\phi:H(X)\to\scr{P}(\scr{B})^\scr{B}$ defined by $\phi(h)(B)=\{C\in\scr{B}:C\sse h[B]\}$ is one-to-one.  Suppose we have $f,g\in H(X)$ such that $f\neq g$. Then there exists $x\in X$ such that $f(x)\neq g(x)$. Let $U$ and $V$ be disjoint neighborhoods of $f(x)$ and $g(x)$, respectively. As $x\in f^{\leftarrow}[U]\meet g^{\leftarrow}[V]$, there exists $B\in\scr{B}$ such that $B\sse f^{\leftarrow}[U]\meet g^{\leftarrow}[V]$. Thus $f[B]\sse U$ and $g[B]\sse V$. There exists $C\in\scr{B}$ such that $C\sse f[B]$, and thus $C$ is not a subset of $g[B]$. This shows $\phi(f)(B)\neq\phi(g)(B)$ and $\phi(f)\neq\phi(g)$. We conclude that $\phi$ is one-to-one and therefore $|H(X)|\leq|\scr{P}(\scr{B})^\scr{B}|\leq (2^\kappa)^\kappa=2^\kappa$.
\end{proof}

As it is easily seen that $|X|\leq |H(X)|$ if $X$ is homogeneous, the homogeneous case of \ref{VD} follows. Years later, in 2008, an improved bound for $|H(X)|$ was given by the author and Ridderbos using the Erd\"os-Rado theorem from partition theory.

\begin{theorem}[C., Ridderbos \cite{CR08}, 2008]\label{CRH}
If $X$ is Hausdorff then $|H(X)|\leq 2^{c(X)\pi\chi(X)sd(X)}$.
\end{theorem}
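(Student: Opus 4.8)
The tool is the Erd\H{o}s--Rado partition relation $(\exp{\ka})^+\to(\ka^+)^2_\ka$, applied to pairs of homeomorphisms. Write $\ka = \cellularity{X}\pichar{X}\sd{X}$; the plan is to assume $\card{\aut{X}} > \exp{\ka}$ and derive a contradiction. Fix a subfamily $\scr{F}\sse\aut{X}$ with $\card{\scr{F}} = (\exp{\ka})^+$ and well-order it.

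The first step attaches separating data to each pair. Given distinct $f,g\in\scr{F}$ they disagree at some point, and since $X$ is Hausdorff the two images lie in disjoint open sets; choosing local $\pi$-bases of size at most $\pichar{X}$ at those images, I would select disjoint nonempty $\pi$-base members $V_f$ and $V_g$ that witness $f\neq g$. The essential economy is to keep the witnessing point from ranging over all of $X$: since a continuous map into a Hausdorff space is determined by its restriction to a dense set, two distinct homeomorphisms already differ on a fixed separating set, and the number of witnesses one must allow is what the factor $\sd{X}$ controls. The separating data of a pair is then recorded by at most $\pichar{X}$ choices for each of the two $\pi$-base indices together with at most $\sd{X}$ choices for the witness --- a label drawn from a set of size at most $\ka$.

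This defines a coloring $\Phi\colon[\scr{F}]^2\to\ka$, the well-order being used to record the data of the smaller and the larger member of each pair in a fixed order. Erd\H{o}s--Rado then yields a homogeneous $\scr{G}\sse\scr{F}$ with $\card{\scr{G}} = \ka^+$ on which $\Phi$ is constant, so every pair from $\scr{G}$ is separated by the \emph{same} configuration of indices and the \emph{same} witness. From this uniform configuration I would read off, for each $f\in\scr{G}$, a single nonempty open set $O_f$, and argue that constancy of the coloring forces $\{O_f : f\in\scr{G}\}$ to be pairwise disjoint. That produces a cellular family of size $\ka^+ > \cellularity{X}$, the desired contradiction; hence $\card{\aut{X}}\leq\exp{\ka}$.

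The main obstacle is this last step: constancy of $\Phi$ only delivers disjointness of the asymmetric, order-dependent form ``the index-$\alpha$ set of the smaller member misses the index-$\beta$ set of the larger member'', and converting this into an honest pairwise-disjoint (cellular) family requires the standard but delicate symmetrization --- passing to a further homogeneous set, or intersecting the two indexed neighborhoods attached to each $f$ --- so that a single $O_f$ depending only on $f$ can be extracted. Arranging the coloring to land in exactly $\ka$ colors (so that Erd\H{o}s--Rado applies) is the other point demanding care, and it is precisely there that all three factors $\cellularity{X}$, $\pichar{X}$, and $\sd{X}$ are consumed.
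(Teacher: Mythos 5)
Your overall framework is the right one---an Erd\H{o}s--Rado coloring whose payoff is a cellular family of size $\kappa^+$ contradicting $c(X)\leq\kappa$, exactly the technique the paper uses for Theorem~\ref{ER}---but your coloring records the wrong data, and the obstacle you flag at the end is not a routine symmetrization problem: it is fatal to the argument as you have set it up. Because you take the members $V_f$, $V_g$ from local $\pi$-bases at the \emph{images} $f(z)$, $g(z)$, constancy of the coloring on a homogeneous family yields only the asymmetric statement: for $f\prec g$, the $\alpha$-th member $A_f$ of the fixed $\pi$-base at $f(z)$ misses the $\beta$-th member $B_g$ of the fixed $\pi$-base at $g(z)$. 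Neither repair you suggest works. Setting $O_f=A_f\meet B_f$ does give pairwise disjoint sets, but a member of a local $\pi$-base need not contain (or be anywhere near) its point, so $A_f\meet B_f$ may be empty. And no further homogeneous subfamily can help, because the relation you have extracted is consistent with there being no large cellular family at all: one may freely adjoin any nonempty open set to a local $\pi$-base, so local $\pi$-bases at distinct points can share members, and nothing in your coloring prevents $A_f$ from being one and the same open set $A$ for every $f$ and $B_g$ one and the same set $B$ disjoint from $A$. In that situation your coloring is constant, yet $\{A_f\}$, $\{B_g\}$, $\{A_f\meet B_f\}$ produce no cellular family whatsoever. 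Constancy of your coloring simply does not encode cellularity.

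The missing idea is to pull the separation back to the witness point, so that a \emph{single} $\pi$-base member serves both homeomorphisms. Fix a separating set $Z$ with $|Z|=sd(X)$ (this is just the definition of $sd(X)$; your remark about dense sets is only the reason that $sd(X)\leq d(X)$), and fix in advance, for each $z\in Z$, a local $\pi$-base $\scr{B}_z$ at $z$ with $|\scr{B}_z|\leq\pi\chi(X)$. Given $f\neq g$ in $\scr{F}$, choose $z\in Z$ with $f(z)\neq g(z)$ and disjoint open sets $U\ni f(z)$, $V\ni g(z)$. Then $f^{\leftarrow}[U]\meet g^{\leftarrow}[V]$ is an open neighborhood of $z$, so it contains some $B(f,g)\in\scr{B}_z$; consequently $f[B(f,g)]\sse U$ and $g[B(f,g)]\sse V$, hence $f[B(f,g)]\meet g[B(f,g)]=\es$ --- a symmetric conclusion involving a single set attached to the pair. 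Color $\{f,g\}$ by $(z,B(f,g))$: there are at most $sd(X)\pi\chi(X)\leq\kappa$ colors, so Erd\H{o}s--Rado applied to $\scr{F}$ of size $(2^\kappa)^+$ gives $\scr{G}$ of size $\kappa^+$ on which the color is constantly $(z,B)$. Now $\{f[B]:f\in\scr{G}\}$ is a family of nonempty open sets (images of the nonempty open set $B$ under homeomorphisms) that is pairwise disjoint by constancy, i.e.\ a cellular family of size $\kappa^+>c(X)$, the desired contradiction. This is precisely the device in the paper's proof of Theorem~\ref{ER}, with the fixed point $p$ and the homeomorphisms $h_x$ there replaced by the points of $Z$ and arbitrary members of $H(X)$; the extra factor $sd(X)$ in the exponent is exactly the price of letting the witness $z$ range over $Z$ rather than being a single fixed point.
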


The \emph{separation degree} $sd(X)$ in the cardinal inequality above is defined as follows. We say that a subset $Z$ of $X$ \emph{separates} a subset $\scr{G}$ of $H(X)$, if for all $f,g\in\scr{G}$ with $f\not= g$ there is some $z\in Z$ with $f(z)\not= g(z)$. $sd(X)$ is defined by
$sd(X) = \min\{ |Z| : Z~\mbox{separates}~H(X)\}$. It is always the case that $sd(X)\leq\density{X}$, and thus \ref{CRH} is a logical improvement of \ref{aut}. 

%%%%%%%%%%%%%%%%%%%%%%%%%%%%%%%%%%%%%%%%%%%%%%%%%%
\section{The cardinality bound $d(X)^{\pi\chi(X)}$}
The proof of Theorem~\ref{aut} gives a straightforward way to demonstrate van Douwen's Theorem in the homogeneous case. A few years later, in 1981, Ismail~\cite{ism81} gave another relatively simple proof with a slightly stronger conclusion that used the notion of a $q$-pseudo base. For a point $x$ in a space $X$, a family $\scr{B}$ of nonempty open subsets of $X$ is a $q$\emph{-pseudo base} of $x$ in $X$ if for each $y\in X$ such that $y\neq x$, there is a subfamily $\scr{C}$ of $\scr{B}$ such that $x\in\overline{\Un\scr{C}}$ and $y\notin\overline{\Un\scr{C}}$. Ismail defined the \emph{$q$-pseudo character} of $x$ in $X$ by $q\psi(x,X)=\min\{|\scr{B}|:\scr{B}\textup{ is a }q\textup{-pseudo base of }x\textup{ in }X\}$ and the $q$-pseudo character of $X$ by $q\psi(X)=\sup\{q\psi(x,X):x\in X\}$. It was shown in \cite{ism81} that if $X$ is Hausdorff then $q\psi(X)\leq |X|$, $q\psi(X)\leq \pi\chi(X)$, and that $q\psi(X)\leq\psi_c(X)$. 

Recall a set $U$ in a space $X$ is regular open if $U=intclU$ and that $RO(X)$ denotes the collection of regular open subsets of $X$. Ismail showed the following fundamental result, for which we provide a proof. Integral to the proof is the fact that in a homogeneous space $X$ if one fixes a point $p\in X$ there exist homeomorphisms $h_x:X\to X$ such that $h_x(p)=x$ for each $x\in X$. These homeomorphisms play an important role in most proofs of cardinality bounds on homogeneous spaces. The proof exhibits how these homeomorphisms interact with the invariant family $RO(X)$. Upon examination, the proof does not require the Hausdorff property, despite the fact that Ismail listed that property as an hypothesis. 

\begin{theorem}[Ismail \cite{ism81}, 1981]\label{ismailq}
If $X$ is a homogeneous space, then $|X|\leq |RO(X)|^{q\psi(X)}$.
\end{theorem}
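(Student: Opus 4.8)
The plan is to produce a one-to-one map $\phi$ from $X$ into the set of functions from a fixed $q$-pseudo base into $\ro{X}$; since that set has cardinality at most $|\ro{X}|^{q\y(X)}$, injectivity of $\phi$ immediately gives the bound. To set things up, let $\kappa=q\y(X)$, fix a point $p\in X$, and choose a $q$-pseudo base $\scr{B}$ of $p$ with $|\scr{B}|\leq\kappa$. Using homogeneity, select for each $x\in X$ a homeomorphism $h_x\colon X\to X$ with $h_x(p)=x$; these are exactly the maps highlighted before the statement. Because a homeomorphism preserves interiors and closures, $\interior{\cl{h_x[B]}}\in\ro{X}$ for every $B\in\scr{B}$, so I can define $\phi(x)\in\ro{X}^{\scr{B}}$ by $\phi(x)(B)=\interior{\cl{h_x[B]}}$, and the codomain has size at most $|\ro{X}|^{\kappa}$.

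The routine ingredients are two closure identities I would record first: for any open set $W$ the regular open hull $\interior{\cl{W}}$ has the same closure as $W$, so knowing $\phi(x)(B)$ is the same as knowing $\cl{h_x[B]}$; and if $\cl{U_i}=\cl{V_i}$ for all $i$, then $\cl{\Un_i U_i}=\cl{\Un_i V_i}$. Granting these, suppose $\phi(x)=\phi(y)$. Then $\cl{h_x[B]}=\cl{h_y[B]}$ for every $B\in\scr{B}$, and applying the homeomorphism $h_y^{-1}$ to both sides yields $\cl{g[B]}=\cl{B}$ for all $B\in\scr{B}$, where $g=h_y^{-1}\circ h_x$ is a homeomorphism and $z_0:=g(p)=h_y^{-1}(x)$.

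The heart of the matter, and the step I expect to be the main obstacle, is passing from these closure equalities to $x=y$. A direct comparison of the two image families $h_x[\scr{B}]$ and $h_y[\scr{B}]$ is doomed, since one family can serve as a $q$-pseudo base at several distinct points; the idea that rescues the argument is to collapse the two homeomorphisms into the single map $g$ and use its rigidity. Assume $x\neq y$, equivalently $z_0\neq p$. Since $\scr{B}$ is a $q$-pseudo base of $p$, choose $\scr{C}\sse\scr{B}$ with $p\in\cl{\Un\scr{C}}$ and $z_0\notin\cl{\Un\scr{C}}$. Applying the second closure identity to $\cl{g[B]}=\cl{B}$ over $B\in\scr{C}$ gives $g[\cl{\Un\scr{C}}]=\cl{\Un_{B\in\scr{C}}g[B]}=\cl{\Un\scr{C}}$, so $g$ carries the closed set $\cl{\Un\scr{C}}$ onto itself. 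As $p\in\cl{\Un\scr{C}}$, this forces $z_0=g(p)\in\cl{\Un\scr{C}}$, contradicting the choice of $\scr{C}$. Hence $\phi$ is one-to-one and $|X|\leq|\ro{X}|^{q\y(X)}$. I note that the argument uses only that $X$ is homogeneous and never the Hausdorff axiom, matching the remark preceding the statement.
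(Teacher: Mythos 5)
Your proof is correct and follows essentially the same route as the paper's: the same injection $\phi:X\to RO(X)^{\scr{B}}$ built from the homeomorphisms $h_x$, with injectivity extracted from the $q$-pseudo base separation property and the fact that homeomorphisms preserve closures of unions; your two departures --- recording $\interior{\cl{h_x[B]}}$ rather than first replacing $\scr{B}$ by the $q$-pseudo base $\{\interior{\cl{B}}:B\in\scr{B}\}\sse RO(X)$, and arguing by contradiction through $g=h_y^{-1}\circ h_x$ --- are cosmetic repackagings of the paper's steps. One small correction: your side remark that directly comparing $h_x[\scr{B}]$ and $h_y[\scr{B}]$ is ``doomed'' is not accurate, since the paper does exactly that, choosing $\scr{C}$ to separate $p$ from $h_x^{\leftarrow}(y)$ so that $y\in\cl{\Un\{h_y[C]:C\in\scr{C}\}}$ while $y\notin\cl{\Un\{h_x[C]:C\in\scr{C}\}}$, which forces $h_x[C]\neq h_y[C]$ for some $C$; your $g$-argument is just this comparison composed with $h_y^{-1}$.
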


\begin{proof}
Fix a point $p\in X$ and let $\scr{B}$ be a q-pseudo base at $x$ in $X$ such that $|\scr{B}|=q\psi(X)$. Without loss of generality we can assume that $\scr{B}\sse RO(X)$, for otherwise we could consider the q-pseudo base $\{int\overline{B}:B\in\scr{B}\}\sse RO(X)$, which has the same cardinality as $\scr{B}$. For all $x\in X$, there exists a homeomorphism $h_x:X\to X$ such that $h_x(p)=x$. 

Define a function $\phi:X\to RO(X)^\scr{B}$ by $\phi(x)(B)=h_x[B]$. (Note that if $B\in RO(X)$ then $h_x[B]\in RO(X)$). We show $\phi$ is one-to-one. Let $x,y\in X$ such that $x\neq y$. Then $h_x^{\leftarrow}(y)\neq p$. As $\scr{B}$ is a q-pseudo base at $p$, there exists $\scr{C}\sse\scr{B}$ such that $p\in\overline{\Un\scr{C}}$ and $h_x^{\leftarrow}(y)\notin \overline{\Un\scr{C}}$. Therefore, $y=h_y(p)\in\overline{\Un\{h_y[C]:C\in\scr{C}\}}$ and $y\notin\overline{\Un\{h_x[C]:C\in\scr{C}\}}$. It follows that there exists a $C\in\scr{C}\sse\scr{B}$ such that $h_x[C]\neq h_y[C]$, and thus $\phi(x)(C)\neq \phi(y)(C)$ and $\phi(x)\neq\phi(y)$. This shows $\phi$ is one-to-one and $|X|\leq\left|RO(X)^{\scr{B}}\right|\leq |RO(X)|^{q\psi(X)}$.
\end{proof}

To see that the above result is logically stronger than Theorem~\ref{VD} in the homogeneous case, recall that $|RO(X)|\leq 2^{d(X)}$ for an arbitrary space $X$ (see, for example, 2.6d in \cite{Juhasz}) and that $\pi w(X) = d(X)\pi\chi(X)\geq d(X)q\psi(X)$ if $X$ is Hausdorff.

\begin{corollary}[Ismail \cite{ism81}, 1981]\label{ismail}
If $X$ is a homogeneous Hausdorff space, then $|X|\leq |RO(X)|^{\pi\chi(X)}$.
\end{corollary}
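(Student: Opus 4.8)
The plan is to obtain this as an immediate consequence of Theorem~\ref{ismailq} together with the comparison between the $q$-pseudo character and the $\pi$-character. First I would apply Theorem~\ref{ismailq} to the homogeneous space $X$, which yields $|X|\leq |RO(X)|^{q\psi(X)}$ with no separation axiom required. The sole additional ingredient is the inequality $q\psi(X)\leq\pi\chi(X)$, valid for Hausdorff $X$, which was recorded above and is due to Ismail~\cite{ism81}.

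The decisive step is then just monotonicity of cardinal exponentiation in the exponent: from $q\psi(X)\leq\pi\chi(X)$ we get $|RO(X)|^{q\psi(X)}\leq |RO(X)|^{\pi\chi(X)}$. Combining this with the bound supplied by Theorem~\ref{ismailq} gives $|X|\leq |RO(X)|^{\pi\chi(X)}$, which is the claim.

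There is essentially no obstacle here; all the real content resides in Theorem~\ref{ismailq}, and the corollary is merely a weakening of its conclusion obtained by trading the sharper invariant $q\psi(X)$ for the larger, more standard $\pi\chi(X)$. The only role played by the Hausdorff hypothesis is to justify the substitution $q\psi(X)\leq\pi\chi(X)$: Theorem~\ref{ismailq} itself holds for arbitrary homogeneous spaces, so the separation axiom is invoked solely to validate this single inequality. For completeness one might also note that this inequality is what makes the corollary a genuine improvement of Corollary~\ref{VD} in the homogeneous Hausdorff setting, via the relations $|RO(X)|\leq 2^{d(X)}$ and $\pi w(X)=d(X)\pi\chi(X)$ mentioned above.
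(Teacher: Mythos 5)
Your proposal is correct and matches the paper's own route exactly: the corollary is obtained by applying Theorem~\ref{ismailq} and then weakening the exponent via Ismail's inequality $q\psi(X)\leq\pi\chi(X)$ for Hausdorff spaces. You also correctly identify that the Hausdorff hypothesis is used only for this single inequality, which is precisely how the paper presents it.
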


It was noted independently by de la Vega~\cite[Theorem 1.14]{DLVthesis} and Ridderbos~\cite[Proposition 2.2.7]{RidMasters} that $|RO(X)|$ can be replaced in \ref{ismail} by the invariant $d(X)$. The author considers the proof of this result to be elegant, representative, and a fundamental model for more sophisticated related cardinality bounds on spaces with homogeneous-like properties. One might consider this result the analogue of the cardinality bound $|X|\leq d(X)^{\chi(X)}$ for a general Hausdorff space $X$ and, in fact, involves a simpler one-to-one map argument using the homeomorphisms $h_x$. We give this proof below and the reader should view it as fundamental in the theory of cardinality bounds on homogeneous spaces.

\begin{theorem}[de la Vega~\cite{DLVthesis}, 2005, and Ridderbos~\cite{RidMasters}]\label{density}
If $X$ is homogeneous and Hausdorff then $|X|\leq d(X)^{\pichar{X}}$.
\end{theorem}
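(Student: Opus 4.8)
The plan is to imitate the proof of Theorem~\ref{ismailq}, but to replace the codomain $RO(X)^{\scr{B}}$ by a set of functions whose cardinality is exactly $\density{X}^{\pichar{X}}$ rather than $|RO(X)|^{q\psi(X)}$. First I would fix a point $p\in X$, a dense set $D\sse X$ with $|D|=\density{X}$, and a local $\pi$-base $\scr{V}$ at $p$ (that is, a family of nonempty open sets such that every neighborhood of $p$ contains a member of $\scr{V}$) with $|\scr{V}|\leq\pichar{X}$; as in the previous proofs, for each $x\in X$ fix a homeomorphism $h_x:X\to X$ with $h_x(p)=x$. For every $x\in X$ and every $V\in\scr{V}$ the set $h_x[V]$ is nonempty and open, so it meets the dense set $D$; I would then choose a point $\phi(x)(V)\in D\meet h_x[V]$. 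This defines a map $\phi:X\to D^{\scr{V}}$, and since $\left|D^{\scr{V}}\right|\leq\density{X}^{\pichar{X}}$, it suffices to show that $\phi$ is one-to-one.

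For the injectivity, suppose $x\neq y$. The key reduction is that it is enough to produce a single $V\in\scr{V}$ with $h_x[V]\meet h_y[V]=\es$: for such a $V$ the chosen points $\phi(x)(V)\in h_x[V]$ and $\phi(y)(V)\in h_y[V]$ lie in disjoint sets, hence $\phi(x)(V)\neq\phi(y)(V)$ and $\phi(x)\neq\phi(y)$. Writing $g=h_y^{\leftarrow}\circ h_x$, and applying the bijection $h_y^{\leftarrow}$, the condition $h_x[V]\meet h_y[V]=\es$ is equivalent to $g[V]\meet V=\es$. Here $g$ is a homeomorphism with $g(p)=h_y^{\leftarrow}(x)=:q$, and $q\neq p$ because $h_y^{\leftarrow}$ is injective while $x\neq y=h_y(p)$.

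Thus the whole argument reduces to the following claim, which I expect to be the crux (though in the end a short one): given a homeomorphism $g$ with $g(p)=q\neq p$, there is $V\in\scr{V}$ with $g[V]\meet V=\es$. This is exactly where all three hypotheses enter. Using that $X$ is Hausdorff, I would choose disjoint open sets $O_p\ni p$ and $O_q\ni q$; by continuity of $g$ the set $O_p\meet g^{\leftarrow}[O_q]$ is then an open neighborhood of $p$, so the defining property of the local $\pi$-base $\scr{V}$ yields some $V\in\scr{V}$ with $V\sse O_p\meet g^{\leftarrow}[O_q]$. Then $V\sse O_p$ and $g[V]\sse O_q$, and since $O_p\meet O_q=\es$ we get $g[V]\meet V=\es$, as needed. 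Tracing the reduction backwards, this $V$ witnesses $h_x[V]\meet h_y[V]=\es$, so $\phi$ is one-to-one and $|X|\leq\density{X}^{\pichar{X}}$.

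It is worth noting where the gain over earlier bounds comes from. Recording only a \emph{single} point $\phi(x)(V)\in D\meet h_x[V]$, rather than the whole regular open set $h_x[B]$ as in Theorem~\ref{ismailq}, is what lowers the base from $|RO(X)|$ to $\density{X}$; and homogeneity, through the homeomorphisms $h_x$, is precisely what makes the point-selection argument symmetric enough to close using a $\pi$-base at the single point $p$, which is what permits the exponent $\pichar{X}$. This is the sense in which the result is the homogeneous analogue of $|X|\leq\density{X}^{\chi(X)}$ for Hausdorff spaces, with $\chi(X)$ improved to $\pichar{X}$.
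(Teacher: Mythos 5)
Your proposal is correct and is essentially the paper's own proof: the same injection $\phi:X\to D^{\scr{B}}$ obtained by selecting a point of $D$ inside each $h_x[B]$, with injectivity established by a Hausdorff separation pulled back through the homeomorphisms to a member of the local $\pi$-base at $p$. Your reformulation via $g=h_y^{\leftarrow}\circ h_x$ (separating $p$ from $h_y^{\leftarrow}(x)$ instead of separating $x$ from $y$ directly, as the paper does) is just the same argument conjugated by $h_y$, not a genuinely different route.
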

\begin{proof}
Fix a point $p\in X$ and a local $\pi$-base $\scr{B}$ at $p$ consisting of non-empty open sets such that $|\scr{B}|\leq\pichar{X}$. Let $D$ be a dense subset of $X$ such that $|D|=d(X)$. For all $x\in X$ let $h_x:X\to X$ be a homeomorphism such that $h_x(p)=x$. For all $x\in X$ and $B\in\scr{B}$, $h_x[B]$ is a non-empty open set and thus there exists $d(x,B)\in h_x[B]\meet D$. Define $\phi:X\to D^{\scr{B}}$ by $\phi(x)(B)=d(x,B)$. 

We show $\phi$ is one-to-one. Let $x\neq y\in X$. Separate $x$ and $y$ by disjoint open sets $U$ and $V$, respectively.Then $p\in h_x^{\leftarrow}[U]\meet h_y^{\leftarrow}[V]$, an open set. There exists $B\in\scr{B}$ such that $B\sse h_x^{\leftarrow}[U]\meet h_y^{\leftarrow}[V]$. It follows that $\phi(x)(B)=d(x,B)\in h_x[B]\sse U$ and $\phi(y)(B)=d(y,B)\in h_y[B]\sse V$. Thus $\phi(x)(B)\neq\phi(y)(B)$ and $\phi(x)\neq\phi(y)$. This shows $\phi$ is one-to-one and $|X|\leq |D|^{|\scr{B}|}\leq d(X)^{\pichar{X}}$.
\end{proof}

It was shown in \cite{car07} that the semiregularization $X_s$ of a space $X$ is homogeneous if $X$ is homogeneous. (See, for example, \cite{por88} for a thorough discussion of the semiregularlization of a space). Using results in \cite{car07}, Theorem~\ref{density}, and the fact that $d(X_s)\leq RO(X)$, we have $|X| = |X_s|\leq d(x_s)^{\pi\chi(X_s)}\leq RO(X)^{\pi\chi(X)}$. Therefore Ismail's result~\ref{ismail} above follows from~\ref{density}. However, Theorem~\ref{ismailq} and Theorem~\ref{density} appear to be incomparable.

Theorem~\ref{density} has a minor but interesting improvement by replacing $\pi\chi(X)$ with a smaller cardinal function the author will call $\pi n\chi(X)$. We define $\pi n\chi(X)$ as follows. For a point $x$ in a space $X$, we define a \emph{local $\pi$-network} at $x$ to be a collection $\scr{N}$ of sets (not necessarily open) such that if $x\in U$ and $U$ is open, then there exists $N\in\scr{N}$ such that $N\in U$. Denote by $\pi n\chi(x,X)$ the least infinite cardinal $\kappa$ such that $x$ has a local $\pi$-network $\scr{N}$ of cardinality $\kappa$ and $\chi(N,X)\leq\kappa$ for all $N\in\scr{N}$. Define $\pi n\chi(X)=\sup\{\pi n\chi(x,X):x\in X\}$. Observe that $\pi n\chi(X)\leq\pi\chi(X)$. 

The following result appears to be new in the literature. The proof also involves the construction of a one-to-one map, however there is another ``layer'' in this construction above and beyond what is done in the proof of Theorem~\ref{density}. 

\begin{theorem}\label{densityn}
If $X$ is homogeneous and Hausdorff then $|X|\leq d(X)^{\pi n\chi(X)}$.
\end{theorem}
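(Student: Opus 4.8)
The plan is to mimic the one-to-one map construction in the proof of Theorem~\ref{density}, but to absorb the fact that the elements of a local $\pi$-network need not be open by inserting an extra layer into the domain of the map. Fix $p\in X$ and choose a local $\pi$-network $\scr{N}$ at $p$ realizing $\pi n\chi(X)$, so $|\scr{N}|\leq\pi n\chi(X)$ and $\chi(N,X)\leq\pi n\chi(X)$ for every $N\in\scr{N}$. For each $N\in\scr{N}$ fix a point $x_N\in N$ together with a local base $\scr{U}_N$ at $x_N$ of size at most $\pi n\chi(X)$. As before, let $D$ be dense with $|D|=d(X)$ and for each $x\in X$ fix a homeomorphism $h_x$ with $h_x(p)=x$. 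For each $x\in X$, each $N\in\scr{N}$, and each $W\in\scr{U}_N$, the image $h_x[W]$ is a nonempty open set, so we may select a point $d(x,N,W)\in h_x[W]\meet D$. Then I would define $\phi:X\to D^{\scr{N}\times\bigcup_N\scr{U}_N}$ (equivalently, a function on the index set of triples $(N,W)$ with $W\in\scr{U}_N$) by $\phi(x)(N,W)=d(x,N,W)$, and the cardinality of this index set is at most $\pi n\chi(X)$.

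The verification that $\phi$ is one-to-one is where the extra layer earns its keep. Given $x\neq y$, separate them by disjoint open sets $U\ni x$ and $V\ni y$. Then $h_x^{\leftarrow}[U]\meet h_y^{\leftarrow}[V]$ is an open set containing $p$, so by the defining property of the local $\pi$-network there is an $N\in\scr{N}$ with $N\subseteq h_x^{\leftarrow}[U]\meet h_y^{\leftarrow}[V]$. (Here I read the condition ``$N\in U$'' in the definition as ``$N\subseteq U$'', which is the only sensible reading making $\scr{N}$ a genuine $\pi$-network.) Now unlike in Theorem~\ref{density}, $N$ is not open, so I cannot directly find a dense point inside $h_x[N]$ that is forced into $U$; instead I use the distinguished point $x_N\in N\subseteq h_x^{\leftarrow}[U]\meet h_y^{\leftarrow}[V]$ and its local base $\scr{U}_N$. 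Since $x_N\in h_x^{\leftarrow}[U]\meet h_y^{\leftarrow}[V]$, there is some $W\in\scr{U}_N$ with $W\subseteq h_x^{\leftarrow}[U]\meet h_y^{\leftarrow}[V]$, and then $h_x[W]\subseteq U$ while $h_y[W]\subseteq V$. Consequently $\phi(x)(N,W)\in h_x[W]\subseteq U$ and $\phi(y)(N,W)\in h_y[W]\subseteq V$, and since $U\meet V=\es$ these are distinct, so $\phi(x)\neq\phi(y)$.

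Finishing the count, injectivity of $\phi$ gives $|X|\leq|D|^{|\scr{N}\times\bigcup_N\scr{U}_N|}\leq d(X)^{\pi n\chi(X)}$, using that the index set of pairs $(N,W)$ has cardinality at most $\pi n\chi(X)\cdot\pi n\chi(X)=\pi n\chi(X)$.

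I expect the main obstacle to be purely conceptual rather than computational: the network sets are not open, so the single-layer argument of Theorem~\ref{density}, which relied on $h_x[B]$ being open to harvest a dense point trapped inside $U$, breaks down. The resolution is precisely the second layer built from the local bases $\scr{U}_N$, which is available exactly because the definition of $\pi n\chi$ requires $\chi(N,X)\leq\pi n\chi(X)$ for each network element $N$; this is what keeps the index set small while restoring an open set $W$ on which the trapping argument can run. A secondary point to handle carefully is the interpretation of the network condition and the bookkeeping that $|\bigcup_N\scr{U}_N|\leq\pi n\chi(X)$, since we take a union over at most $\pi n\chi(X)$ many families each of size at most $\pi n\chi(X)$; the Hausdorff hypothesis enters, as in Theorem~\ref{density}, only through the initial separation of $x$ and $y$.
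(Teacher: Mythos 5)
Your overall architecture --- the two-layer injection into powers of a dense set, built from the homeomorphisms $h_x$ and verified by trapping dense points inside disjoint open sets --- is the same as the paper's, and your reading of the network condition ``$N\in U$'' as ``$N\sse U$'' is the intended one. The gap is in how you build the second layer: you ``fix a point $x_N\in N$ together with a local base $\scr{U}_N$ at $x_N$ of size at most $\pi n\chi(X)$,'' but nothing in the definition of $\pi n\chi(X)$ provides such a point. The definition bounds $\chi(N,X)$, the character of the \emph{set} $N$ in $X$: the least cardinality of a family of open sets containing $N$ such that every open set containing $N$ contains a member of the family (this is the same set-character notation used in the definition of $pct(X)$ and in Theorem~\ref{variation}). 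Small set-character does not yield a point of small character inside $N$; for instance, if $N$ is open then $\{N\}$ is already a neighborhood base at the set $N$, so $\chi(N,X)=\omega$, while every point of $N$ can have arbitrarily large character. Moreover, the gap is fatal rather than technical: in a homogeneous space all points have the same character, so the existence of even one $x_N$ with $\chi(x_N,X)\leq\pi n\chi(X)$ forces $\chi(X)\leq\pi n\chi(X)$, hence $\chi(X)=\pi n\chi(X)$ (since $\pi n\chi(X)\leq\pi\chi(X)\leq\chi(X)$ always). In that case the conclusion is already the classical Hausdorff bound $|X|\leq d(X)^{\chi(X)}$, which needs no homogeneity and no $\pi$-networks. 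So your argument succeeds only in that degenerate case and cannot reach the cases $\pi n\chi(X)<\chi(X)$ for which the theorem was introduced.

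The repair is to use exactly what $\chi(N,X)\leq\kappa$ gives: for each $N\in\scr{N}$ fix a neighborhood base $\{U(N,\alpha):\alpha<\kappa\}$ at the set $N$, meaning each $U(N,\alpha)$ is open, contains $N$, and every open set containing $N$ contains some $U(N,\alpha)$. Your separation argument produces $N\sse h_x^{\leftarrow}[U]\meet h_y^{\leftarrow}[V]$, and the right-hand side is an open set containing $N$, so some $U(N,\alpha)\sse h_x^{\leftarrow}[U]\meet h_y^{\leftarrow}[V]$; then $h_x[U(N,\alpha)]\sse U$ and $h_y[U(N,\alpha)]\sse V$, and the chosen dense points $d(x,N,\alpha)\in h_x[U(N,\alpha)]\meet D$ and $d(y,N,\alpha)\in h_y[U(N,\alpha)]\meet D$ are distinct. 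Indexing by the pairs $(N,\alpha)$, of which there are at most $\kappa\cdot\kappa=\kappa$, the rest of your counting goes through verbatim and yields $|X|\leq d(X)^{\pi n\chi(X)}$. This corrected version is precisely the paper's proof.
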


\begin{proof}
Let $\kappa = \pi n\chi(X)$. As in the proof of Theorem~\ref{density}, fix $p\in X$ and for every $x\in X$ fix a homeomorphism $h_x:X\to X$ such that $h_x(p)=x$. There exists a local $\pi$-network $\scr{N}$ at $p$ such that $|\scr{N}|\leq\kappa$ and $\chi(N,X)\leq\kappa$ for all $N\in\scr{N}$. Let $D$ be dense in $X$ such that $|D|=d(X)$. Let $\{U(N,\alpha):\alpha<\kappa\}$ be a neighborhood base at $N$ for each $N\in\scr{N}$. 

As $D$ is dense, for all $x\in X$, for all $N\in\scr{N}$, and for all $\alpha<\kappa$, there exists a point $d(x,N,\alpha)\in h_x[U(N,\alpha)]\meet D$. We define a function $\phi: X\to (D^\kappa)^\scr{N}$ by $\phi(x)(N)(\alpha)=d(x,N,\alpha)$ and show $\phi$ is one-to-one. Let $x\neq y\in X$ and separate $x$ and $y$ by disjoint open sets $U$ and $V$, respectively. Then $p\in h_x^\leftarrow[U]\meet h_y^\leftarrow[V]$. As $\scr{N}$ is a local $\pi$-network for $p$, there exists $N\in\scr{N}$ such that $N\sse h_x^\leftarrow[U]\meet h_y^\leftarrow[V]$. As $\{U(N,\alpha):\alpha<\kappa\}$ is a neighborhood base at $N$, there exists $\alpha<\kappa$ such that $U(N,\alpha)\sse h_x^\leftarrow[U]\meet h_y^\leftarrow[V]$. Thus, $h_x[U(N,\alpha)]\sse U$ and $h_y[U(N,\alpha)]\sse V$, showing $d(x,N,\alpha)\neq d(y,N,\alpha)$. It follows that $\phi(x)(N)(\alpha)\neq\phi(y)(N)(\alpha)$, $\phi(x)(N)\neq\phi(y)(N)$, and finally that $\phi(x)\neq \phi(y)$. This shows $\phi$ is one-to-one, and 
$$|X|\leq\left|(D^\kappa)^\scr{N}\right|\leq (|D|^\kappa)^\kappa=|D|^\kappa\leq d(X)^{\pi n\chi(X)}.$$
\end{proof}

We turn now to the setting in which a space $X$ is power homogeneous and not necessarily homogeneous. In the full power homogeneous setting, van Douwen's Theorem~\ref{VD} has been improved in variety of ways using differing techniques. In the study of power homogeneous spaces $X$, information on the homogeneity of $X^\kappa$ for a cardinal $\kappa$, and the autohomeomorphisms on that space, must be utilized in some way at the level of the space $X$. This information must be captured in such a way as to generate inequalities involving cardinal functions on $X$. The projection maps $\pi:X^\kappa\to X$ are typically, and necessarily, used in this process. In 2006, Ridderbos~\cite{Rid06} used new techniques involving projection maps to give the first improvement to \ref{VD} in the full power homogeneous setting. 

\begin{theorem}[Ridderbos \cite{Rid06}, 2006]\label{ridHausdorff}
If $X$ is a power homogeneous Hausdorff space, then $|X|\leq d(X)^{\pi\chi(X)}$.
\end{theorem}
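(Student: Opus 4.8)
The plan is to imitate the proof of Theorem~\ref{density}, but to run the one-to-one map argument one level up, inside a homogeneous power of $X$, and then to push the resulting data back down to $X$ through a projection. First I would invoke power homogeneity to fix an infinite cardinal $\kappa$ with $X^\kappa$ homogeneous; any homogeneous power may be enlarged, since a power of a homogeneous space is again homogeneous (act coordinatewise), so $\kappa$ may be taken infinite. Fix $p\in X$ and let $\bar p\in X^\kappa$ be the constant point with value $p$. For each $x\in X$ let $\iota(x)\in X^\kappa$ agree with $\bar p$ off coordinate $0$ and take the value $x$ at coordinate $0$, and use homogeneity of $X^\kappa$ to choose a homeomorphism $h_x:X^\kappa\to X^\kappa$ with $h_x(\bar p)=\iota(x)$.

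The observation that allows the density of $X$ itself to appear, rather than $\density{X^\kappa}$, is that the projection $\pi_0:X^\kappa\to X$ is open, so $\pi_0\circ h_x$ is a continuous open surjection sending $\bar p$ to $x$. I would then fix a dense $D\sse X$ with $|D|=\density{X}$ and a local $\pi$-base $\scr{B}$ at $\bar p$ in $X^\kappa$. For $B\in\scr{B}$ the set $\pi_0[h_x[B]]$ is open and non-empty, so I can select $d(x,B)\in\pi_0[h_x[B]]\meet D$ and define $\phi:X\to D^{\scr{B}}$ by $\phi(x)(B)=d(x,B)$. To see $\phi$ is one-to-one, given $x\neq y$ separate them by disjoint open $U\ni x$ and $V\ni y$; then $\bar p\in (\pi_0\circ h_x)^{\leftarrow}[U]\meet(\pi_0\circ h_y)^{\leftarrow}[V]$, an open neighborhood of $\bar p$, so some $B\in\scr{B}$ lies inside it. This forces $\pi_0[h_x[B]]\sse U$ and $\pi_0[h_y[B]]\sse V$, so $d(x,B)\neq d(y,B)$ and $\phi(x)\neq\phi(y)$. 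This yields $|X|\leq\density{X}^{|\scr{B}|}$.

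The main obstacle is that $\scr{B}$ is a local $\pi$-base at $\bar p$ in the product, and $\picharp{X^\kappa}{\bar p}=\kappa\cdot\pichar{X}$, which is in general strictly larger than $\pichar{X}$; the naive argument delivers only $\density{X}^{\kappa\cdot\pichar{X}}$, and since one cannot in general shrink a homogeneous power, $\kappa$ cannot simply be assumed small. The heart of the proof is therefore to confine the coordinates that actually matter to a set of size at most $\pichar{X}$. Each separation above uses only a basic neighborhood of $\bar p$ of finite support, so the genuine task is to arrange that, across all pairs $(x,y)$ to be separated, these finite supports can be relabeled into one fixed index set of size $\pichar{X}$. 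I would attempt this by pre-composing each $h_x$ with a coordinate permutation $\tau_x$ of $X^\kappa$ (which fixes $\bar p$, so $h_x\circ\tau_x$ still sends $\bar p$ to $\iota(x)$) chosen to drive the supports governing continuity of $\pi_0\circ h_x$ at $\bar p$ into a fixed coordinate set $S$ with $|S|\leq\pichar{X}$; one could then replace $\scr{B}$ by the local $\pi$-base at $\bar p$ consisting of sets supported in $S$, of cardinality $\omega\cdot\pichar{X}=\pichar{X}$. The delicate point is that a single choice of $\tau_x$ must serve every neighborhood simultaneously, so that one fixed $h_x$ works against all $y$ at once; controlling the total support of $\pi_0\circ h_x$ by $\pichar{X}$ (presumably via such a relabeling together with an elementary-submodel or $\Delta$-system bookkeeping) is exactly the extra layer absent from the homogeneous case of Theorem~\ref{density}. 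Once the index set is cut to size $\pichar{X}$, the displayed inequality becomes $|X|\leq\density{X}^{\pichar{X}}$, as required.
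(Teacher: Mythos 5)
Your overall strategy --- running the argument of Theorem~\ref{density} inside a homogeneous power $X^\kappa$ and projecting down through the open map $\pi_0$ --- is the right one, and you have correctly located the obstruction: for Hausdorff $X$ with more than one point, $\pi\chi(\bar p,X^\kappa)=\kappa\cdot\pi\chi(X)$, so the naive argument yields only $|X|\leq d(X)^{\kappa\cdot\pi\chi(X)}$, and $\kappa$ cannot be taken small. But your proposal stops exactly at that point: the step that would close the gap is asserted (``presumably via such a relabeling together with an elementary-submodel or $\Delta$-system bookkeeping''), not proved, and that step \emph{is} the content of Ridderbos's theorem. Note that the survey itself does not reprove this result; it cites \cite{Rid06} and explicitly remarks that the critical ingredient is Corollary 3.3 of that paper, which produces homeomorphisms of $X^\kappa$ with additional properties relating local $\pi$-bases in $X^\kappa$ to local $\pi$-bases in $X$ --- precisely the lemma your sketch presupposes but does not supply.

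Moreover, the specific repair you suggest cannot work as stated. A coordinate permutation $\tau_x$ is a bijection of $\kappa$, so it can move ``the supports governing continuity of $\pi_0\circ h_x$ at $\bar p$'' into a fixed set $S$ with $|S|\leq\pi\chi(X)$ only if the union of those supports already has cardinality at most $\pi\chi(X)$; a bijection cannot compress a large set into a small one. There is no a priori bound of that kind: in the injectivity argument the neighborhood $U$ of $x$ varies with the opposing point $y$ (it must be chosen disjoint from a neighborhood of $y$), so a fixed $x$ must be handled against up to $|X|$ many neighborhoods, each contributing a finite support through the continuity of $\pi_0\circ h_x$ at $\bar p$, and the union of these supports may have size far exceeding $\pi\chi(X)$. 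Proving that the homeomorphisms can be chosen so that all of this dependence is concentrated on at most $\pi\chi(X)$ many coordinates --- equivalently, that the $\pi_0$-images of a local $\pi$-base at $\bar p$ of size $\pi\chi(X)$ can be made to act like a local $\pi$-base at $x$ --- is the hard technical core of \cite{Rid06}, and it is the piece missing from your argument.
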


If $X^\kappa$ is homogeneous then, as in any homogeneous space, after fixing a point $p\in X^\kappa$, there exist homeomorphisms $h_x:X^\kappa\to X^\kappa$ such that $h_x(p)=x$ for every $x\in X^\kappa$. However, a critical ingredient in the proof of Theorem~\ref{ridHausdorff} is demonstrating the existence of such homeomophisms with additional important properties relating to local $\pi$-bases in $X^\kappa$ and $X$. This is shown in Corollary 3.3 in \cite{Rid06}.

Recently, Bella and the author extended \ref{ridHausdorff} to give a bound for the cardinality of any open set in a power homogeneous Hausdorff space. 

\begin{theorem}[Bella, C. \cite{BC}, 2018]\label{open}
Let X be a power homogeneous space. If $D\sse X$ and $U$ is an open set such that $U\sse\overline{D}$, then $|U|\leq |D|^{\pi\chi(X)}$.
\end{theorem}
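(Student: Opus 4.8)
The plan is to generalize the one-to-one map in the proof of Theorem~\ref{density} (and its power homogeneous refinement, Theorem~\ref{ridHausdorff}), the only genuinely new ingredient being a way to force the relevant open sets to meet $D$ even though $D$ need not be dense in all of $X$. The guiding observation is elementary: if $O$ is a nonempty open set with $O\sse\cl{D}$, then, picking any $z\in O$, the set $O$ is an open neighborhood of the point $z\in\cl{D}$, so $O\meet D\neq\es$. Thus it suffices to arrange that the open sets produced by the homeomorphisms land inside $U$ (hence inside $\cl{D}$), and this can be forced by carrying out the Hausdorff separation of points \emph{inside} the open set $U$.

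I would first record the mechanism in the homogeneous case, which already exhibits the new idea. Fix $p\in X$, a local $\pi$-base $\scr{B}$ at $p$ with $|\scr{B}|\leq\pichar{X}$, and for each $x\in U$ a homeomorphism $h_x:X\to X$ with $h_x(p)=x$. Define $\phi:U\to D^{\scr{B}}$ by letting $\phi(x)(B)$ be a point of $h_x[B]\meet D$ whenever this set is nonempty, and a fixed point $d_0\in D$ otherwise. To see $\phi$ is one-to-one, take $x\neq y$ in $U$ and, using that $U$ is open, separate $x$ and $y$ by disjoint open sets $V_x,V_y\sse U$. Then $p\in h_x^{\leftarrow}[V_x]\meet h_y^{\leftarrow}[V_y]$, so there is $B\in\scr{B}$ with $B\sse h_x^{\leftarrow}[V_x]\meet h_y^{\leftarrow}[V_y]$; hence $h_x[B]\sse V_x\sse U$ and $h_y[B]\sse V_y\sse U$. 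By the observation above both $h_x[B]\meet D$ and $h_y[B]\meet D$ are nonempty, so $\phi(x)(B)\in V_x$ and $\phi(y)(B)\in V_y$ are distinct. This gives $|U|\leq|D|^{\pichar{X}}$ in the homogeneous case.

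For the full power homogeneous statement I would run the same argument one level up, in a homogeneous power $Y=X^{\kappa}$, using the projection $\pi:Y\to X$ onto the first coordinate, which is open, continuous and onto. The relevant fact is that $\pi[h[W]]$ is a nonempty open subset of $X$ whenever $W$ is a nonempty open subset of $Y$ and $h$ is an autohomeomorphism of $Y$, so the ``lands in $U$, hence meets $D$'' reasoning applies verbatim to projected images. Concretely, I would invoke Corollary~3.3 of \cite{Rid06} to obtain a point $p$, a fixed point $\mathbf{p}\in Y$ over $p$, homeomorphisms $h_x:Y\to Y$ taking $\mathbf{p}$ to a point over $x$, together with a family indexed by a set of size at most $\pichar{X}$ that plays the role of the local $\pi$-base at $\mathbf{p}$. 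Separating $x\neq y$ in $U$ by disjoint open $V_x,V_y\sse U$ and pulling back along $\pi\circ h_x$ and $\pi\circ h_y$, I would select an index whose associated set $W$ satisfies $\pi[h_x[W]]\sse V_x$ and $\pi[h_y[W]]\sse V_y$; as before both projected images lie in $U\sse\cl{D}$ and so meet $D$, yielding distinct coordinates of an injection $U\to D^{\pichar{X}}$.

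The main obstacle is precisely the control of the index set in the power homogeneous case. A genuine local $\pi$-base at $\mathbf{p}$ in $Y=X^{\kappa}$ need not have size $\pichar{X}$; indeed $\pichar{X^{\kappa}}$ can be strictly larger than $\pichar{X}$ (for instance $\pi\chi(2^{\omega_1})=\omega_1$), so merely pulling back a local $\pi$-base of $\mathbf{p}$ in $Y$ would only give the weaker bound $|D|^{\pichar{X^{\kappa}}}$. Keeping the exponent at $\pichar{X}$ is exactly what the clustering and selection techniques behind Corollary~3.3 of \cite{Rid06} accomplish, by tying the homeomorphisms $h_x$ to a single local $\pi$-base of $p$ in $X$ rather than to the honest $\pi$-character of the power; this is the one step I expect to be delicate, and I would lean on \cite{Rid06} for it. The Hausdorff hypothesis enters only in the separation step, and the improvement over Theorem~\ref{ridHausdorff} is recovered by taking $U=X$ and $D$ dense with $|D|=d(X)$.
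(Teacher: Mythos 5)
The survey itself contains no proof of Theorem~\ref{open}: it is quoted from \cite{BC}, and the only guidance the paper gives is the proof of Theorem~\ref{density} together with the remark that Theorem~\ref{ridHausdorff} rests on Corollary~3.3 of \cite{Rid06}. Your proposal is correct and follows exactly that route: the homogeneous-case argument (separate $x\neq y$ by disjoint open sets \emph{inside} $U$, so that $h_x[B]$ and $h_y[B]$ land in $U\sse\cl{D}$ and therefore meet $D$) is a complete and sound adaptation of the proof of Theorem~\ref{density}, and this observation is indeed the only genuinely new ingredient, with the Hausdorff hypothesis (stated in the paper's lead-in though omitted from the theorem statement) entering only where you say it does. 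Your handling of the power homogeneous case---deferring to Corollary~3.3 of \cite{Rid06} for homeomorphisms of $X^\kappa$ tied to a local $\pi$-base of $p$ in $X$, and correctly flagging that $\pi\chi(X^\kappa)$ may exceed $\pi\chi(X)$---is the same move the cited proof makes; the one point to state carefully when invoking that machinery is that you need the \emph{pair} version of its conclusion (a single index $W$ with $\pi[h_x[W]]\sse V_x$ and $\pi[h_y[W]]\sse V_y$ simultaneously), since injectivity does not follow formally from each family $\{\pi[h_x[W]]\}_W$ merely being a local $\pi$-base at $x$; this pair version is what the construction behind Corollary~3.3 supplies and what the proofs of Theorems~\ref{ridHausdorff} and~\ref{ERPH} already require.
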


Recall that a subset $D$ of a space $X$ is $\theta$-dense in $X$ if $\overline{U}\meet D\neq\es$ for every non-empty open set $U$ of $X$. The $\theta$-\emph{density} of $X$ is defined by $d_\theta(X)=\textup{min}\{|D|:D\textup{ is }\theta\textup{-dense in }X\}$. A variation of the proof of \ref{ridHausdorff} in the Urysohn setting was given by the author in \cite{car07}.

\begin{theorem}[C. \cite{car07}, 2007]\label{Ury}
If $X$ is power homogeneous and Urysohn then $|X|\leq d_\theta(X)^{\pichar{X}}$.
\end{theorem}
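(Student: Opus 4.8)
The plan is to reproduce the proof of Theorem~\ref{ridHausdorff} almost verbatim, modifying it in exactly two places to accommodate the Urysohn hypothesis and the invariant $d_\theta(X)$ in place of $d(X)$. All of the genuinely power-homogeneous apparatus is inherited without change: fixing a cardinal $\kappa$ with $X^\kappa$ homogeneous and a point $p\in X$, one obtains for each $x\in X$ a homeomorphism $h_x$ and a local $\pi$-base $\scr{B}$ at $p$ of cardinality at most $\pi\chi(X)$, with the control relating local $\pi$-bases of $X^\kappa$ to those of $X$ furnished by Corollary~3.3 of \cite{Rid06}. I would set all of this up exactly as in the proof of \ref{ridHausdorff}; the two modifications occur only in the choice of the witnessing points and in the separation step that establishes injectivity.

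First, let $D$ be $\theta$-dense in $X$ with $|D|=d_\theta(X)$. Where the proof of \ref{ridHausdorff} selects, for each $x\in X$ and $B\in\scr{B}$, a point $d(x,B)$ of $D$ inside the nonempty open set of $X$ associated to the pair $(x,B)$, I would instead select $d(x,B)\in D$ inside the \emph{closure} of that open set; this is exactly what $\theta$-density permits, since $\overline{W}\meet D\neq\es$ for every nonempty open $W$. Setting $\phi(x)(B)=d(x,B)$ defines $\phi:X\to D^{\scr{B}}$, and once $\phi$ is shown to be one-to-one the arithmetic $|X|\leq|D|^{|\scr{B}|}\leq d_\theta(X)^{\pi\chi(X)}$ finishes the proof.

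The injectivity step is where the Urysohn hypothesis enters, and it is the part I expect to require the most care. Given $x\neq y$, I would separate them not merely by disjoint open sets but, invoking the Urysohn property, by open sets $U\ni x$ and $V\ni y$ with $\overline{U}\meet\overline{V}=\es$. Carrying out the selection of a single $B\in\scr{B}$ as in \ref{ridHausdorff} places the open set associated to $(x,B)$ inside $U$ and the one associated to $(y,B)$ inside $V$; passing to closures (which is monotone) puts these inside the disjoint sets $\overline{U}$ and $\overline{V}$. Consequently $d(x,B)\in\overline{U}$ and $d(y,B)\in\overline{V}$ are distinct, so $\phi(x)(B)\neq\phi(y)(B)$. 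The essential observation --- and the reason plain Hausdorffness does not suffice here --- is that a $\theta$-dense set meets only the closures of open sets, so the two witnesses can be forced apart only by separating those closures, which is precisely the content of the Urysohn axiom. The main obstacle is thus not any new cardinal computation but verifying that the inclusions established in \ref{ridHausdorff} at the level of open sets survive the passage to closures and still yield a single index $B$ controlling both $h_x$ and $h_y$; granting the machinery of Corollary~3.3 of \cite{Rid06}, this is routine.
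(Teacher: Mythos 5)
Your proposal is correct and is essentially the proof the paper points to: the paper gives no argument for Theorem~\ref{Ury} beyond noting it is ``a variation of the proof of Theorem~\ref{ridHausdorff} in the Urysohn setting'' from \cite{car07}, and your two modifications --- choosing the witnesses from a $\theta$-dense set inside the closures of the associated open sets, and separating $x\neq y$ by open sets with disjoint closures so that those witnesses are forced apart --- are exactly that variation. The argument goes through because the machinery of Corollary~3.3 of \cite{Rid06} supplies a single index $B$ for any pair of open neighborhoods of $x$ and $y$, in particular for a Urysohn-separated pair, after which passing to closures is monotone and harmless.
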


%%%%%%%%%%%%%%%%%%%%%%%%%%%%%%%%%%%%%%%%%%%%%%%%%%
\section{The cardinality bound $2^{c(X)\pi\chi(X)}$}

Van Douwen's Theorem~\ref{VD} also has an improvement in a different direction. Theorem~\ref{density}, coupled with the fact that $d(X)\leq\pi\chi(X)^{c(X)}$ for any regular space $X$ (\sapirovskii~\cite{Sap1974}), shows that $|X|\leq 2^{c(X)\pi\chi(X)}$ for regular homogeneous spaces $X$. This was observed by \arhangelskii~in \cite{arh87}. In~\cite{car07}, these results we modified to show $|X|\leq 2^{c(X)\pi\chi(X)}$ for Urysohn homogeneous spaces $X$. This follows from Theorem~\ref{Ury} and the fact that $d_\theta(X)\leq \pi\chi(X)^{c(X)}$ for any space $X$ \cite{car07}.

As $c(X)\pi\chi(X)\leq \pi w(X)$ for any space, we see that $2^{c(X)\pi\chi(X)}$ is an improved bound over $2^{\pi w(X)}$. It is a real improvement, even in the compact case, as the compact right topological group $X$, constructed under CH by Kunen~\cite{Kunen}, satisfies $c(X)\pi\chi(X) = \omega$ and $\pi w(X) = \omega_1$. 

The question remained open whether this bound was valid in the Hausdorff case. Using entirely different techniques, Ridderbos and the author answered this in the affirmative in~\cite{CR08}. While relatively simple, it represented the first use of the Erd\"os-Rado Theorem in the proof of a cardinal inequality involving homogeneous spaces. It is related to the proof of the Hajnal-\juhasz~theorem $|X|\leq 2^{c(X)\chi(X)}$ for general Hausdorff spaces that uses the Erd\"os-Rado theorem (see~\cite[2.15b]{Juhasz}). Indeed, one may view this result as the homogeneous analogue of the Hajnal-\juhasz~theorem. We give this proof below as our next fundamental proof.

\begin{theorem}[C., Ridderbos \cite{CR08}, 2008]\label{ER}
If $X$ is homogeneous and Hausdorff then $|X|\leq 2^{c(X)\pichar{X}}$.
\end{theorem}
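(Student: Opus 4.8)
The plan is to mirror the classical Hajnal--\juhasz~argument for $|X|\leq 2^{c(X)\chi(X)}$, using the homeomorphisms $h_x$ to compensate for the absence of a uniform character bound. Set $\kappa=c(X)\pichar{X}$. Fix $p\in X$ and a local $\pi$-base $\scr{B}$ at $p$ with $|\scr{B}|\leq\pichar{X}$, and for each $x\in X$ fix a homeomorphism $h_x:X\to X$ with $h_x(p)=x$. The idea is to build, by transfinite recursion of length $\kappa^+$, an increasing sequence of subsets $\langle A_\alpha:\alpha<\kappa^+\rangle$ of $X$ together with, at each stage, a witnessing family of open sets that records how the points of $A_\alpha$ are separated from one another. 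Because $X$ is Hausdorff, for each pair $x\neq y$ already chosen we can select $B\in\scr{B}$ with $h_x[B]$ and $h_y[B]$ contained in disjoint open sets (exactly as in the proof of Theorem~\ref{density}); the resulting sets $h_x[B]$ play the role that basic open neighborhoods play in the Hajnal--\juhasz~proof. The closing-off construction ensures $|A_\alpha|\leq 2^\kappa$ for each $\alpha$ and that $A=\Un_{\alpha<\kappa^+}A_\alpha$ is, in a suitable sense, closed under the separation witnesses.

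Next I would argue that $A$ is already all of $X$, or more precisely that $|X|\leq 2^\kappa$, by deriving a contradiction from the assumption $|X|>2^\kappa$. The mechanism is the Erd\"os--Rado theorem $(2^\kappa)^+\to(\kappa^+)^2_\kappa$. First I would attach to each ordered pair $(x,y)$ of distinct points a ``color'' drawn from a set of size at most $\kappa$: using the disjoint open sets separating $h_x$ and $h_y$ via some $B\in\scr{B}$, and using the fact that cellularity is $\leq\kappa$, one encodes the separation data into $\kappa$-many possibilities. Assuming $|X|\geq(2^\kappa)^+$, Erd\"os--Rado yields a homogeneous set $H$ of size $\kappa^+$ on which the coloring is constant. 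The punchline is that a constant color on a set of this size forces a cellular family of size $\kappa^+$: the sets $\{h_x[B]:x\in H\}$ (for the fixed $B$ determined by the common color) must be pairwise disjoint, or at least contain a pairwise-disjoint subfamily of size $\kappa^+$, contradicting $c(X)\leq\kappa$.

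The main obstacle, and the step requiring the most care, is the design of the coloring so that homogeneity of the color genuinely produces a \emph{disjoint} family rather than merely a separated one. In the non-homogeneous Hajnal--\juhasz~proof one colors pairs by which basic neighborhood separates them, but here the neighborhoods are the translated sets $h_x[B]$, which depend on the moving point $x$; the subtlety is that the same $B\in\scr{B}$ must be pinned down across the whole homogeneous set, and one must verify that $h_x[B]\meet h_y[B]=\es$ (not merely that $x,y$ are separated) for $x,y\in H$. The key observation making this work is that if $h_x[B]$ is contained in an open set $U$ with $y\notin\cl{U}$, and symmetrically, then for a well-chosen coloring the translates become genuinely pairwise disjoint on $H$. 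Once the coloring is calibrated so that constancy delivers this disjointness, the cellularity bound $c(X)\leq\kappa$ is immediately violated by a family of size $\kappa^+>\kappa$, completing the contradiction and yielding $|X|\leq 2^{c(X)\pichar{X}}$.
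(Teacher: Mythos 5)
Your overall strategy is the paper's: color pairs of points by elements of the local $\pi$-base $\scr{B}$ at $p$, apply the Erd\"os--Rado theorem $(2^\kappa)^+\to(\kappa^+)^2_\kappa$ to get a set $Y$ of size $\kappa^+$ on which the color is a constant $B$, and contradict $c(X)\leq\kappa$ with the family $\{h_x[B]:x\in Y\}$. But as written the proposal has two defects. The first is minor: the transfinite closing-off recursion of length $\kappa^+$ in your first paragraph is never used --- the Erd\"os--Rado argument you then run requires no sets $A_\alpha$ at all, and the paper's proof contains no such construction. The second is the real gap: you correctly flag that everything hinges on designing the coloring so that constancy forces $h_x[B]\meet h_y[B]=\es$, but you then leave exactly that step unresolved (``for a well-chosen coloring'', ``once the coloring is calibrated''), propose an unnecessary detour through a condition $y\notin\cl{U}$, and even hedge with ``or at least contain a pairwise-disjoint subfamily''. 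A proof cannot defer its crux.

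The resolution is in fact already implicit in your own first paragraph and is precisely what the paper does: given $x\neq y$, pick disjoint open sets $U\ni x$ and $V\ni y$; since $h_x(p)=x$ and $h_y(p)=y$, the set $h_x^{\leftarrow}[U]\meet h_y^{\leftarrow}[V]$ is an open neighborhood of $p$, so some $B(x,y)\in\scr{B}$ lies inside it; then $h_x[B(x,y)]\sse U$ and $h_y[B(x,y)]\sse V$, hence $h_x[B(x,y)]\meet h_y[B(x,y)]=\es$. The disjointness is thus secured for \emph{every} pair, before Erd\"os--Rado is invoked, and the color of $\{x,y\}$ is simply $B(x,y)$ itself. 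Constancy on $Y$ then gives, for all $x\neq y\in Y$, $h_x[B]\meet h_y[B]=h_x[B(x,y)]\meet h_y[B(x,y)]=\es$ --- no calibration, no closures, no further argument. Two smaller corrections: the number of colors is bounded by $|\scr{B}|\leq\pichar{X}\leq\kappa$, so cellularity plays no role in defining the coloring (contrary to your second paragraph); $c(X)$ enters only at the very end, when the pairwise disjoint family of $\kappa^+$ nonempty open sets violates $c(X)\leq\kappa$.
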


\begin{proof}
Let $\kappa=c(X)\pichar{X}$. Fix a point $p\in X$ and a local $\pi$-base $\scr{B}$ at $p$ consisting of non-empty sets such that $|\scr{B}|\leq\kappa$. For all $x\in X$ let $h_x:X\to X$ be a homeomorphism such that $h_x(p)=x$. Define $B:[X]^2\to\scr{B}$ as follows. For all $x\neq y$, there exist disjoint open sets $U(x,y)$ and $V(x,y)$ containing $x$ and $y$, respectively. For each $x\neq y\in X$ the open set $h_x^{\leftarrow}[U]\meet h_y^{\leftarrow}[V]$ contains $p$. Thus there exists $B(x,y)\in\scr{B}$ such that $B(x,y)\sse h_x^{\leftarrow}[U]\meet h_y^{\leftarrow}[V]$. Note that $h_x[B(x,y)]\meet h_y[B(x,y)]=\es$ for all $\{x,y\}\in [X]^2$.

By way of contradiction suppose that $|X|>2^\kappa$. By the Erd\"os-Rado Theorem there exists $Y\in[X]^{\kappa^+}$ and $B\in\scr{B}$ such that $B=B(x,y)$ for all $x\neq y\in Y$. For $x\neq y\in Y$ we have $h_x[B]\meet h_y[B]=h_x[B(x,y)]\meet h_y[B(x,y)]=\es$. This shows that $\scr{C}=\{h_x[B]:x\in Y\}$ is a cellular family. But $|\scr{C}|=|Y|=\kappa^+>c(X)$, a contradiction. Therefore $|X|\leq 2^\kappa$.
\end{proof}

Using Ismail's invariant $q\psi(X)$, one observes that the above theorem has a improvement in the case when the space $X$ is additionally regular. The proof is a simple matter of lining up a few results, although the result appears to be new in the literature.
\begin{theorem} \label{regular}
If $X$ is regular and homogeneous, then $|X|\leq\pi\chi(X)^{c(X)q\psi(X)}$.
\end{theorem}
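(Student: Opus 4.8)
\section*{Proof proposal for Theorem~\ref{regular}}

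The plan is to enter through Ismail's Theorem~\ref{ismailq} and then bound $|RO(X)|$ using the cellularity. Since $X$ is homogeneous, Theorem~\ref{ismailq} gives $|X|\leq|RO(X)|^{q\psi(X)}$, so it suffices to prove that $|RO(X)|\leq\pi\chi(X)^{c(X)}$ whenever $X$ is regular. Granting this, raising to the power $q\psi(X)$ yields
$$|X|\leq|RO(X)|^{q\psi(X)}\leq\left(\pi\chi(X)^{c(X)}\right)^{q\psi(X)}=\pi\chi(X)^{c(X)q\psi(X)},$$
which is exactly the asserted bound. Note that the correct input here is a bound on $|RO(X)|$ in terms of $c(X)$; the more familiar estimate $|RO(X)|\leq 2^{d(X)}$ is far too large for this purpose.

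The bound $|RO(X)|\leq\pi\chi(X)^{c(X)}$ rests on two standard facts assembled in turn. First, for an arbitrary space, $|RO(X)|\leq\pi w(X)^{c(X)}$. To see this, fix a $\pi$-base $\scr{P}\sse RO(X)$ with $|\scr{P}|=\pi w(X)$. Each $W\in RO(X)$ can be recovered as $W=int\overline{\Un\scr{A}}$ from any maximal antichain $\scr{A}\sse\scr{P}$ whose members are contained in $W$; since such an antichain is a cellular family it has size at most $c(X)$. Choosing one such $\scr{A}$ for each $W$ defines an injection of $RO(X)$ into $[\scr{P}]^{\leq c(X)}$, so $|RO(X)|\leq\pi w(X)^{c(X)}$. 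Second, the elementary inequality $\pi w(X)\leq d(X)\pi\chi(X)$ holds for every space, and \sapirovskii's theorem supplies $d(X)\leq\pi\chi(X)^{c(X)}$ in the regular case; together these give $\pi w(X)\leq\pi\chi(X)^{c(X)}$ and hence
$$|RO(X)|\leq\pi w(X)^{c(X)}\leq\left(\pi\chi(X)^{c(X)}\right)^{c(X)}=\pi\chi(X)^{c(X)}.$$

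The only step demanding genuine care is the first fact. One must verify that a maximal antichain $\scr{A}$ drawn from the regular-open $\pi$-base and lying below $W$ actually has union dense in $W$: if some nonempty open $O\sse W$ missed $\overline{\Un\scr{A}}$, a member of $\scr{P}$ inside $O$ would extend $\scr{A}$, contradicting maximality. Density of $\Un\scr{A}$ in $W$ forces $\overline{\Un\scr{A}}=\overline{W}$, so $int\overline{\Un\scr{A}}=W$ and $W$ is indeed determined by $\scr{A}$, making the assignment injective. Everything else is the routine assembly of Theorem~\ref{ismailq}, the estimate $\pi w(X)\leq d(X)\pi\chi(X)$, and \sapirovskii's inequality, precisely the ``lining up of a few results'' the preceding remark promises.
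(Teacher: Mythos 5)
Your proposal is correct and follows essentially the same route as the paper: the paper's proof is exactly the chain $|X|\leq|RO(X)|^{q\psi(X)}\leq(\pi\chi(X)^{c(X)})^{q\psi(X)}\leq\pi\chi(X)^{c(X)q\psi(X)}$, citing Juh\'asz 2.37 for $|RO(X)|\leq\pi\chi(X)^{c(X)}$ in regular spaces, whereas you reprove that cited inequality via the maximal-antichain coding together with $\pi w(X)\leq d(X)\pi\chi(X)$ and \sapirovskii's $d(X)\leq\pi\chi(X)^{c(X)}$. One blemish: your claim that $|RO(X)|\leq\pi w(X)^{c(X)}$ holds ``for an arbitrary space'' starts by fixing a $\pi$-base $\scr{P}\sse RO(X)$, which need not exist in an arbitrary space (e.g.\ when every nonempty open set is dense); however, your argument never uses that the members of $\scr{P}$ are regular open---only that the coded sets $W$ are---so dropping that requirement (or invoking regularity of $X$, which does supply such a $\pi$-base) repairs the statement without affecting the theorem.
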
 
\begin{proof} 
Since $X$ is regular and homogeneous, we have
$$|X|\leq |RO(X)|^{q\psi(X)}\leq (\pi\chi(X)^{c(X)})^{q\psi(X)}\leq\pi\chi(X)^{c(X)q\psi(X)}.$$
The first inequality above is Theorem~\ref{ismailq}, and the second inequality follows from the inequality $|RO(X)|\leq\pi\chi(X)^{c(X)}$ for regular spaces (see~\cite{Juhasz} 2.37).
\end{proof}

This is an actual improvement over the bound $2^{c(X)\pichar{X}}$ because $q\psi(X)\leq\pi\chi(X)$ for a Hausdorff space $X$. Furthermore, it improves the cardinality bound $\pi\chi(X)^{c(X)\psi(X)}$ for regular spaces $X$ given by \sapirovskii~\cite{Sap1974}, as $q\psi(X)\leq \psi(X)$ if $X$ is regular. One may view Theorem~\ref{regular} as the homogeneous analogue of \sapirovskii's result.

We turn now to the case where the space $X$ is power homogeneous. In this case, van Mill \cite{VM05} first demonstrated the bound $2^{c(X)\pi\chi(X)}$ holds under the assumption of compactness, using a variation of van Douwen's clustering techniques.

\begin{theorem}[van Mill \cite{VM05}, 2005]\label{vMcompact}
If $X$ is a power homogeneous compactum, then $|X|\leq 2^{c(X)\pi\chi(X)}$.
\end{theorem}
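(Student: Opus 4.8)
The plan is to deduce Theorem~\ref{vMcompact} from the full power homogeneous bound \ref{ridHausdorff} together with \sapirovskii's inequality $d(X)\leq\pi\chi(X)^{c(X)}$ for regular spaces, exactly paralleling the homogeneous argument recorded just after Theorem~\ref{density}. Since $X$ is a compactum it is in particular regular (indeed normal) and Hausdorff, so \ref{ridHausdorff} applies and yields $|X|\leq d(X)^{\pi\chi(X)}$. Feeding in $d(X)\leq\pi\chi(X)^{c(X)}$ then gives
\[
|X|\leq d(X)^{\pi\chi(X)}\leq\left(\pi\chi(X)^{c(X)}\right)^{\pi\chi(X)}=\pi\chi(X)^{c(X)\pi\chi(X)}\leq 2^{c(X)\pi\chi(X)},
\]
where the last step uses $\pi\chi(X)\leq 2^{\pi\chi(X)}\leq 2^{c(X)\pi\chi(X)}$. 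This is the cleanest route and mirrors how the regular homogeneous case of the bound was obtained in the excerpt.

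The key steps, in order, are: first observe that compactness supplies the regularity needed for \sapirovskii's estimate and the Hausdorffness needed for \ref{ridHausdorff}; second apply \ref{ridHausdorff} to bound $|X|$ by $d(X)^{\pi\chi(X)}$; third substitute the density estimate; and fourth collapse the iterated exponent using cardinal arithmetic, noting $c(X)\pi\chi(X)$ is infinite so that $\pi\chi(X)^{c(X)\pi\chi(X)}=2^{c(X)\pi\chi(X)}$. I would write the displayed chain of inequalities essentially as above and then remark that each link is justified by a previously cited result.

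The main obstacle is really a matter of provenance rather than difficulty: the whole deduction hinges on having Theorem~\ref{ridHausdorff}, which asserts $|X|\leq d(X)^{\pi\chi(X)}$ for \emph{power} homogeneous Hausdorff spaces. That theorem is the genuinely hard input, requiring Ridderbos's projection-map machinery to produce homeomorphisms of $X^\kappa$ compatible with local $\pi$-bases on $X$; I am permitted to assume it, but it is worth flagging that the strength of \ref{vMcompact} is inherited from \ref{ridHausdorff} and not from the elementary arithmetic above. Historically van Mill proved \ref{vMcompact} directly via clustering before \ref{ridHausdorff} was available, so if one wanted a self-contained compact proof the obstacle would shift to reconstructing those clustering arguments; but given the tools already in the excerpt, the short derivation is the natural one and the only subtle point is confirming that the exponents are infinite so the final reduction to base $2$ is valid.
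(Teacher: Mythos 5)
Your derivation is correct, and every link in the displayed chain is justified: a compactum is Hausdorff and regular, Theorem~\ref{ridHausdorff} gives $|X|\leq d(X)^{\pi\chi(X)}$, \sapirovskii's inequality $d(X)\leq\pi\chi(X)^{c(X)}$ applies to regular spaces, and the final collapse $\pi\chi(X)^{c(X)\pi\chi(X)}\leq 2^{c(X)\pi\chi(X)}$ is valid because the exponent is an infinite cardinal. However, this is not the route the survey records for Theorem~\ref{vMcompact}: the paper gives no proof of it, describing instead van Mill's original argument via a variation of van Douwen's clustering techniques, and noting that van Mill obtained the bound as a corollary of his own result $|X|\leq w(X)^{\pi\chi(X)}$ for power homogeneous compacta, which then combines with \sapirovskii's compact-space bound on the weight. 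Your proof replaces that input with Ridderbos's Theorem~\ref{ridHausdorff}, which substitutes $d(X)$ for $w(X)$ and drops compactness from the hypothesis --- a strictly stronger theorem that, as you correctly flag, postdates van Mill's result, so your argument is a clean modern deduction rather than a reconstruction of the original. What the swap buys: your route exactly parallels the \arhangelskii~observation recorded in the paper for regular homogeneous spaces (Theorem~\ref{density} plus \sapirovskii) and isolates compactness as entering only through regularity; what it costs is self-containedness, since all the genuine work is imported through \ref{ridHausdorff} --- and indeed, as the survey notes, Theorem~\ref{ERPH} later removed compactness from the conclusion altogether, so nothing in your derivation is special to compacta.
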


Van Mill's result follows in fact as a corollary to this result in the same paper: $|X|\leq w(X)^{\pi\chi(X)}$ for a power homogeneous compactum $X$. (Recall that later it was shown that $|X|\leq d(X)^{\pi\chi(X)}$ for any power homogeneous Hausdorff space by Ridderbos (Theorem~\ref{ridHausdorff})). We will see, however, in Theorem~\ref{ERPH} below that the cardinality bound $2^{c(X)\pi\chi(X)}$ holds for any power homogeneous Hausdorff space. 

Soon after van Mill's result, Bella gave an improvement of Theorem~\ref{VD} for regular power homogenous spaces using the cardinal function $c^*(X)$. Recall that if $X=\prod_{i\in T}X_i$ is an arbitrary product of spaces and $c(\prod_{i\in F}X_i)\leq\kappa$ for each finite subset $F$ of $T$, then $c(X)\leq\kappa$. If follows that if $X$ is a space and $\lambda$ is an infinite cardinal then $c(X^\lambda)=c^*(X)$.

\begin{theorem}[Bella \cite{bel05}, 2005]
If $X$ is a power homogeneous $T_3$ space, then $|X|\leq 2^{c^*(X)\pi\chi(X)}$.
\end{theorem}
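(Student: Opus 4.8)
The plan is to obtain the bound by chaining two results already available in the excerpt rather than re-deriving the power-homogeneous machinery. The hypothesis that $X$ is $T_3$ means $X$ is in particular Hausdorff, so the power-homogeneous bound of Theorem~\ref{ridHausdorff} applies and gives $|X|\leq d(X)^{\pi\chi(X)}$; this is where power homogeneity is actually used. Since $T_3$ also gives regularity, \sapirovskii's inequality $d(X)\leq\pi\chi(X)^{c(X)}$ applies as well. The idea is then to substitute the second bound into the first and let cardinal arithmetic collapse everything to a power of $2$.

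Concretely I would compute
$$|X|\leq d(X)^{\pi\chi(X)}\leq\bigl(\pi\chi(X)^{c(X)}\bigr)^{\pi\chi(X)}=\pi\chi(X)^{c(X)\pi\chi(X)}\leq\bigl(2^{\pi\chi(X)}\bigr)^{c(X)\pi\chi(X)}=2^{c(X)\pi\chi(X)},$$
using $\pi\chi(X)\leq 2^{\pi\chi(X)}$ in the last inequality and $\pi\chi(X)\cdot\pi\chi(X)=\pi\chi(X)$ in the final equality. Finally, since a cellular family in $X$ lifts through a projection to a cellular family in any power, we have $c(X)\leq c(X^\lambda)=c^*(X)$ for infinite $\lambda$, whence $2^{c(X)\pi\chi(X)}\leq 2^{c^*(X)\pi\chi(X)}$, which is the asserted bound. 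In fact this route delivers the formally stronger $|X|\leq 2^{c(X)\pi\chi(X)}$, consistent with the later observation (Theorem~\ref{ERPH}) that the $c(X)$-version holds for all power homogeneous Hausdorff spaces.

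The reason the argument is this short is that the genuine difficulty has been absorbed into Theorem~\ref{ridHausdorff}. A self-contained proof in the spirit of Bella's original 2005 argument—which predates Ridderbos's theorem and is precisely where the invariant $c^*(X)$ arises naturally—would instead fix an infinite $\lambda$ with $Y=X^\lambda$ homogeneous and $T_3$ and run the Erd\"os--Rado cellular-family argument of Theorem~\ref{ER} inside $Y$. The main obstacle there is that one cannot simply apply a bound to $Y$ directly, because $\pi\chi(Y)=\pi\chi(X^\lambda)$ carries an unwanted factor of $\lambda$; the whole point is to transfer the homeomorphisms of $Y$ down to $X$ through the projections $\pi:X^\lambda\to X$ so that the selected $\pi$-base sets are governed by $\pi\chi(X)$, while the cellular families they produce live in the power and are therefore bounded by $c(Y)=c(X^\lambda)=c^*(X)$. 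That projection-transfer step—essentially the technical heart of Ridderbos's Corollary 3.3—is the step I expect to be the real obstacle; once it is in place, the counting proceeds exactly as in the proof of Theorem~\ref{ER}.
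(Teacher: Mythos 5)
Your derivation is correct, but it is not the route of the cited source: the survey states Bella's theorem without proof (it is one of the results ``simply cited''), and Bella's original 2005 argument could not have run as yours does, since Theorem~\ref{ridHausdorff} is Ridderbos's 2006 result. Every step checks out: $T_3$ gives Hausdorff, so Theorem~\ref{ridHausdorff} yields $|X|\leq d(X)^{\pi\chi(X)}$; $T_3$ gives regularity, so \sapirovskii's inequality $d(X)\leq\pi\chi(X)^{c(X)}$ applies; the arithmetic $\bigl(\pi\chi(X)^{c(X)}\bigr)^{\pi\chi(X)}=\pi\chi(X)^{c(X)\pi\chi(X)}\leq 2^{c(X)\pi\chi(X)}$ is sound for infinite $\pi\chi(X)$; and $c(X)\leq c^*(X)$ holds because projection preimages of a cellular family in $X$ form a cellular family in $X^\lambda$. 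There is also no circularity, as Ridderbos's theorem is not proved via Bella's. What your route buys: a three-line deduction that in fact gives the stronger conclusion $|X|\leq 2^{c(X)\pi\chi(X)}$ for power homogeneous $T_3$ spaces, anticipating Theorem~\ref{ERPH} of \cite{CR08} and exhibiting Bella's bound as a formal corollary of later results --- which matches the survey's own framing when it notes that Theorem~\ref{ERPH} extends the $2^{c(X)\pi\chi(X)}$ bound to all power homogeneous Hausdorff spaces. What it costs: all of the genuinely power-homogeneous content is outsourced to Theorem~\ref{ridHausdorff}, which the survey likewise states without proof, so nothing self-contained has been established; Bella's own argument, as you correctly surmise at the end, works inside a homogeneous power $Y=X^\lambda$, where $c^*(X)=c(X^\lambda)$ is the invariant that naturally appears, and must transfer homeomorphism and $\pi$-base data down through the projections precisely because $\pi\chi(X^\lambda)=\lambda\cdot\pi\chi(X)$ renders a direct application of any bound to $Y$ useless. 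Your identification of that projection-transfer step as the technical heart of a self-contained proof is accurate.
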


Note that $c^*(X)\pi\chi(X)\leq \pi w(X)$ for any space. Bella's result is also a real improvement of Theorem~\ref{VD}, as the same space $X$ in \cite{Kunen} satisfies $c^*(X)\pi\chi(X) = \omega$ and $\pi w(X) = \omega_1$.

In 2008 it was finally shown that $2^{c(X)\pi\chi(X)}$ is a bound for the cardinality of any power homogeneous Hausdorff space. This represents a second full improvement of van Douwen's theorem alongside Theorem~\ref{ridHausdorff}. The proof of this is a sophisticated application of the Erd\"os-Rado theorem.

\begin{theorem}[C. and Ridderbos \cite{CR08}, 2008]\label{ERPH}
If $X$ is power homogeneous and Hausdorff then $|X|\leq 2^{c(X)\pichar{X}}$.
\end{theorem}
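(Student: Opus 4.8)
The plan is to transport the Erd\"os--Rado argument of Theorem~\ref{ER} from the homogeneous setting into the power homogeneous one. Fix a cardinal $\kappa$ for which $X^\kappa$ is homogeneous; this space is again Hausdorff, and write $\rho:X^\kappa\to X$ for the projection onto a single fixed coordinate. Set $\mu=\cellularity{X}\pichar{X}$ and suppose, toward a contradiction, that $\card{X}>2^\mu$. Running the proof of Theorem~\ref{ER} verbatim inside $X^\kappa$ would produce a cellular family in $X^\kappa$ and so contradict only $c(X^\kappa)=c^{*}(X)$, yielding the weaker bound $2^{c^{*}(X)\pichar{X}}$. The entire difficulty is therefore to arrange that the cellular family one extracts actually lives in the single factor $X$, so that it contradicts $\cellularity{X}$ rather than $c^{*}(X)$.

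The device that makes this possible is Ridderbos's refinement of the translating homeomorphisms (Corollary 3.3 of \cite{Rid06}), already invoked for Theorem~\ref{ridHausdorff}. After fixing a base point $p\in X^\kappa$, it furnishes homeomorphisms $h_x:X^\kappa\to X^\kappa$ with $h_x(p)=x$ for each $x\in X^\kappa$, together with a local $\pi$-base $\scr{D}$ at $\rho(p)$ in the factor $X$ of size $\card{\scr{D}}\leq\pichar{X}$, such that each $h_x$ carries the cylinders $\rho^{\leftarrow}[D]$ over members $D\in\scr{D}$ \emph{into} cylinders over $X$: for every $x$ and every open $U\ni\rho(x)$ in $X$ there is $D\in\scr{D}$ with $h_x[\rho^{\leftarrow}[D]]\sse\rho^{\leftarrow}[U]$. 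Since $\rho$ is an open map, each image $\rho\bigl(h_x[\rho^{\leftarrow}[D]]\bigr)$ is then a nonempty open subset of $X$ contained in $U$; this is exactly the mechanism that will push the separating sets down to the factor.

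With $\scr{D}$ and the $h_x$ in hand I would set up the partition calculus as follows. Choose $S\sse X$ with $\card{S}=(2^\mu)^+$ and, using surjectivity of $\rho$, lift each $s\in S$ to a point $x_s\in X^\kappa$ with $\rho(x_s)=s$; fix a well-order on $S$. For $s<t$ in $S$, separate $s,t$ in the Hausdorff space $X$ by disjoint open sets, and use the cylinder property above to pick members of $\scr{D}$ whose images under $h_{x_s}$ and $h_{x_t}$ project into these two disjoint open sets; color the pair $\{s,t\}$ by the resulting (ordered) element of $\scr{D}\times\scr{D}$, a palette of size $\leq\pichar{X}\leq\mu$. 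Since $(2^\mu)^+\to(\mu^+)^2_\mu$ by Erd\"os--Rado and $\mu^+>\cellularity{X}$, there is a homogeneous $Y\in[S]^{\mu^+}$ on which the color is constant; after reducing this constant color to a single $D\in\scr{D}$ serving both endpoints of every pair, the sets $h_{x_s}[\rho^{\leftarrow}[D]]$ have pairwise disjoint $\rho$-projections for $s\neq t$ in $Y$. Thus $\bigl\{\rho\bigl(h_{x_s}[\rho^{\leftarrow}[D]]\bigr):s\in Y\bigr\}$ is a cellular family of nonempty open subsets of the factor $X$ of cardinality $\mu^+>\cellularity{X}$, the desired contradiction.

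The step I expect to be the genuine obstacle is the second paragraph: producing homeomorphisms of $X^\kappa$ that respect $\rho$ in the stated ``cylinder into cylinder'' sense, i.e.\ establishing (or correctly invoking) Ridderbos's Corollary 3.3. A general homeomorphism of $X^\kappa$ mixes coordinates arbitrarily, so there is no a priori reason a cylinder over one factor should map inside another cylinder; in the homogeneous proof of Theorem~\ref{ER} the single base element was found directly inside the neighborhood $h_x^{\leftarrow}[U]\meet h_y^{\leftarrow}[V]$ of $p$, but in the power case cylinders are ``fat'' in the remaining coordinates and such a neighborhood need not contain one. Controlling this interaction between the homogeneity of the power and a local $\pi$-base of the ground factor is precisely what separates the $\cellularity{X}$ bound here from the easy $c^{*}(X)$ bound. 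A secondary point requiring care is the bookkeeping that lets a single $D\in\scr{D}$ serve both endpoints of each pair after homogenization, since an unreduced ordered color would leave the projected family merely ``cross-disjoint'' rather than genuinely cellular.
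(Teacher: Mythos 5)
First, a point of reference: the survey itself does not prove Theorem~\ref{ERPH}; it cites \cite{CR08} and describes the argument only as ``a sophisticated application of the Erd\"os--Rado theorem.'' So your proposal has to stand on its own, and it does correctly identify the right strategy (Erd\"os--Rado run over colors coming from a fixed small family, with the resulting cellular family pushed down to the factor $X$ via the projection $\rho$). The problem is that the two steps you flag as ``obstacles'' are not side issues to be deferred --- together they \emph{are} the proof --- and the patch you sketch for the second one does not work. Concretely: from the one-point property you attribute to Ridderbos's Corollary 3.3 (for every $x$ and every open $U\ni\rho(x)$ there is $D\in\mathcal{D}$ with $h_x[\rho^{\leftarrow}[D]]\subseteq\rho^{\leftarrow}[U]$), your coloring of a pair $\{s,t\}$, $s<t$, by an ordered pair $(D_s,D_t)$ yields after homogenization only the cross-disjointness relation $A_s\cap B_t=\varnothing$ for $s<t$, where $A_s=\rho(h_{x_s}[\rho^{\leftarrow}[D_1]])$ and $B_t=\rho(h_{x_t}[\rho^{\leftarrow}[D_2]])$. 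This cannot be ``reduced to a single $D$ serving both endpoints'': the witnesses for $s$ and for $t$ come from applying the one-point property at different points to different neighborhoods, and nothing ties them together. Cross-disjointness is consistent with all the $A_s$ being one fixed open set $A$ and all the $B_t$ being one fixed open set $B$ disjoint from $A$, in which case no cellular family of size $\mu^+$ appears and there is no contradiction.

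The gap is not merely hypothetical; the ingredients your argument actually uses are satisfied by $\beta\omega$, which violates the conclusion. In $\beta\omega$ the singletons $\{n\}$, $n\in\omega$, form a single countable family that is a local $\pi$-base at \emph{every} point, so every point has a $\pi$-base uniformly indexed by a fixed countable set --- which is all that your coloring and extraction step use once property (*) is invoked. Running your argument there produces, after Erd\"os--Rado, a constant color $(n_1,n_2)$ with $n_1\neq n_2$ and the ``family'' $\{\,\{n_1\},\{n_2\}\,\}$: cross-disjoint, but certainly not a cellular family of size $\mathfrak{c}^+$, and indeed $|\beta\omega|=2^{\mathfrak{c}}>2^{c(\beta\omega)\pi\chi(\beta\omega)}=\mathfrak{c}$. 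What the theorem actually requires is the genuinely pairwise statement: a single family $\mathcal{B}$ of nonempty open subsets of $X^\kappa$ with $|\mathcal{B}|\leq\pi\chi(X)$ such that for every pair $x\neq y$ in $X$ there is \emph{one} $B\in\mathcal{B}$ with $\rho(h_x[B])\cap\rho(h_y[B])=\varnothing$; with that in hand, your Erd\"os--Rado step and the contradiction with $c(X)$ go through verbatim. Producing such a family is where power homogeneity does its real work (and why the bound fails for the non-power-homogeneous $\beta\omega$); it cannot be obtained as in Theorem~\ref{ER}, since, as you yourself observe, the neighborhood $h_{x_s}^{\leftarrow}[\rho^{\leftarrow}[U]]\cap h_{x_t}^{\leftarrow}[\rho^{\leftarrow}[V]]$ of $p$ in $X^\kappa$ contains finite-support basic open sets but need not contain any full cylinder $\rho^{\leftarrow}[D]$. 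Establishing this separating family via a two-point refinement of Ridderbos's homeomorphism machinery is the ``sophisticated'' content of \cite{CR08} that your proposal leaves unproved. (A minor further slip: running Theorem~\ref{ER} verbatim in $X^\kappa$ would not even give $2^{c^{*}(X)\pi\chi(X)}$, since $\pi\chi(X^\kappa)=\kappa\cdot\pi\chi(X)$, so the exponent picks up $\kappa$ as well.)
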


A decade later a variation of this result was shown for spaces that are Urysohn or quasiregular. Recall that a space is \emph{quasiregular} if every nonempty open set contains a nonempty regular closed set. A collection of nonempty open sets is a \emph{Urysohn cellular family} if the closures of any two are disjoint. We define the \emph{Urysohn cellularity} of a space $X$ as $Uc(X)=\sup\{|\scr{C}|: \scr{C}\textup{ is a Urysohn cellular family}\}$. It is clear that $Uc(X)\leq c(X)$ for any space $X$.

\begin{theorem}[Bonanzinga, C., Cuzzup\'e, Stavrova \cite{BCCS2018}, 2018]\label{phUry}
If $X$ is a power homogeneous space that is Urysohn or quasiregular, then $|X|\leq 2^{Uc(X)\pi\chi(X)}$.
\end{theorem}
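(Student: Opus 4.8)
The plan is to reprise the Erd\H{o}s--Rado argument of Theorem~\ref{ER} (in its power-homogeneous incarnation, Theorem~\ref{ERPH}), but to arrange the partition so that the cellular family it manufactures witnesses that $Uc(X)$, rather than $c(X)$, is large. Put $\kappa = Uc(X)\pi\chi(X)$ and assume for contradiction that $|X| > 2^\kappa$. The hypothesis splits the argument into two cases, and in each case the aim is to extract from a large homogeneous set either a Urysohn cellular family of size $\kappa^+$ directly, or an ordinary cellular family of size $\kappa^+$ that can be converted into one.

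First I would install the power-homogeneous scaffolding exactly as in Theorem~\ref{ERPH}. Since $X$ need not itself be homogeneous, I would pass to a homogeneous power and invoke the homeomorphisms supplied by Corollary~3.3 of \cite{Rid06}, whose defining feature is that they relate local $\pi$-bases of the power to local $\pi$-bases of $X$ through the projection maps; this is precisely what keeps $\pi\chi(X)$, rather than $\pi\chi$ of the power, in control. Granting this, after fixing a base point $p$ I obtain maps $h_x$ indexed by $x\in X$ and a local $\pi$-base $\scr{B}$ at $p$ with $|\scr{B}|\leq\pi\chi(X)$, and I define a coloring $B\colon [X]^2\to\scr{B}$ by assigning to each pair $\{x,y\}$ a member $B(x,y)\in\scr{B}$ lying in a neighbourhood of $p$ tailored to $x$ and $y$. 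Erd\H{o}s--Rado then yields $Y\in [X]^{\kappa^+}$ and a single $B\in\scr{B}$ with $B(x,y)=B$ for all distinct $x,y\in Y$, so that $\scr{C}=\{h_x[B]:x\in Y\}$ has size $\kappa^+$.

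It remains to choose the tailoring neighbourhoods so that $\scr{C}$ witnesses $Uc(X)\geq\kappa^+$, and this is where the two hypotheses enter differently. If $X$ is Urysohn, then for $x\neq y$ I separate $x$ and $y$ by open sets $U$ and $V$ with $\overline{U}\meet\overline{V}=\es$ and take $B(x,y)\sse h_x^{\leftarrow}[U]\meet h_y^{\leftarrow}[V]$; then $h_x[B]\sse U$ and $h_y[B]\sse V$, so $\overline{h_x[B]}\sse\overline{U}$ and $\overline{h_y[B]}\sse\overline{V}$ are disjoint, and $\scr{C}$ is a Urysohn cellular family of size $\kappa^+>Uc(X)$, a contradiction. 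If instead $X$ is quasiregular, the coloring of Theorem~\ref{ER} produces only disjoint \emph{open} translates $h_x[B]\meet h_y[B]=\es$, so $\scr{C}$ is an ordinary cellular family; here I would invoke the identity $Uc(X)=c(X)$ valid for quasiregular spaces, which one proves by shrinking each member of a cellular family to the interior of a nonempty regular closed set it contains, so that the shrunken family has pairwise disjoint closures. Thus $\kappa^+=|\scr{C}|\leq c(X)=Uc(X)\leq\kappa$, again a contradiction. In both cases we conclude $|X|\leq 2^\kappa$.

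The main obstacle is not the partition calculus, which is formally identical to Theorem~\ref{ER}, but the preparatory work on either side of it. The genuinely delicate ingredient is the power-homogeneous transfer: producing the maps $h_x$ together with a local $\pi$-base at $p$ whose images can be pushed down to $X$ through the projections while $\pi$-character is kept under control, as carried out in \cite{Rid06} and \cite{CR08}. A secondary point requiring care is the separation used to define the coloring: the Urysohn hypothesis furnishes disjoint closures outright, whereas in the quasiregular case one must arrange the disjoint open translates in the first place and then lean on the identity $Uc(X)=c(X)$, so that the weaker Urysohn cellularity still suffices to close the contradiction.
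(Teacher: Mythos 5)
Your overall strategy --- rerunning the Erd\H{o}s--Rado argument of Theorem~\ref{ER} with the separation upgraded so that the resulting family has pairwise disjoint closures, and splitting into the Urysohn and quasiregular cases --- is the right one (the survey does not reproduce the proof from \cite{BCCS2018}, but this is the approach its cited machinery supports). Two parts of your proposal are fully correct. First, the quasiregular case: your shrinking argument that $Uc(X)=c(X)$ for quasiregular spaces is sound (a nonempty regular closed set $F\sse C$ has nonempty interior $V$ with $\cl{V}=F\sse C$), and once you have this identity you need not re-enter the partition argument at all --- Theorem~\ref{ERPH} can be cited as a black box, giving $|X|\leq 2^{c(X)\pi\chi(X)}=2^{Uc(X)\pi\chi(X)}$. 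Second, in the case where $X$ itself is \emph{homogeneous}, your Urysohn modification of the proof of Theorem~\ref{ER} is correct as written.

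The genuine gap is in the power homogeneous Urysohn case, and it sits exactly at the step you pass over with ``Granting this.'' Once you move to a homogeneous power $X^\mu$, the maps $h_x$ are self-homeomorphisms of $X^\mu$, so the sets $h_x[B]$ are open subsets of $X^\mu$, not of $X$. Two symptoms in your write-up flag this: you form $h_x^{\leftarrow}[U]$ for $U$ open in $X$ (a type mismatch), and you posit a local $\pi$-base at $p\in X^\mu$ of size $\leq\pi\chi(X)$, which in general does not exist, since $\pi$-character grows in powers (e.g.\ $\pi\chi(2^{\omega_1})=\omega_1$ while $\pi\chi(2)=\omega$); Corollary~3.3 of \cite{Rid06} supplies only a family that behaves like a $\pi$-base relative to pullbacks of neighborhoods along $\pi\circ h_x$, where $\pi:X^\mu\to X$ is a projection. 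More seriously, your final family $\scr{C}=\{h_x[B]:x\in Y\}$ is then a Urysohn cellular family in $X^\mu$, and $|\scr{C}|=\kappa^+$ contradicts only $Uc(X^\mu)\leq\kappa$, not $Uc(X)\leq\kappa$: cellularity and Urysohn cellularity are not preserved downward from powers (indeed $c(X^\mu)$ can consistently exceed $c(X)$, being bounded in ZFC only by $2^{c(X)}$, which is no obstruction to a family of size $\kappa^+$). The repair, which is the actual content of the transfer used in \cite{CR08}, is to route everything through $\pi$: arrange $\pi(h_x(p))=x$, take the Urysohn separation $U,V$ \emph{inside $X$} and pull it back, color with $B(x,y)\sse h_x^{\leftarrow}[\pi^{\leftarrow}[U]]\meet h_y^{\leftarrow}[\pi^{\leftarrow}[V]]$, and at the end push forward: the witnessing family must be $\{\pi[h_x[B]]:x\in Y\}$, whose members are nonempty open subsets of $X$ (projections are open maps) satisfying $\pi[h_x[B]]\sse U(x,y)$ and $\pi[h_y[B]]\sse V(x,y)$, hence have pairwise disjoint closures \emph{in $X$}, contradicting $Uc(X)\leq\kappa$. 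With that projection step inserted your argument closes; without it, the contradiction never engages $Uc(X)$ at all.
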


%%%%%%%%%%%%%%%%%%%%%%%%%%%%%%%%%%%%%%%%%%%%%%%%%%%%%%
\section{Compact homogeneous spaces and de la Vega's Theorem}

In 1970 ~\arhangelskii~ showed that the cardinality of a sequential, homogeneous compactum is at most $\cont$~\cite{arh70a}\cite{arh70b}. (By \emph{compactum} we mean a compact Hausdorff space). He then asked if the sequential property can be relaxed to countably tight (see~\cite{arh70a}). In 2006, de la Vega~\cite{DLV2006} answered this long-standing question by showing that the cardinality of a homogeneous compactum is bounded by $2^{t(X)}$. Previously Dow~\cite{Dow88} had shown under PFA that any compact space $X$ of countable tightness contains a point of countable character; thus if the space is additionally homogeneous then $|X|\leq\mathfrak{c}$. 

De la Vega's original proof involved the elementary submodels technique and, in fact, showed that $|X|\leq 2^{L(X)t(X)pct(X)}$ for any regular homogeneous space. (See the next section for the definition of $pct(X)$ and a discussion of this and other generalizations of de la Vega's Theorem). It was observed in \cite{CR2012} that much of the work of de la Vega's elementary submodel proof can be replaced by a theorem of Pytkeev concerning covers by $G_\kappa$-sets. If $X$ is a space and $\kappa$ an infinite cardinal, the $G_\kappa$-\emph{modification} $X_\kappa$ of $X$ is the space formed on the underlying set $X$ by taking the collection of $G_\kappa$-sets as a basis.

\begin{theorem}[Pytkeev \cite{Pyt1985}, 1985]\label{pyt}
Let $X$ be a compactum and $\kappa$ an infinite cardinal. Then $L(X_\kappa)\leq 2^{t(X)\cdot\kappa}$.
\end{theorem}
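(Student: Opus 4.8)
The plan is to prove the equivalent covering statement: writing $\tau = t(X)\cdot\kappa$, every cover $\mathcal{U}$ of $X$ by basic open sets of $X_\kappa$ (that is, by $G_\kappa$-subsets of $X$) has a subcover of cardinality at most $2^\tau$. If $|X|\le 2^\tau$ this is immediate, since any space $Z$ satisfies $L(Z_\kappa)\le |Z|$; so the real content is the case $|X|>2^\tau$, where a genuine reduction of the cover is forced. I would run a closing-off argument: fix a large regular $\theta$ and an elementary submodel $M\prec H(\theta)$ with $X$, its topology, $\mathcal{U}$, $\kappa$, $\tau\in M$, with $|M|=2^\tau$, and with $M$ closed under $\tau$-sequences, i.e. ${}^\tau M\subseteq M$; this is possible because $(2^\tau)^\tau = 2^\tau$.

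The key idea is that compactness and tightness together should let $M$ ``see'' a covering of the relevant compact pieces of $X$. For a point $p\in\overline{X\cap M}$, the bound $t(X)\le\tau$ produces a set $T\subseteq X\cap M$ with $|T|\le\tau$ and $p\in\overline{T}$; since $M$ is $\tau$-closed, $T\in M$, hence the compact set $\overline{T}\in M$ as well. Now $\overline{T}$ is a compactum with a dense subset of size $\le\tau$, so a \sapirovskii-type bound gives $|\overline{T}|\le 2^\tau$; consequently $\mathcal{U}$ has a subfamily of size $\le 2^\tau$ covering $\overline{T}$, and by elementarity such a subfamily $\mathcal{V}_T$ may be chosen with $\mathcal{V}_T\in M$. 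Then $p\in\overline{T}\subseteq\bigcup\mathcal{V}_T$. Letting $\mathcal{W}$ be the union of the $\mathcal{V}_T$ as $T$ ranges over the (at most $|M|^\tau=2^\tau$ many) elements $T\in[X\cap M]^{\le\tau}\cap M$, we obtain a subfamily of $\mathcal{U}$ of cardinality at most $2^\tau\cdot 2^\tau=2^\tau$ covering $\overline{X\cap M}$.

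The main obstacle is twofold, and both difficulties concentrate the essential use of compactness. First, one must establish the cardinality estimate $|\overline{T}|\le 2^\tau$ for the closure of a $\tau$-sized set: this is a \sapirovskii/\arhangelskii-type closure bound for compacta of tightness $\le\tau$, and it is precisely where compactness is indispensable (it fails badly in non-compact spaces such as $\beta\omega$). Second, and more seriously, the argument above only covers $\overline{X\cap M}$, whereas points of $X$ lying outside this closure are not reflected by a $\tau$-sized trace in $M$. Handling them requires either an a priori density estimate forcing $X\cap M$ to be dense in $X$, or --- more robustly --- iterating the closing-off along an increasing continuous chain $\langle M_\alpha:\alpha<\mu\rangle$ of such submodels and proving that the uncovered remainders shrink and the process stabilizes after at most $2^\tau$ steps. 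Controlling the length of this iteration, by playing the non-compactness of $X_\kappa$ against the compactness of $X$ together with the tightness bound, is where I expect essentially all of the technical difficulty to reside; the surrounding cardinal arithmetic and elementarity bookkeeping are routine.
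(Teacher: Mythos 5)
Your argument rests on a lemma that is not a theorem of ZFC: that a compactum $\overline{T}$ with a dense subset of size $\le\tau$ and tightness $\le\tau$ has cardinality at most $2^\tau$. No such ``\sapirovskii-type bound'' exists for arbitrary compacta. Under $\diamondsuit$, Fedorchuk's space $S$ is a compact, hereditarily separable (hence countably tight) space in which every infinite closed set has cardinality $2^{\omega_1}$, so $|S|=2^{\omega_1}>\mathfrak{c}=2^\omega$; taking $T$ to be a countable dense subset of $S$ gives exactly your configuration ($d(\overline{T})\le\tau$, $t(\overline{T})\le\tau$, since tightness is hereditary) with $|\overline{T}|>2^\tau$. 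This is precisely the phenomenon this survey is organized around: the bounds $|X|\le d(X)^{\pi\chi(X)}$ (Theorem~\ref{density}) and $|X|\le 2^{t(X)}$ (Theorem~\ref{DLV}) require \emph{homogeneity}, and what holds for a general compactum is only the much weaker Theorem~\ref{arh2.2.4}, which asserts that \emph{some} nonempty $G_\kappa$-set lies in the closure of a set of size $\le 2^\kappa$; it neither applies to every $\overline{T}$ nor bounds $|\overline{T}|$ itself. Without the lemma, the families $\mathcal{V}_T$ cannot be extracted and the covering of $\overline{X\cap M}$ collapses.

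The flaw is structural, not patchable: your scheme, if completed (including the iteration you defer), would cover $X$ by at most $2^\tau$ sets of the form $\overline{T}$, each of size $\le 2^\tau$, and hence would prove $|X|\le 2^{t(X)\cdot\kappa}$ for \emph{every} compactum --- a statement that Fedorchuk's space refutes consistently. Pytkeev's theorem bounds $L(X_\kappa)$ rather than $|X|$ exactly because $|X|$ admits no such bound, so any correct proof must produce a small subcover without routing through a cardinality estimate on closures of $\tau$-sized sets. Separately, the portion of your argument that you acknowledge as missing --- passing from $\overline{X\cap M}$ to all of $X$ via a chain of submodels with ``shrinking remainders'' --- is where the real content of the theorem lives, and it is asserted rather than argued. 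For comparison, this survey does not reprove Theorem~\ref{pyt} at all: it is quoted from \cite{Pyt1985} and described as an elaborate closing-off argument, and it is then used as a black box (together with Theorem~\ref{arh2.2.4} and homogeneity) in the proof of Theorem~\ref{DLV}, which is where the roles of your two false or missing steps are correctly filled by homogeneity and by Pytkeev's theorem respectively.
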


Another crucial ingredient in the proof of de la Vega's theorem is a result of~\arhangelskii's from~\cite{arh78}.

\begin{theorem}[\arhangelskii~\cite{arh78}, 1978]\label{arh2.2.4}
Let $X$ be a compactum and let $\kappa=t(X)$. There exists a non-empty $G_\kappa$-set $G$ and a set $H\sse X$ such that $|H|\leq 2^{\kappa}$ and $G\sse\overline{H}$. 
\end{theorem}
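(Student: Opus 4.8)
The plan is to reduce the statement to a single \emph{local density} fact and then establish that fact by a free-sequence argument.

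First I would record the reduction. Suppose one can find a non-empty open set $U\sse X$ together with a set $D\sse U$ that is dense in $U$ and has $|D|\le 2^\kappa$. Since $X$ is a compactum it is normal, so starting from $U$ one builds by recursion non-empty open sets $U=U_0\supseteq\overline{U_1}\supseteq U_1\supseteq\overline{U_2}\supseteq\cdots$ with $\overline{U_{n+1}}\sse U_n$; then $G=\Meet_{n<\ww}U_n=\Meet_{n<\ww}\overline{U_n}$ is a non-empty (by compactness) $G_\delta$-set, hence a $G_\kappa$-set, and $G\sse U$. Taking $H=D$ gives $|H|\le 2^\kappa$ and $G\sse U\sse\overline{D}=\overline{H}$, which is exactly the conclusion. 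Note that the role of $2^\kappa$ is confined entirely to the existence of $U$: this padding step uses neither tightness nor any counting, only normality and compactness. Thus everything reduces to the following.
\begin{claim}
If $X$ is a compactum with $t(X)=\kappa$, then some non-empty open subset $U$ of $X$ satisfies $d(U)\le 2^\kappa$.
\end{claim}

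To prove the Claim I would argue by contradiction, exploiting the fact that in a compactum tightness bounds the length of free sequences. Indeed, if $\langle x_\al:\al<\kappa^+\rangle$ were a free sequence, take a complete accumulation point $p$ of $\{x_\al:\al<\kappa^+\}$ (which exists by compactness); applying $t(X)\le\kappa$ yields $B\in[\{x_\al\}]^{\le\kappa}$ with $p\in\overline{B}$, and since $\kappa^+$ is regular, $B\sse\{x_\al:\al<\be\}$ for some $\be<\kappa^+$, so $p\in\overline{\{x_\al:\al<\be\}}$, while completeness of $p$ forces $p\in\overline{\{x_\al:\al\ge\be\}}$ as well, contradicting freeness. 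Hence no free sequence of length $\kappa^+$ exists. So it suffices to show that if every non-empty open set had density $>2^\kappa$ one could build such a free sequence: at stage $\al$ the initial segment $A_\al=\{x_\xi:\xi<\al\}$ has size $\le\kappa$ and so is far too small to be dense, whence $\overline{A_\al}$ is nowhere dense; one then selects $x_\al$ outside $\overline{A_\al}$ together with an open separator $U_\al\supseteq\overline{A_\al}$ whose closure every later point will avoid, and maintaining these separators produces the required free sequence and the desired contradiction.

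The main obstacle is precisely this construction at limit stages. A separating open set $U_\al$ around the non-empty closed set $\overline{A_\al}$ necessarily has non-empty interior, so the complements $X\setminus\overline{U_\al}$ are \emph{not} dense, and one cannot naively guarantee that, after $\le\kappa$ separators have been introduced, a point remains outside $\Un_{\be\le\al}\overline{U_\be}$. Keeping ``room'' available through limits of cofinality up to $\kappa$ is the delicate heart of the matter, and it is here that the surplus in the density hypothesis (density exceeding $2^\kappa$ rather than merely $\kappa$) must be spent, through a careful bookkeeping of the separators. This is the step I expect to be hardest, and the reason the result is usually invoked rather than reproved. An attractive alternative that organizes the same limit bookkeeping automatically is to run the entire construction inside a $\kappa$-closed elementary submodel $M\prec H(\te)$ of size $2^\kappa$ with $X\in M$: setting $H=X\cap M$, one uses $[M]^{\le\kappa}\sse M$ to push the length-$\kappa$ recursion through all limits and $t(X)\le\kappa$ to force the resulting non-empty $G_\kappa$-set into $\overline{H}$.
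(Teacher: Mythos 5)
Your opening reduction is correct, and your argument that a compactum of tightness $\kappa$ admits no free sequence of length $\kappa^+$ is also correct. But the proof has a genuine gap, and you have named it yourself: the free-sequence construction under the hypothesis ``every non-empty open set has density $>2^\kappa$'' is never actually carried out. Each separator $U_\beta$ is an open set containing the non-empty closed set $\overline{A_\beta}$, so $\overline{U_\beta}$ has non-empty interior; after $\leq\kappa$ stages nothing prevents $\bigcup_{\beta\leq\alpha}\overline{U_\beta}$ from covering $X$, and the density surplus gives no evident way to choose the $U_\beta$ so that a point survives at limit stages (nowhere density of $\overline{A_\alpha}$ does not transfer to $\overline{U_\alpha}$). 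Worse, your Claim is a formally \emph{stronger} statement than the theorem: your padding argument produces a $G_\delta$-set, not merely a $G_\kappa$-set, inside $\overline{H}$, while conversely the theorem does not imply the Claim, since the $G_\kappa$-set it produces may be nowhere dense. So you have reduced the problem to a statement whose truth is itself unclear, and then left its proof incomplete; the closing remark about a $\kappa$-closed elementary submodel names de la Vega's technique but contains no argument.

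The missing idea --- and this is essentially \arhangelskii's own proof --- is to make the $G_\kappa$-sets part of the recursion, rather than trying to keep ``room'' with open separators around initial segments. Build a decreasing sequence of non-empty closed $G_\kappa$-sets $G_\alpha$ and points $x_\alpha\in G_\alpha$ for $\alpha<\kappa^+$: let $G_0=X$; at stage $\alpha$, if $G_\alpha\subseteq\overline{\{x_\beta:\beta<\alpha\}}$, stop --- this \emph{is} the conclusion of the theorem, with $G=G_\alpha$ and $H=\{x_\beta:\beta<\alpha\}$ of size $\leq\kappa\leq 2^\kappa$. Otherwise pick $x_\alpha\in G_\alpha\setminus\overline{A_\alpha}$, use regularity to find an open $V_\alpha\supseteq\overline{A_\alpha}$ with $x_\alpha\notin\overline{V_\alpha}$, find a closed $G_\delta$-set $F_\alpha$ with $x_\alpha\in F_\alpha\subseteq X\setminus\overline{V_\alpha}$ (interpolate a nested sequence of closed neighborhoods), and set $G_{\alpha+1}=G_\alpha\cap F_\alpha$; at limits put $G_\lambda=\bigcap_{\beta<\lambda}G_\beta$, which is a closed $G_\kappa$-set and is non-empty because nested non-empty closed sets in a compactum have non-empty intersection. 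If the construction never stops, then $\{x_\alpha:\alpha<\kappa^+\}$ is free: the tail $\{x_\eta:\eta\geq\beta\}$ lies in the closed set $F_\beta$, which is disjoint from $V_\beta\supseteq\overline{A_\beta}$; this contradicts $t(X)=\kappa$ by your own accumulation-point argument. Here compactness is applied to nested closed sets, so limit stages cost nothing, and the dichotomy ``stop $=$ theorem, never stop $=$ long free sequence'' replaces the density bookkeeping entirely; no auxiliary Claim is needed. Note this argument even yields $|H|\leq\kappa$, which is the form the paper actually invokes in its proof of de la Vega's Theorem.
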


Using Theorems \ref{density}, \ref{arh2.2.4}, and \ref{pyt} a simplified proof of de la Vega's Theorem was given in \cite{CR2012}. We give this below as our third fundamental proof.
\begin{theorem} [de la Vega \cite{DLV2006}, 2006]\label{DLV} If $X$ is a homogeneous compactum then $|X|\leq 2^{t(X)}$.
\end{theorem}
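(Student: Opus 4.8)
The plan is to combine the homogeneous cardinality bound of Theorem~\ref{density} with the two compactum-specific inputs, Theorems~\ref{arh2.2.4} and~\ref{pyt}, the whole point being to show that the density of $X$ is controlled by $2^{t(X)}$. Set $\kappa=t(X)$. Since $X$ is a compactum, the classical inequality $\pi\chi(X)\le t(X)$ valid in any compactum (\sapirovskii) gives $\pi\chi(X)\le\kappa$. By Theorem~\ref{density}, which applies because $X$ is homogeneous and Hausdorff, we get $|X|\le d(X)^{\pi\chi(X)}\le d(X)^{\kappa}$. Thus everything reduces to proving $d(X)\le 2^{\kappa}$: granting this, $|X|\le (2^{\kappa})^{\kappa}=2^{\kappa}=2^{t(X)}$.

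To bound the density I would first apply Theorem~\ref{arh2.2.4} to produce a non-empty $G_\kappa$-set $G$ and a set $H$ with $|H|\le 2^{\kappa}$ and $G\subseteq\overline{H}$. The idea is then to use homogeneity to ``smear'' the single small witnessing set $H$ across all of $X$, while using Pytkeev's theorem to keep the number of translates small. Concretely, fix $p\in G$; for each $x\in X$ choose a homeomorphism $h_x\colon X\to X$ with $h_x(p)=x$. Because a homeomorphism carries $G_\kappa$-sets to $G_\kappa$-sets, each $h_x[G]$ is a $G_\kappa$-set containing $x$, so $\{h_x[G]:x\in X\}$ is a cover of $X$ by $G_\kappa$-sets, that is, an open cover of the $G_\kappa$-modification $X_\kappa$.

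Now invoke Theorem~\ref{pyt}: since $\kappa=t(X)$ we have $L(X_\kappa)\le 2^{t(X)\cdot\kappa}=2^{\kappa}$, so the cover above admits a subcover $\{h_{x_i}[G]:i\in I\}$ with $|I|\le 2^{\kappa}$. Put $D=\bigcup_{i\in I}h_{x_i}[H]$. Each $h_{x_i}[G]\subseteq h_{x_i}[\overline{H}]=\overline{h_{x_i}[H]}\subseteq\overline{D}$, so $X=\bigcup_{i\in I}h_{x_i}[G]\subseteq\overline{D}$, whence $D$ is dense in $X$. Since $|D|\le |I|\cdot|H|\le 2^{\kappa}\cdot 2^{\kappa}=2^{\kappa}$, we conclude $d(X)\le 2^{\kappa}$, which closes the argument.

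The step I expect to be the main obstacle is precisely this density estimate, and specifically the interplay worked out in the middle paragraph: recognizing that the $G_\kappa$-set $G$ of Theorem~\ref{arh2.2.4} is an honest open set of $X_\kappa$, that homogeneity turns it into a cover of $X_\kappa$ whose members each sit inside the closure of a set of size $\le 2^{\kappa}$, and that Pytkeev's bound on $L(X_\kappa)$ is exactly what caps the number of pieces needed. Once this covering-and-smearing mechanism is in place, the remaining cardinal arithmetic and the appeal to Theorem~\ref{density} are routine.
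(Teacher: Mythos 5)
Your proposal is correct and follows essentially the same route as the paper's proof: apply Theorem~\ref{arh2.2.4} to get the pair $(G,H)$, use the homeomorphisms $h_x$ to cover $X$ by the $G_\kappa$-sets $h_x[G]$, invoke Pytkeev's Theorem~\ref{pyt} to extract a subcover of size at most $2^{t(X)}$, union the corresponding translates of $H$ to conclude $d(X)\leq 2^{t(X)}$, and finish with Theorem~\ref{density} together with \sapirovskii's inequality $\pi\chi(X)\leq t(X)$ for compacta. The only cosmetic difference is that you explicitly take the witnessing sets for the translates to be $h_x[H]$, whereas the paper phrases this as choosing a family $\{H_G\}$ indexed by the cover; the content is identical.
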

\begin{proof}(\cite{CR2012})
Let $\kappa=t(X)$. By Theorem \ref{arh2.2.4} there exists a non-empty $G_\kappa$-set contained in the closure of a set of size at most $\kappa$. Fix a point $p\in G$ and, as in previous proofs, we obtain homeomorphisms $h_x:X\to X$ such that $h_x(p)=x$ for all $x\in X$. $\scr{G}=\{h_x[G]:x\in X\}$ is a cover of $X$ consisting of $G_\kappa$-sets. There exists a family $\scr{H}=\{H_G:G\in\scr{G}\}$ such that $G\sse\cl{H_G}$ and $|H_G|\leq\kappa$ for all $G\in\scr{G}$.

By Pytkeev's Theorem \ref{pyt} there exists $\scr{G}^\prime\sse\scr{G}$ such that $\scr{G}^\prime$ covers $X$ and $|\scr{G}^\prime|\leq 2^\kappa$. It follows that $X=\Un\scr{G}^\prime\sse\Un_{G\in\scr{G}^\prime}\cl{H_G}\sse\cl{\Un_{G\in\scr{G}^\prime}H_G}$.
Thus, $H=\Un_{G\in\scr{G}^\prime}H_G$ is dense in $X$ and $|H|\leq 2^\kappa\cdot\kappa=2^\kappa$. Therefore $d(X)\leq 2^\kappa$. By Theorem~\ref{density} above and \sapirovskii's result that $\pichar{X}\leq t(X)$ for a compact space $X$, we have $|X|\leq d(X)^{\pichar{X}}\leq (2^\kappa)^\kappa=2^\kappa$.
\end{proof}

While much of the work in this proof is done by Theorem~\ref{pyt}, which itself is an elaborate closing-off argument, the homogeneity of the space is not utilized in~\ref{pyt}. Instead the homogeneity is applied in two straightforward and elegant ways: first by using Theorem \ref{arh2.2.4} and homeomorphisms to cover the space by non-empty $G_\kappa$-sets, and second through the use of Theorem~\ref{density}.

The compactness condition is necessary in de la Vega's Theorem. Indeed, it does not hold for all countably compact homogeneous spaces, nor all H-closed homogeneous spaces, as the next example from \cite{CPR2017} shows.

\begin{theorem}[C., Porter, Ridderbos \cite{CPR2017}, 2017]
There exists a countably compact, H-closed, Urysohn, separable, countably tight, homogeneous space $X$ such that $|X|=2^\mathfrak{c}$.
\end{theorem}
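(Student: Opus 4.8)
The plan is to build the space $X$ by hand, since the list of properties strongly constrains its type. The first thing I would record is a structural consequence of the results already in this survey: $X$ cannot be regular. A regular H-closed space is compact, so if $X$ were regular it would be a homogeneous compactum, and being countably tight, de la Vega's Theorem~\ref{DLV} would force $|X|\leq 2^{t(X)}=\mathfrak{c}$, contradicting $|X|=2^{\mathfrak{c}}$. Thus the target is a genuinely non-regular Urysohn space, and the construction must manufacture the failure of regularity deliberately while retaining Urysohn separation and H-closedness. This also pinpoints where the example escapes de la Vega: it is exactly the drop from regular (hence compact) to merely Urysohn (hence H-closed) that opens the room for the cardinality to jump from $\mathfrak{c}$ to $2^{\mathfrak{c}}$.

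For the raw material I would start from the Cantor cube $K=2^{\mathfrak{c}}$, which already supplies three of the desired features: it has cardinality $2^{\mathfrak{c}}$, it is homogeneous (being a topological group), and it is separable, as a product of $\mathfrak{c}=2^{\omega}$ separable factors (Hewitt--Marczewski--Pondiczery). Its one defect is tightness: a dyadic compactum is countably tight only when metrizable, so $K$ is not countably tight. The temptation is to repair this by passing to a countably tight dense subspace, but this is precisely what one cannot do, since a dense subspace of $K$ is Tychonoff and a Tychonoff H-closed space is compact, which runs back into de la Vega. The lesson is that the topology of $K$ must be \emph{altered}, not merely inherited: one refines the topology on a suitably symmetric dense set so that the space becomes Urysohn but non-regular and H-closed, using the transitive action of the translations of the group $K$ to carry the modification identically to every point and thereby preserve homogeneity.

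With the productive properties secured, the core of the argument is to verify the two invariants that are \emph{not} productive and do not come for free. H-closedness and the Urysohn property I would check by comparison with the compact Hausdorff topology sitting underneath the refinement: closures in the finer topology lie inside closures in $K$, so open covers reduce to finite subfamilies with dense union, and disjoint closed neighborhoods persist. Countable compactness I would establish by showing that every countable subset accumulates, again pushing the question down to the compact backbone $K$. The genuinely delicate steps are arranging countable tightness over an object built on $2^{\mathfrak{c}}$, which $K$ itself badly lacks, and doing so with enough symmetry that the full homeomorphism group still acts transitively.

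The hard part, then, is balancing countable tightness against everything else. Any refinement aggressive enough to force that points in the ($\theta$-)closure of a set already lie in the closure of a \emph{countable} subset risks breaking either countable compactness or homogeneity. I expect the decisive device to be a careful choice of the refining family, equivalently of the open filters whose limits are adjoined, chosen to be invariant under the group action so that homogeneity is automatic, yet fine enough to witness countable tightness and coarse enough that countable subsets still cluster. Once this single family is pinned down, the remaining verifications---Urysohn, H-closed, separable, countably compact, homogeneous, and $|X|=2^{\mathfrak{c}}$---should reduce to routine checks against $K$.
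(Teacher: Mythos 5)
Your structural analysis is on target: regularity must fail (regular $+$ H-closed would give a homogeneous compactum, and de la Vega's Theorem~\ref{DLV} would cap the cardinality at $\mathfrak{c}$), the Cantor cube $Y=2^{\mathfrak{c}}$ is the right raw material, and the modification must be canonical enough that the homeomorphism group of the cube survives intact. But the proposal stops exactly where the proof has to begin: you never exhibit the refinement, and you explicitly defer it (``the decisive device... a careful choice of the refining family''). That device is the entire content of the paper's construction: $X$ is the \emph{countable tightness modification} of $Y$, i.e.\ the same underlying set retopologized by declaring $\mathrm{cl}_X(A)=\bigcup\{\mathrm{cl}_Y(B):B\in[A]^{\leq\omega}\}$. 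This one definition resolves all of the ``delicate balancing'' you worry about: countable tightness holds by fiat; homogeneity is automatic because the new closure operator is defined canonically from $\mathrm{cl}_Y$, so every homeomorphism of $Y$ is a homeomorphism of $X$; countable sets have the same closure in both topologies, so separability and the clustering of countable sets (countable compactness) transfer directly from the compact cube; and the underlying set, hence the cardinality $2^{\mathfrak{c}}$, is untouched. Without naming this modification there is no proof, only a specification of what a proof should achieve.

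Beyond the missing construction, two of the verifications you call ``routine checks against $K$'' would fail as described. H-closedness is \emph{not} preserved when passing to a finer topology --- the finer topology has strictly more open covers to handle --- so one cannot ``reduce open covers to finite subfamilies with dense union'' by comparison with the compact topology underneath; the actual argument (Theorem 4.2 of \cite{CPR2017}) hinges on showing that $Y$ is the \emph{semiregularization} of $X$, together with the fact that H-closedness (and here the Urysohn property) depends only on the semiregularization, so that compactness of $X_s=Y$ yields H-closedness of $X$. Your parenthetical identification of the refinement with ``open filters whose limits are adjoined'' also points in the wrong direction: adjoining limits of open filters is the Kat\v{e}tov-extension technique, which enlarges the underlying set (destroying both homogeneity and the cardinality count), whereas a refinement removes convergence rather than adding limit points. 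So the skeleton matches the paper's proof, but the decisive idea --- the countable tightness modification and the semiregularization argument that keeps it H-closed --- is the gap.
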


\begin{proof}
Let $Y$ be the Cantor Cube $2^\mathfrak{c}$ with it usual topology and let $X$ be the countable tightness modification of $Y$. That is, the closure of a set $A$ in $X$ is given by 
$$cl_X(A)=\Un_{B\in[A]^{\leq\omega}}cl_Y(B).$$
$X$ has a finer topology than $Y$, demonstrating that $X$ is not compact as compact spaces are minimal Hausdorff. However, by Theorem 4.2 in \cite{CPR2017}, $X$ is countably compact, H-closed, countably tight, and separable. Furthermore, since $Y$ is the semiregularization of $X$, $X$ is also Urysohn. (See \cite{por88}).
\end{proof}

De la Vega's Theorem was extended to power homogeneous compacta in~\cite{avr07}.

\begin{theorem}[\arhangelskii, van Mill, and Ridderbos \cite{avr07}, 2007]
If $X$ is a power homogeneous compactum then $|X|\leq 2^{t(X)}$.
\end{theorem}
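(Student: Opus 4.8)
The plan is to reduce the problem to a density estimate and then adapt the proof of de la Vega's Theorem~\ref{DLV}. Since $X$ is a power homogeneous compactum it is in particular power homogeneous and Hausdorff, so Ridderbos's Theorem~\ref{ridHausdorff} applies and gives $|X|\leq d(X)^{\pi\chi(X)}$. By \sapirovskii's inequality $\pi\chi(X)\leq t(X)$ for compact $X$, this yields $|X|\leq d(X)^{t(X)}$. Hence it suffices to prove $d(X)\leq 2^{t(X)}$, for then $|X|\leq (2^{t(X)})^{t(X)}=2^{t(X)}$. I emphasize that we are forced to estimate the density at the level of $X$ with $\kappa:=t(X)$: one cannot simply apply de la Vega's Theorem to a homogeneous power $X^\lambda$, since tightness is not productive for compacta and $t(X^\lambda)$ may strictly exceed $t(X)$.

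To bound $d(X)$ I would mimic the Pytkeev-based proof of Theorem~\ref{DLV}. By \arhangelskii's Theorem~\ref{arh2.2.4} there is a non-empty $G_\kappa$-set $G\sse X$ and a set $H\sse X$ with $|H|\leq 2^\kappa$ and $G\sse\overline{H}$. In the genuinely homogeneous case one transports $G$ by the autohomeomorphisms $h_x$ of $X$ to obtain, for each $x\in X$, a $G_\kappa$-set $h_x[G]\ni x$ contained in the closure of the set $h_x[H]$ of size $\leq 2^\kappa$; Pytkeev's Theorem~\ref{pyt}, which gives $L(X_\kappa)\leq 2^{t(X)\cdot\kappa}=2^\kappa$, then extracts from the cover $\{h_x[G]:x\in X\}$ a subcover of size $\leq 2^\kappa$, and the union of the corresponding copies of $H$ is a dense set of size $\leq 2^\kappa$. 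The goal is to produce the analogous family $\{G_x:x\in X\}$ of $G_\kappa$-sets, with $x\in G_x$ and each $G_x$ contained in the closure of a set of size $\leq 2^\kappa$, using only power homogeneity.

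Here lies the main obstacle. Power homogeneity supplies autohomeomorphisms only of $X^\lambda$, not of $X$. Fixing a projection $\rho\colon X^\lambda\to X$ onto a single coordinate and a point $p\in X^\lambda$ with $\rho(p)\in G$, the natural move is to lift $G$ to the $G_\kappa$-set $\rho^{\leftarrow}[G]\sse X^\lambda$, translate it by a homeomorphism $h_x$ of $X^\lambda$ sending $p$ to a $\rho$-preimage of $x$, and project back down by $\rho$; this does cover $X$. The difficulty is that the projection of a $G_\kappa$-set need not be a $G_\kappa$-set, and the naive witness $\rho^{\leftarrow}[H]$ is far too large to project to a small set. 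Overcoming this is exactly the role of Ridderbos's refined homeomorphisms (Corollary~3.3 of \cite{Rid06}), which are constructed so as to interact well with $\rho$ and with local $\pi$-bases of $X$, allowing the translated $G_\kappa$-sets to be pushed down to $X$ while keeping their dense witnesses of size $\leq 2^\kappa$. Granting this transport, Pytkeev's Theorem~\ref{pyt} applied to $X_\kappa$ yields a subcover of size $\leq 2^\kappa$ whose associated witnesses union to a dense subset of $X$ of size at most $2^\kappa\cdot 2^\kappa=2^\kappa$. Thus $d(X)\leq 2^{t(X)}$, and combined with the reduction above, $|X|\leq 2^{t(X)}$.
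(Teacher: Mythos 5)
Your reduction is sound and is indeed the right first move: Theorem~\ref{ridHausdorff} gives $|X|\leq d(X)^{\pichar{X}}$ for power homogeneous Hausdorff spaces, \sapirovskii's inequality $\pichar{X}\leq t(X)$ for compacta turns this into $|X|\leq d(X)^{t(X)}$, and your warning that one cannot simply apply de la Vega's Theorem to a homogeneous power $X^\lambda$ is correct, since tightness can jump in infinite powers (the Cantor cube $2^{\omega_1}$ already has uncountable tightness). Note that the survey itself offers no proof of this theorem --- it only cites \cite{avr07} --- so your proposal has to be judged against what a complete argument would require.

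The genuine gap is the density estimate $d(X)\leq 2^{t(X)}$ for \emph{power} homogeneous compacta: this is not a lemma you may assume, it is essentially the entire content of \cite{avr07}. You diagnose the obstruction accurately (the homeomorphisms live on $X^\lambda$, the projection of a $G_\kappa$-set need not be a $G_\kappa$-set, and $\rho^{\leftarrow}[H]$ is far too large), but you then dispose of it with the phrase ``Granting this transport,'' attributing the resolution to Corollary~3.3 of \cite{Rid06}. That corollary does not do this job: as the survey itself describes, it produces homeomorphisms of the power whose compositions with projections behave well with respect to \emph{local $\pi$-bases}, i.e.\ with open sets, and it says nothing about pushing down $G_\kappa$-sets together with dense witnesses of size at most $2^\kappa$. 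Arranging that transport --- in effect, showing that every point of a power homogeneous compactum lies in a $G_\kappa$-set contained in the closure of a set of size at most $2^\kappa$, so that Theorem~\ref{pyt} can be applied as in the proof of Theorem~\ref{DLV} --- is precisely the new technical work of \cite{avr07}, and your proposal points at where that work must occur without carrying it out. As written, your argument establishes the theorem only in the homogeneous case, i.e.\ it reproves Theorem~\ref{DLV} rather than the stated result.
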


In 2018 \juhasz~and van Mill introduced new techniques and improved de la Vega's Theorem in the countable case. Considering  compact homogenous spaces that are $\sigma$-CT (a countable union of countably tight subspaces), they obtained the following two results.

\begin{theorem}[\juhasz, van Mill \cite{JVM2018}, 2018]\label{jvm}
If a compactum $X$ is the union of countably many dense countably tight subspaces and $X^\omega$ is homogeneous, then $|X|\leq\cont$.
\end{theorem}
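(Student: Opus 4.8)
The plan is to carry out the entire argument inside the \emph{homogeneous} compactum $Y=X^\omega$, using the $\sigma$-countably tight structure of $X$ only to feed two cardinal estimates for $Y$. Since $X$ is a continuous factor of $Y$ we have $|X|\leq|Y|$, so it suffices to bound $|Y|$, and for this I would invoke Theorem~\ref{density}: as $Y$ is a homogeneous compactum, $|Y|\leq d(Y)^{\pi\chi(Y)}$. Thus if I can show $d(Y)\leq\cont$ and $\pi\chi(Y)\leq\omega$, then $|X|\leq|Y|\leq\cont^{\,\omega}=\cont$ and the theorem follows.

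First I would establish $\pi\chi(Y)\leq\omega$. It is enough to produce a single point $p\in X$ with $\pi\chi(p,X)\leq\omega$: the finite-support products of a countable local $\pi$-base at $p$ form a countable local $\pi$-base at the diagonal point $(p,p,\dots)$ of $Y$, and since $Y$ is homogeneous, $\pi\chi(Y)$ equals the $\pi$-character at this single point, giving $\pi\chi(Y)\leq\omega$. To find such a $p$, I would fix one of the dense countably tight subspaces $X_n$ and run a \sapirovskii-type free-sequence argument inside $X_n$ to obtain a point $p\in X_n$ carrying a countable local $\pi$-base relative to $X_n$. Regularity of $X$ together with the density of $X_n$ then upgrades this to a countable local $\pi$-base at $p$ in $X$: fixing, for each member $B_k$ of the relative $\pi$-base, an open $U_k\subseteq X$ with $U_k\cap X_n=B_k$, one checks that $\{U_k\}$ is a local $\pi$-base at $p$ in $X$ by shrinking any neighbourhood $W$ of $p$ to $W'$ with $\overline{W'}\subseteq W$, capturing some $B_k\subseteq W'\cap X_n$, and using density to conclude $U_k\subseteq\overline{U_k\cap X_n}\subseteq\overline{W'}\subseteq W$.

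Next I would bound $d(Y)$ by imitating the proof of de la Vega's Theorem~\ref{DLV}. Using the $\sigma$-countably tight decomposition of $X$ I would prove a countable analogue of Theorem~\ref{arh2.2.4}: there is a nonempty $G_\delta$-set $G\subseteq X$ with $G\subseteq\overline{C}$ for some countable $C$. Passing to $Y$, the set $G^\omega$ is a nonempty $G_\delta$-subset of $Y$ contained in $\overline{C^\omega}$, where $|C^\omega|\leq\cont$. Fixing $p\in G^\omega$ and homeomorphisms $h_y\colon Y\to Y$ with $h_y(p)=y$ for each $y\in Y$, the family $\{h_y[G^\omega]:y\in Y\}$ is a cover of $Y$ by nonempty $G_\delta$-sets, each contained in the closure of a set of size at most $\cont$. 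A subcover of size at most $\cont$ would yield a dense subset of $Y$ of size at most $\cont\cdot\cont=\cont$, so that $d(Y)\leq\cont$.

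The main obstacle is extracting that subcover, i.e. controlling the Lindel\"of degree of the $G_\delta$-modification $Y_\omega$. Pytkeev's Theorem~\ref{pyt} gives only $L(Y_\omega)\leq 2^{t(Y)}$, which is useless here: $X$, and a fortiori the power $Y=X^\omega$, need not be countably tight, so $t(Y)$ may be large. The real work is therefore a $\sigma$-countably tight strengthening of Pytkeev's theorem establishing $L(Y_\omega)\leq\cont$, in which the tightness-driven closing-off argument behind Theorem~\ref{pyt} is replaced by one driven by the countable decomposition $X=\bigcup_n X_n$ and robust enough to survive the passage to the countable power. Granting this covering result, the two estimates combine through Theorem~\ref{density} to give $|X|\leq\cont$.
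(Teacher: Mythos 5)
Your reduction itself (bound $|Y|$ for $Y=X^\omega$ via Theorem~\ref{density}, so that it suffices to prove $d(Y)\leq\cont$ and $\pi\chi(Y)\leq\omega$) is sound, but the step you give for $\pi\chi(Y)\leq\omega$ is false, not merely sketchy. A \sapirovskii-type free-sequence argument requires compactness of the space in which it is run, and $X_n$ is only a dense subspace; a dense countably tight subspace of a compactum yields no point of countable $\pi$-character, relative or absolute. Concretely, let $X=2^{\cont}$, which is separable by Hewitt--Marczewski--Pondiczery, and let every $X_n$ be a fixed countable dense subset $D$: then $D$ is dense and (being countable) countably tight, yet every point of $X$ has $\pi$-character exactly $\cont$ (any countable family of nonempty basic open sets has countable total support, so some coordinate is free, and the subbasic neighborhood determined by that coordinate contains no member of the family). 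Since your upgrade from a relative $\pi$-base in $X_n$ to a $\pi$-base in $X$ is correct, it is the existence claim --- a point of $X_n$ of countable relative $\pi$-character --- that must fail, and it does. Note that $2^{\cont}$ is a homogeneous compactum with $(2^{\cont})^\omega\cong 2^{\cont}$ homogeneous, so the only hypothesis of the theorem this example lacks is that the dense countably tight pieces \emph{cover} $X$, a hypothesis your argument for $\pi\chi(Y)\leq\omega$ never invokes; the step is therefore unfixable as stated. Indeed, under the full hypotheses the inequality $\pi\chi(X)\leq\omega$ is only known \emph{post hoc}, as a consequence of the conclusion $|X|\leq\cont$ (via \v{C}ech--Posp\'{\i}\v{s}il and homogeneity), so any proof routed through it risks circularity; the difficulty of extracting $\pi$-character bounds from tightness-type hypotheses on pieces is precisely why $\pi\chi(X)$ must be carried along in Theorem~\ref{cpthomog} and why the question of \cite{BC2020a}, whether every homogeneous compactum satisfies $|X|\leq 2^{wt(X)}$, remains open.

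Beyond this, the other two pillars of your outline are deferred rather than proved, and they are where all the work lies. The ``countable analogue of Theorem~\ref{arh2.2.4}'' that you say you would prove is exactly Theorem~\ref{subseparable} of \juhasz~and van Mill, which this survey singles out as the crucial ingredient with a ``deep and sophisticated proof''; it is not a routine adaptation of \arhangelskii's argument. And the covering result you explicitly grant, $L(Y_\omega)\leq\cont$ for $Y=X^\omega$, is not available from anything known: Pytkeev's Theorem~\ref{pyt} needs control of $t(Y)$, the improvement Theorem~\ref{pytimprove} needs $wt(Y)\leq\omega$, and neither countable tightness nor the dense-CT decomposition of $X$ transfers to the countable power (tightness is not provably productive, even for compacta, and products of the pieces $X_{n}$ need not be countably tight, let alone cover $Y$). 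The proofs that actually exist --- \juhasz--van Mill's, and the power homogeneous generalization Theorem~\ref{phctblytight} --- keep the Pytkeev/weak-tightness machinery on $X$ itself and exploit the homogeneity of $X^\omega$ through special homeomorphisms and projection arguments, rather than attempting to bound $d$ and $\pi\chi$ of the full power. After removing the false step and the granted steps, what remains of your proposal is only the easy bookkeeping.
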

\begin{theorem}[\juhasz, van Mill \cite{JVM2018}, 2018]\label{finite}
If $X$ is an infinite homogeneous compactum that is the union of finitely many countably tight subspaces, then $|X|\leq\cont$.
\end{theorem}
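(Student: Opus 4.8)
The plan is to reduce everything to a bound on the tightness and then invoke de la Vega's Theorem~\ref{DLV}. Precisely, I would prove that a homogeneous compactum $X$ which is the union of finitely many countably tight subspaces must itself satisfy $t(X)=\omega$; once this is established, Theorem~\ref{DLV} gives $|X|\leq 2^{t(X)}=2^{\omega}=\cont$ at once. Thus the entire burden of the argument is to show that a finite countably tight cover, in the presence of homogeneity, collapses the tightness of $X$ to $\omega$.

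To bound the tightness I would argue by contradiction using free sequences. Since $X$ is compact, $t(X)$ equals the supremum of the lengths of free sequences in $X$ (\arhangelskii), so if $t(X)>\omega$ there is a free sequence $\langle x_\alpha:\alpha<\omega_1\rangle$. Writing $X=\bigcup_{i<n}X_i$ with each $X_i$ countably tight, and using that $\omega_1$ is regular while $n$ is finite, a pigeonhole argument yields an uncountable set of indices on which the sequence lies inside a single piece $X_{i_0}$; the resulting subsequence is again free, so after re-enumerating I may assume $\{x_\alpha:\alpha<\omega_1\}\subseteq X_{i_0}$. By compactness the decreasing family of closed sets $\overline{\{x_\alpha:\alpha\geq\beta\}}$ has nonempty intersection $P$, and freeness forces $P\cap\overline{\{x_\alpha:\alpha<\beta\}}=\emptyset$ for every $\beta<\omega_1$; one checks that every neighborhood of every point of $P$ meets the sequence in an uncountable set.

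The decisive point is to locate an accumulation point $p\in P$ lying in the \emph{same} piece $X_{i_0}$ that carries the sequence. If such a $p$ exists, then, because $\{x_\alpha\}\subseteq X_{i_0}$, the point $p$ is a closure point of $\{x_\alpha\}$ computed inside the countably tight subspace $X_{i_0}$, so there is a countable $B\subseteq\{x_\alpha\}$ with $p\in\overline{B}$; regularity of $\omega_1$ makes $B$ bounded, whence $p\in\overline{\{x_\alpha:\alpha<\beta\}}$ for some $\beta<\omega_1$, contradicting $p\in\overline{\{x_\alpha:\alpha\geq\beta\}}$ and freeness. Hence the proof reduces to forcing some accumulation point into $X_{i_0}$. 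My approach would be to induct on the number $n$ of pieces: if $P$ meets $X_{i_0}$ we are finished, and otherwise $P$ is a compactum covered by the $n-1$ countably tight subspaces $\{P\cap X_i:i\neq i_0\}$, to which I would apply the inductive hypothesis after using the homogeneity of $X$ to transport the free sequence and its accumulation set by autohomeomorphisms.

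I expect the placement of the accumulation point to be the main obstacle, and it is exactly here that homogeneity must enter in an essential way. The difficulty is structural: a union, or a closure, of countably tight subspaces need not be countably tight, so there is no a~priori reason for $P$ to meet $X_{i_0}$, and a single autohomeomorphism that drags some $p\in P$ into $X_{i_0}$ simultaneously drags the free sequence out of $X_{i_0}$ into the image piece, destroying the coincidence the contradiction requires. Overcoming this, by coordinating the homeomorphisms so that the sequence and one of its accumulation points come to rest in a common countably tight subspace, is the heart of the matter; the remaining ingredients, namely the pigeonhole, the compactness accumulation, and the final appeal to Theorem~\ref{DLV}, are routine.
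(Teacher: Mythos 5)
Your reduction target is legitimate: since autohomeomorphisms preserve tightness at a point, exhibiting a \emph{single} point of countable tightness would give $t(X)=\omega$ by homogeneity, and Theorem~\ref{DLV} would then yield $|X|\leq\cont$; your free-sequence setup (the pigeonhole onto one piece, the nonemptiness of $P=\bigcap_{\beta<\omega_1}\overline{\{x_\alpha:\alpha\geq\beta\}}$, and the contradiction in the case $P\cap X_{i_0}\neq\varnothing$) is also correct. But there is a genuine gap exactly where you acknowledge one, and the inductive scheme you sketch cannot close it. First, your inductive hypothesis is a statement about \emph{homogeneous} compacta, whereas $P$ is merely a closed subspace of $X$; nothing makes $P$ homogeneous, so the hypothesis does not apply to it. Second, if you retreat to a homogeneity-free induction (say, ``every compactum covered by $n-1$ countably tight subspaces has a point of countable tightness''), the conclusion concerns tightness computed \emph{in the subspace} $P$, which does not lift to $X$: indeed, your own freeness argument shows that \emph{every} $q\in P$ has $t(q,X)\geq\omega_1$ (a countable subset of the sequence lies in a bounded initial segment, whose closure misses $q$ by freeness), so learning that some $q\in P$ is a point of countable tightness of $P$ yields no contradiction at all. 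The space $\omega_1+1=[0,\omega_1)\cup\{\omega_1\}$ shows this configuration genuinely occurs: the free sequence of successor ordinals lies in the countably tight piece $[0,\omega_1)$, while $P=\{\omega_1\}$ lies entirely in the other piece and is trivially countably tight in itself, yet $t(\omega_1,\omega_1+1)=\omega_1$. Homogeneity is what must exclude this picture, and ``transporting the sequence and its accumulation set by autohomeomorphisms'' is a restatement of the unsolved coordination problem you describe, not a proof step.

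For comparison with the source: this survey does not prove Theorem~\ref{finite} at all; it cites \cite{JVM2018} and explicitly identifies Theorem~\ref{subseparable} (every $\sigma$-CT compactum has a nonempty subseparable $G_\delta$-set) as the crucial ingredient, describing its proof as deep and sophisticated. The obstacle you ran into is not incidental to your setup; it is the heart of the theorem, and it is precisely what that machinery exists to overcome. A pigeonhole-plus-free-sequence argument of the sort you propose would, if completed, bypass the subseparable-$G_\delta$ theorem entirely, which is strong evidence that the missing step cannot be supplied by elementary bookkeeping with autohomeomorphisms. To repair the write-up honestly, you would either have to import Theorem~\ref{subseparable} (together with an argument converting a subseparable $G_\delta$-set into the point of countable tightness, or into a density bound in the style of the proof of Theorem~\ref{DLV} given in this survey) or find a genuinely new mechanism; as it stands, the proposal proves only the easy implications surrounding the core difficulty.
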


A crucial ingredient in these theorems was a strengthening of \arhangelskii's Theorem~\ref{arh2.2.4} in the countable case. (Recall  Theorem~\ref{arh2.2.4} played a central role in proving de la Vega's Theorem). The strengthening has a deep and sophisticated proof. A subset of a space is \emph{subseparable} if it is contained in the closure of a countable set.

\begin{theorem}[\juhasz, van Mill \cite{JVM2018}, 2018]\label{subseparable}
Every $\sigma$-CT compactum $X$ has a non-empty subseparable $G_\delta$-set.
\end{theorem}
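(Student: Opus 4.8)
The plan is to argue by contradiction: assume that $X$ has no nonempty subseparable $G_\delta$-set, and from this manufacture a free sequence of length $\omega_1$ that is trapped, together with one of its complete accumulation points, inside a single countably tight piece $X_n$, contradicting $t(X_n)=\omega$. Recall that a free sequence $\langle y_\eta:\eta<\lambda\rangle$ in $X$ is one for which $\overline{\{y_\eta:\eta<\beta\}}\cap\overline{\{y_\eta:\eta\geq\beta\}}=\varnothing$ for every $\beta<\lambda$. The guiding principle is that the failure of subseparability is exactly what should let the recursion continue past every countable stage.

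First I would set up the recursion. Fix the decomposition $X=\bigcup_n X_n$ with each $X_n$ countably tight. Using normality of the compactum $X$, I would build simultaneously the points $y_\eta$ and a decreasing sequence of open sets $W_\eta$ with $\overline{W_{\eta+1}}\subseteq W_\eta$, arranged so that $y_\beta\in W_\eta$ whenever $\beta\geq\eta$ and so that $\overline{\{y_\eta:\eta<\beta\}}$ is disjoint from $\overline{W_\beta}$; this is what keeps the sequence free. The heart of the construction is the continuation step, which splits into a dichotomy. To choose $y_\lambda$ at a limit stage $\lambda<\omega_1$, consider $G_\lambda=\bigcap_{\alpha<\lambda}W_\alpha$; since $\overline{W_{\alpha+1}}\subseteq W_\alpha$ this equals $\bigcap_{\alpha<\lambda}\overline{W_\alpha}$, so by compactness it is a nonempty $G_\delta$-set. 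Either $G_\lambda$ meets the complement of $\overline{\{y_\eta:\eta<\lambda\}}$, in which case I pick $y_\lambda$ there and separate it off to define $W_\lambda$, or $G_\lambda\subseteq\overline{\{y_\eta:\eta<\lambda\}}$, in which case $G_\lambda$ is a nonempty $G_\delta$-set contained in the closure of the countable set $\{y_\eta:\eta<\lambda\}$, i.e.\ a nonempty subseparable $G_\delta$-set, contradicting our assumption. The successor step is the same dichotomy with the open (hence $G_\delta$) set $W_\alpha$ in place of $G_\lambda$. Thus under the contradiction hypothesis the recursion never halts and produces a free sequence $\langle y_\eta:\eta<\omega_1\rangle$ in $X$.

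Next I would extract the contradiction. Since $\omega_1=\bigcup_n\{\eta:y_\eta\in X_n\}$, some piece $X_{n}$ contains $\{y_\eta:\eta\in S\}$ for an uncountable $S\subseteq\omega_1$, and the restricted sequence $\langle y_\eta:\eta\in S\rangle$ is again free and lies in $X_{n}$. Let $q$ be a complete accumulation point of this uncountable set, which exists by compactness. Because every countable subset of $S$ is bounded in $\omega_1$, freeness forces $q\notin\overline{\{y_\eta:\eta\in B\}}$ for each countable $B\subseteq S$, while $q\in\overline{\{y_\eta:\eta\in S\}}$. If $q$ belonged to $X_{n}$, this would exhibit a point in the closure, taken in $X_n$, of a subset of $X_{n}$ that lies in the closure of none of its countable subsets, contradicting $t(X_{n})=\omega$ and completing the proof.

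The main obstacle is precisely the clause ``if $q$ belonged to $X_{n}$''. A complete accumulation point of the free sequence lies in some piece, but there is no reason for that piece to be the one containing the bulk of the sequence; and one cannot simply bound the length of a free sequence by the tightness of $X_{n}$, since that inequality holds only on compacta and fails badly in noncompact spaces, a discrete space having free sequences of every length yet countable tightness. Overcoming this requires enriching the recursion so that the choice of each $y_\eta$ is coordinated with the pieces, most naturally by running it along a continuous increasing chain $\langle M_\xi:\xi<\omega_1\rangle$ of countable elementary submodels containing $X$ and the decomposition, and using elementarity to reflect a complete accumulation point into the same piece as a cofinal part of the sequence. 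It is this interplay between the $\sigma$-decomposition, compactness, and elementary reflection that constitutes the depth of the theorem; the transfer principle that, inside a countably tight piece $X_n$, the trace $\overline{A}\cap M$ of an $M$-definable set $A\subseteq X_n$ is contained in $\overline{A\cap M}$ is the mechanism that keeps the reflected data countable.
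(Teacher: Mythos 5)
Two remarks before the main point. First, the survey contains no proof of this theorem to compare against: it is quoted from Juh\'asz and van Mill \cite{JVM2018}, with the explicit remark that the proof is ``deep and sophisticated.'' Second, the first half of your plan is essentially sound, modulo a repair: you should carry along a decreasing sequence of non-empty \emph{closed $G_\delta$-sets} $G_\beta$ rather than nested open sets $W_\eta$ with $\overline{W_{\eta+1}}\subseteq W_\eta$. At a limit $\lambda$ the set $\bigcap_{\alpha<\lambda}W_\alpha$ may have empty interior, so there need be no open $W_\lambda$ with $\overline{W_\lambda}\subseteq\bigcap_{\alpha<\lambda}W_\alpha$ and the recursion as you set it up stalls; with closed $G_\delta$-sets, regularity of the compactum gives the successor step, compactness gives the limit step, and your dichotomy (either the current $G_\delta$-set is contained in the closure of the countable set of points chosen so far, hence is subseparable, or a new point can be split off) does produce, under the hypothesis that no non-empty subseparable $G_\delta$-set exists, a free sequence of length $\omega_1$.

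The endgame, however, is a genuine gap, and it is worse than the ``obstacle'' you describe: it cannot be closed by the elementary-submodel reflection you propose, because the configuration you are trying to rule out actually occurs in $\sigma$-CT compacta. Take $X=[0,\omega_1]$, decomposed as $X_1=[0,\omega_1)$ and $X_2=\{\omega_1\}$. The subspace $[0,\omega_1)$ is first countable, hence countably tight, so $X$ is a $\sigma$-CT (indeed $2$-CT) compactum. The successor ordinals $\langle\alpha+1:\alpha<\omega_1\rangle$ form a free sequence of length $\omega_1$ lying entirely inside the countably tight piece $X_1$, and its unique complete accumulation point is $\omega_1\notin X_1$. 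So ``$\sigma$-CT plus an uncountable free sequence trapped in one countably tight piece'' is simply not contradictory, and no reflection mechanism can force a complete accumulation point into the same piece --- in this example there is nothing to reflect to. Consequently, any correct proof must invoke the failure of subseparability \emph{again} at the final stage (note that in $[0,\omega_1]$ your recursion halts immediately, since $\{0\}$ is a subseparable clopen $G_\delta$-set), rather than draw the contradiction from the free sequence and $\sigma$-CT alone; reducing the theorem to a bound on free-sequence lengths in $\sigma$-CT compacta is provably hopeless. This is precisely why the Juh\'asz--van Mill argument is substantially more involved than the sketch here, and why the survey cites it rather than reproducing it.
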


Soon afterwards the ``$X^\omega$ is homogeneous'' condition in Theorem~\ref{jvm} was generalized to ``$X$ is power homogeneous'' in \cite{Carlson2018}.

\begin{theorem}[C. \cite{Carlson2018}, 2018]\label{phctblytight}
If a power homogeneous compactum $X$ is the union of countably many dense countably tight subspaces, then $|X|\leq\cont$.
\end{theorem}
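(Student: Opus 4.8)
The plan is to run the power-homogeneous analogue of the proof of de la Vega's Theorem (Theorem~\ref{DLV}), replacing \arhangelskii's Theorem~\ref{arh2.2.4} by the \juhasz--van Mill Theorem~\ref{subseparable} so that the sets covering $X$ carry \emph{countable} witnesses rather than witnesses of size $2^{t(X)}$. Fix a cardinal $\kappa$ with $X^\kappa$ homogeneous. The final bound will come from Theorem~\ref{ridHausdorff}, namely $|X|\le d(X)^{\pichar{X}}$; so it suffices to establish the two facts $d(X)\le\cont$ and $\pichar{X}=\omega$, and then $|X|\le\cont^\omega=\cont$. Note that routing the $\pi$-character through tightness (as de la Vega does, via \sapirovskii's $\pichar{X}\le t(X)$) is not available here, since for these spaces $t(X)$ need not be countable; were $t(X)=\omega$, the theorem would already follow from the \arhangelskii--van Mill--Ridderbos bound $|X|\le 2^{t(X)}$ for power homogeneous compacta, so the work must avoid that shortcut.

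First I would obtain $d(X)\le\cont$, which is the main engine and mirrors Theorem~\ref{DLV}. By Theorem~\ref{subseparable} the $\sigma$-CT compactum $X$ has a non-empty subseparable $G_\delta$-set $G$, so there is a countable $D_0\sse X$ with $G\sse\overline{D_0}$. In the homogeneous case (Theorem~\ref{DLV}, and Theorem~\ref{jvm} when $X^\omega$ is homogeneous) one transports $G$ by the homeomorphisms $h_x$ to cover $X$ by the subseparable $G_\delta$-sets $h_x[G]$. Here $X$ is only power homogeneous, and the essential new step is to manufacture, for each $x\in X$, a subseparable $G_\delta$-set $G_x$ of $X$ with $x\in G_x$. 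This is where I would invoke the refined homeomorphisms of $X^\kappa$ produced by Ridderbos (Corollary~3.3 of \cite{Rid06}) together with the projection $\pi:X^\kappa\to X$: the naive preimage $\pi^\leftarrow[G]$ is a $G_\delta$ in $X^\kappa$ but is \emph{not} subseparable there, so one must use homeomorphisms of $X^\kappa$ compatible with $\pi$ in order to push a subseparable $G_\delta$-set down to each point of $X$. Granting the cover $X=\Un_{x\in X}G_x$, I would reduce it to a subcover of size at most $\cont$ using the $\sigma$-CT covering principle underlying Theorem~\ref{jvm} (the analogue of Pytkeev's Theorem~\ref{pyt}, which cannot be applied directly because $2^{t(X)}$ may exceed $\cont$). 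Taking the union of the countably many witnesses $D_i$ of the $\le\cont$ selected sets gives a set $D$ with $X\sse\Un_i\overline{D_i}\sse\overline{D}$ and $|D|\le\cont\cdot\omega=\cont$, so $D$ is dense and $d(X)\le\cont$.

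Next I would verify $\pichar{X}=\omega$. The key claim is that the subseparable $G_\delta$-structure yields a point of countable $\pi$-character: localizing inside the separable compactum $\overline{D_0}$, one shrinks $G$ to a non-empty closed $G_\delta$-set, exploits that a closed $G_\delta$ in a compactum has countable outer character, and produces a point whose local $\pi$-character is countable. Power homogeneity then forces the $\pi$-character to be the same at every point of $X$ (this uniformity across points is exactly the kind of statement extracted from the local $\pi$-base analysis in $X^\kappa$ in \cite{Rid06}), so $\pichar{X}=\omega$. With $d(X)\le\cont$ in hand, Theorem~\ref{ridHausdorff} gives $|X|\le d(X)^{\pichar{X}}\le\cont^\omega=\cont$.

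The hard part will be the power-homogeneous descent in the second paragraph: covering $X$ by subseparable $G_\delta$-sets through every point. Both the $G_\delta$ property and subseparability are destroyed by projections, so the simple translation $h_x[G]$ of the homogeneous case is unavailable and must be replaced by the delicate $\pi$-compatible homeomorphisms of $X^\kappa$ that drive Ridderbos's proof of Theorem~\ref{ridHausdorff}. Two further technical points deserve care: establishing the cover-reduction bound to $\cont$ in the $\sigma$-CT setting (the analogue of Pytkeev's theorem, where the countably-tight-dense decomposition, not the tightness of $X$, is what must be used), and confirming that $\pichar{X}$ is countable even though $t(X)$ may be uncountable.
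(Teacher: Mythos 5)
The survey states Theorem~\ref{phctblytight} with a citation to \cite{Carlson2018} and gives no proof, so your plan can only be compared against the route indicated by the machinery the survey attributes to that paper: the weak tightness $wt(X)$, Proposition~\ref{wteasy}, Theorem~\ref{pytimprove}, and the power homogeneous tools of \cite{Rid06} and \cite{CPR2012}. Your overall skeleton --- prove $d(X)\leq\cont$, prove $\pichar{X}=\omega$, finish with Theorem~\ref{ridHausdorff} --- is the right shape, and you are right that Theorems~\ref{arh2.2.4} and \ref{pyt} cannot be used directly because $t(X)$ may be uncountable. But the two steps you defer are where the entire content of the theorem lies, and the one step you do argue in detail is incorrect. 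You claim that a non-empty closed $G_\delta$-set with countable outer character, sitting inside the separable compactum $\overline{D_0}$, ``produces a point whose local $\pi$-character is countable.'' This inference is false: take $X=2^{\cont}$, a separable compactum, so $X=\overline{D_0}$ for some countable $D_0$; the slice $F=\{x\in 2^{\cont}: x_n=0 \textup{ for all } n<\omega\}$ is a non-empty closed $G_\delta$ with $\chi(F,X)\leq\omega$, yet \emph{every} point of $2^{\cont}$ has $\pi$-character $\cont$. Since $2^{\cont}$ is moreover compact, homogeneous, and of cardinality $2^{\cont}>\cont$, no argument that uses only subseparability, compactness and (power) homogeneity --- which is all this step of your sketch uses --- can produce the required point; the dense countably tight pieces must enter essentially here (e.g.\ via \sapirovskii's $\pi\chi\leq t$ applied to a compact countably tight set, or a free-sequence argument), and that is genuine work. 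Your appeal to Ridderbos's uniformity of $\pi$-character in power homogeneous spaces is legitimate; it is the existence of a single point of countable $\pi$-character that remains unproven.

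The covering step is likewise assumed rather than proven, and the mechanism you point to does not resolve it. Pushing $\pi_0^{\leftarrow}[G]$ (with $G$ taken compact $G_\delta$) through a homeomorphism of $X^\kappa$ and projecting back preserves compactness and countable outer character but destroys subseparability, because $\pi_0^{\leftarrow}[G]\sse\overline{D_0\times X^{\kappa\minus\{0\}}}$ is only the closure of an uncountable set; pushing instead the copy of $G$ lying in a single slice preserves subseparability but ruins the outer-character/$G_\delta$ structure. Obtaining both properties simultaneously at every point of $X$ is precisely the obstruction, and Corollary 3.3 of \cite{Rid06} by itself does not supply it. The published route sidesteps the transport of subseparable sets altogether: one first secures the $\pi$-character, then gets $wt(X)=\omega$ from Proposition~\ref{wteasy} (note the circularity you have not addressed: \ref{wteasy} needs $\pi\chi(X)\leq\cont$ or $t(X)\leq\cont$, so the $\pi$-character step must come \emph{before} any Pytkeev-type extraction, not in parallel with it), then applies Theorem~\ref{pytimprove} to get $L(X_\omega)\leq\cont$, and runs the power homogeneous machinery at the level of compact sets of countable outer character together with saturation relative to the cover (compare Theorem~\ref{variation}). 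So what you have is the correct skeleton with both load-bearing steps missing, and the one concrete argument offered for filling them is refuted by $2^{\cont}$.
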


Motivated by the results of \juhasz~and van Mill, the author introduced a cardinal invariant known as $wt(X)$, the weak tightness, in \cite{Carlson2018}. To define it we need the notion of the \emph{$\kappa$-closure} $cl_\kappa A$ of a set $A$ in a space $X$ for a cardinal $\kappa$. This is defined by $cl_\kappa A=\Un\{\overline{B}:B\in [A]^{\leq\kappa}\}$. The \emph{weak tightness} $wt(X)$ of $X$ is defined as the least infinite cardinal $\kappa$ for which there is a cover $\scr{C}$ of $X$ such that $|\scr{C}|\leq 2^\kappa$ and for all $C\in\scr{C}$, $t(C)\leq\kappa$ and $X=cl_{2^\kappa}C$. We say that $X$ is \emph{weakly countably tight} if $wt(X)=\omega$. It is clear that $wt(X)\leq t(X)$. Example 2.3 in \cite{BC2020a} provides a straightforward example of a compact group of tightness $\omega_1$ such that, under $2^{\omega}=2^{\omega_1}$, $X$ is weakly countably tight.

The condition ``$X=cl_{2^\kappa}C$'' in the above definition can be difficult to work with. The next proposition gives additional conditions under which this condition can be relaxed to ``$C$ is dense in $X$''.

\begin{proposition}[\cite{Carlson2018}]\label{wteasy}
Let $X$ be a space, $\kappa$ a cardinal, and $\scr{C}$ a cover of $X$ such that $|\scr{C}|\leq 2^\kappa$, and for all $C\in\scr{C}$, $t(C)\leq\kappa$ and $C$ is dense in $X$. If $t(X)\leq 2^\kappa$ or $\pi_\chi(X)\leq 2^\kappa$ then $wt(X)\leq\kappa$.
\end{proposition}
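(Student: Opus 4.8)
The plan is to observe that the cover $\scr{C}$ given in the hypothesis already satisfies all but one of the requirements in the definition of weak tightness needed to witness $wt(X)\leq\kappa$: we are given $|\scr{C}|\leq 2^\kappa$ and $t(C)\leq\kappa$ for every $C\in\scr{C}$. The only remaining clause to verify is that $X=cl_{2^\kappa}C$ for each $C\in\scr{C}$. Thus the whole proof reduces to upgrading the density of each $C$ to this stronger $\kappa$-closure condition, i.e.\ to showing that every $x\in X$ belongs to $\cl{B}$ for some $B\in[C]^{\leq 2^\kappa}$. Once this is established, $\scr{C}$ itself serves as the witnessing cover and $wt(X)\leq\kappa$ follows immediately.

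First I would dispatch the case $t(X)\leq 2^\kappa$. Here the argument is essentially a one-liner: since $C$ is dense we have $x\in\cl{C}$ for every $x\in X$, and the tightness bound $t(X)\leq 2^\kappa$ then produces $B\in[C]^{\leq 2^\kappa}$ with $x\in\cl{B}$, so $x\in cl_{2^\kappa}C$ and hence $X=cl_{2^\kappa}C$.

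Next I would treat the case $\pichar{X}\leq 2^\kappa$, which requires a little more work but no new ideas. For each $x\in X$ I would fix a local $\pi$-base $\scr{B}_x$ at $x$ with $|\scr{B}_x|\leq 2^\kappa$, and, using density of $C$, choose a point $d(V)\in V\meet C$ for every $V\in\scr{B}_x$. Setting $B_x=\{d(V):V\in\scr{B}_x\}\sse C$ gives $|B_x|\leq 2^\kappa$, and for any open neighborhood $U$ of $x$ there is some $V\in\scr{B}_x$ with $V\sse U$, so $d(V)\in U\meet B_x$; this shows $x\in\cl{B_x}$ and again $x\in cl_{2^\kappa}C$. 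In either case $X=cl_{2^\kappa}C$ for every $C\in\scr{C}$.

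I do not expect any genuine obstacle in this argument; the entire content lies in recognizing that the definition of $wt$ demands only the $\kappa$-closure clause beyond what the hypotheses already supply, and that each of the two alternative assumptions furnishes an easy route to it. The slightly more delicate of the two will be the $\pi$-character case, since it uses the interaction between the local $\pi$-base and the density of $C$ rather than a direct appeal to a convergence-type cardinal function.
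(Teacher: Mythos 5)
Your proof is correct. The survey states this proposition without proof (it is cited to \cite{Carlson2018}), and your argument—keeping the given cover $\scr{C}$ and upgrading the density of each $C$ to the required condition $X=cl_{2^\kappa}C$, via tightness when $t(X)\leq 2^\kappa$ and via a point-selection from a local $\pi$-base of size at most $2^\kappa$ when $\pi\chi(X)\leq 2^\kappa$—is precisely the natural verification of the definition of $wt(X)$ and is the argument of the cited source.
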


Pytkeev's Theorem~\ref{pyt} has an improvement using $wt(X)$.

\begin{theorem}[C. \cite{Carlson2018}]\label{pytimprove}
Let $X$ be a compactum and $\kappa$ an infinite cardinal. Then $L(X_\kappa)\leq 2^{wt(X)\cdot\kappa}$.
\end{theorem}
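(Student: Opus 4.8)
The plan is to prove the equivalent covering formulation: every cover of $X$ by $G_\kappa$-sets has a subcover of cardinality at most $2^\lambda$, where $\lambda=wt(X)\cdot\kappa$. Write $\mu=wt(X)$ and fix a cover $\scr{C}$ witnessing the weak tightness, so $|\scr{C}|\leq 2^\mu$ and every $C\in\scr{C}$ satisfies $t(C)\leq\mu$ and $X=cl_{2^\mu}C$. A first, free, observation is that each such $C$ is in fact dense, since $cl_{2^\mu}C\subseteq\overline{C}$ forces $\overline{C}=X$; thus the members of $\scr{C}$ are dense subspaces of the compactum $X$ whose tightness is controlled by $\mu$, and the role of $\scr{C}$ is to let one draw \emph{small} witnesses for closures even though $t(X)$ itself may be large. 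The target bound is compatible with the size of $\scr{C}$: one has $2^\mu\cdot 2^{\mu\kappa}=2^{\mu\kappa}=2^\lambda$, so the $2^\mu$ pieces can each be allowed to contribute a subfamily of size $2^{\mu\kappa}$ without exceeding $2^\lambda$.

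The approach I would take is to mirror the elaborate closing-off argument that proves Pytkeev's Theorem~\ref{pyt}, localizing its single use of global tightness. In that argument the hypothesis $t(X)\leq\tau$ is invoked at a definite point, to replace a point lying in the closure of a set by a witnessing subset of size $\leq\tau$; the exponent $2^{\tau\kappa}$ in the conclusion records the number of the resulting canonical $G_\kappa$-configurations that the closing-off must enumerate. I would substitute, at exactly that step, the weak-tightness data: route the relevant point through a member $C\in\scr{C}$ meeting the region in question, use $t(C)\leq\mu$ to obtain a witness of size $\leq\mu$ for points of $C$, and use $X=cl_{2^\mu}C$ together with the compactness of $X$ to reach the points of $X$ lying outside $C$. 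With the local tightness dropped from $t(X)$ to $\mu$ and the number of pieces bounded by $2^\mu$, the count of canonical $G_\kappa$-configurations that the closing-off enumerates falls from $2^{t(X)\kappa}$ to $2^{\mu\kappa}=2^\lambda$, yielding the subfamily of size $\leq 2^\lambda$ that covers $X$.

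The hard part, and the reason this is more than a mechanical substitution, is that the core of Pytkeev's argument uses compactness of the \emph{ambient} space (through the identity $t=F$ for compacta, which bounds the length of free sequences), whereas the members $C$ are only dense, not compact; a free sequence confined to $C$ is \emph{not} bounded by $t(C)$ (a discrete subspace already shows this), so one cannot simply transplant the compactness-driven core from $X$ into $C$. The construction must therefore keep the compactness bookkeeping at the level of $X$ while taking all small witnesses from the low-tightness pieces and all coverage from the $cl_{2^\mu}$-condition. A related bookkeeping obstacle rules out the cleanest implementation: a naive elementary-submodel proof, aiming to show $\scr{U}\cap M$ covers $X$ for some $M\prec H(\theta)$ with $|M|=2^\lambda$ and $[M]^{\leq\lambda}\subseteq M$ containing $\scr{U}$ and $\scr{C}$, breaks down precisely because witnessing a point $p\in X=cl_{2^\mu}C$ requires a set $B\in[C]^{\leq 2^\mu}$, and capturing such $B$ inside $M$ would demand closure under $2^\mu$-sequences—impossible when $\kappa<2^\mu$, since then $2^\mu>\lambda$. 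I expect the genuine proof to absorb the $2^\mu$-sized closures gradually through the transfinite construction rather than in a single reflection step, and verifying that this absorption keeps every intermediate family of size $\leq 2^\lambda$ is where the main effort will lie.
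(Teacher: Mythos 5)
You have not given a proof; you have given a plan whose decisive step is explicitly deferred (``absorb the $2^\mu$-sized closures gradually through the transfinite construction\dots where the main effort will lie''), and that step is precisely where the theorem lives. Concretely, the substitution you propose fails as stated: the hypothesis $t(C)\leq\mu$ produces a small witness for a point $x\in C$ lying in the closure of a set $A$ only when $x\in\overline{A\meet C}$, because the tightness of $C$ speaks only about closures, computed in $C$, of subsets of $C$. For the sets $A$ that actually arise in the closing-off behind Theorem~\ref{pyt} (sets of previously chosen points, unions of previously chosen canonical $G_\kappa$-sets), there is no reason that $x\in\overline{A\meet C}$ for the member $C\in\scr{C}$ you route $x$ through: density of $C$ in $X$ gives $\overline{C}=X$, not $A\sse\overline{A\meet C}$. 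So ``use $t(C)\leq\mu$ to obtain a witness of size $\leq\mu$'' is not an available move, and the outline cannot be executed without an additional device that you do not supply.

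The missing device---it is what the proof in \cite{Carlson2018} actually runs on; the survey only cites the result, though the notion surfaces in Theorem~\ref{variation}---is the $\scr{C}$-saturated set: $A\sse X$ is $\scr{C}$-saturated if $A\meet C$ is dense in $A$ for every $C\in\scr{C}$. Two lemmas then repair your plan. (i) If $A$ is $\scr{C}$-saturated then $\overline{A}=cl_\mu A$: given $x\in\overline{A}$, pick $C\in\scr{C}$ with $x\in C$; saturation gives $\overline{A}=\overline{A\meet C}$, so $x\in cl_C(A\meet C)$, and $t(C)\leq\mu$ yields $B\in[A\meet C]^{\leq\mu}$ with $x\in\overline{B}$. (ii) Every set $A$ is contained in a $\scr{C}$-saturated set of cardinality at most $|A|\cdot 2^\mu$: iterate $\omega$ times the operation of adding, for each point $a$ of the current set and each $C\in\scr{C}$, a set $B_{a,C}\in[C]^{\leq 2^\mu}$ with $a\in\overline{B_{a,C}}$ (such sets exist because $X=cl_{2^\mu}C$); each stage multiplies the cardinality by at most $|\scr{C}|\cdot 2^\mu=2^\mu\leq 2^\lambda$. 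With (i) and (ii) in hand, the closing-off runs exactly as you envisioned---free sequences and compactness kept at the level of the compactum $X$---except that every set constructed along the way is first saturated, so that all closures encountered are determined by $\mu$-sized subsets and every intermediate family stays of size at most $2^\lambda$; this also settles the bookkeeping you flagged as the main difficulty. Without (i) and (ii), or an equivalent mechanism, your proposal has a genuine gap.
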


Additionally, Bella and the author were able to give a result that amounts to a variation of both Theorem~\ref{arh2.2.4} and \ref{subseparable}.

\begin{theorem}[Bella, C. \cite{BC2020a}, 2020]\label{variation}
Let $X$ be a compactum and let $\kappa=wt(X)$. Then there exists
a non-empty closed set $G\sse X$ and a $\scr{C}$-saturated set $H\in[X]^{\leq 2^\kappa}$
such that $G\sse\overline{H}$ and $\chi(G,X)\leq\kappa$.
\end{theorem}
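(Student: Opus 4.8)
The plan is to reprove, in the weak-tightness setting, the closing-off argument that underlies \arhangelskii's Theorem~\ref{arh2.2.4}, packaging the construction with an elementary submodel. Fix $\kappa=wt(X)$ together with a cover $\scr{C}$ witnessing it, so that $|\scr{C}|\leq\exp{\kappa}$ and, for every $C\in\scr{C}$, $t(C)\leq\kappa$ and $X=cl_{\exp{\kappa}}C$. Since $(\exp{\kappa})^\kappa=\exp{\kappa}$, I can build $M\prec H(\theta)$ (for a large enough regular $\theta$) with $X,\scr{C},\kappa\in M$, $\exp{\kappa}\sse M$, $|M|=\exp{\kappa}$, and $M$ closed under $\kappa$-sequences, i.e. $[M]^{\leq\kappa}\sse M$; the standard route is to take the union of a $\kappa^+$-chain of submodels of size $\exp{\kappa}$, each absorbing the $\leq\kappa$-sized subsets of its predecessor. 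Because $|\scr{C}|\leq\exp{\kappa}\sse M$, every $C\in\scr{C}$ lies in $M$. I then set $H=X\meet M$, so that $H\in\setlessequal{X}{\exp{\kappa}}$ at once. (When $\kappa=\omega$, a closed $G$ with $\chi(G,X)\leq\omega$ is a subseparable $G_\delta$, so this specializes toward Theorem~\ref{subseparable}; but rather than invoke the deep \juhasz--van Mill machinery, the proof follows the Theorem~\ref{arh2.2.4} template driven by the cover $\scr{C}$.)

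Next I would verify that $H=X\meet M$ is $\scr{C}$-saturated and that $\cl{H}$ is a thick closed set. The saturation is the $\kappa$-closure property of $M$ read off piecewise: for each $C\in\scr{C}$, if $A\in\setlessequal{H\meet C}{\kappa}$ then $A\in M$, hence $\cl{A}\meet C\in M$, and since $t(C)\leq\kappa$ this forces each trace $H\meet C$ to be $\kappa$-closed --- equivalently closed --- in $C$, which is exactly the $\scr{C}$-saturation condition. Using $X=cl_{\exp{\kappa}}C$ together with $|M|=\exp{\kappa}$ and elementarity, I would argue that $\cl{H}$ meets every nonempty open set coded in $M$, so $\cl{H}$ is a genuinely large closed subset of $X$ in which to locate $G$.

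I then construct $G$ and bound its character by $\kappa$. Fix a point $p\in C\meet M\sse H$ for a suitable $C\in\scr{C}$, and, using that the compactum $X$ is regular, build inside $M$ a decreasing $\kappa$-sequence $\langle V_\alpha:\alpha<\kappa\rangle$ of open neighborhoods of $p$ with $\cl{V_{\alpha+1}}\sse V_\alpha$. Put $G=\Meet_{\alpha<\kappa}V_\alpha=\Meet_{\alpha<\kappa}\cl{V_\alpha}$, which is simultaneously closed (an intersection of the closed sets $\cl{V_\alpha}$) and a $G_\kappa$-set (an intersection of the open sets $V_\alpha$); it is nonempty since $p\in G$. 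Because $X$ is a compactum and $G$ is closed, $\psi(G,X)=\chi(G,X)$, so this exhibits $\chi(G,X)\leq\kappa$.

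I expect the main obstacle to be certifying $G\sse\cl{H}$, since this couples the two scales in play: $H$ is only required to have size $\exp{\kappa}$, while $G$ must have character exactly $\kappa$. Producing a nonempty closed $G_\kappa$-set and producing a thick $\cl{H}$ are each routine; the content is that the $\kappa$-indexed shrinking around $p$ cannot leak out of $\cl{H}$. Here a point $q\in G\minus\cl{H}$ would be separated from $H$ by an open set, and the tightness $t(C)\leq\kappa$ of the piece would provide a $\kappa$-sized subset of $H\meet C$ witnessing the relevant limiting behavior; $\scr{C}$-saturation of $H$ then absorbs that subset's closure back into $H$, contradicting the separation. This is precisely the step where weak tightness, through the tightness-$\kappa$ pieces of $\scr{C}$, does the work that the global hypothesis $t(X)\leq\kappa$ does in Theorem~\ref{arh2.2.4}, and arranging that $p$ and its witnessing small sets stay inside a single piece $C$ is where the argument needs the most care.
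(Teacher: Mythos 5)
Your opening moves are sound and in fact match the skeleton of the argument in \cite{BC2020a}: a $\kappa$-closed elementary submodel $M$ with $|M|=2^\kappa$, $2^\kappa\sse M$, and $X,\scr{C},\kappa\in M$ does exist (since $(2^\kappa)^\kappa=2^\kappa$), one does get $\scr{C}\sse M$, and $H=X\meet M$ is the right candidate. But your verification of $\scr{C}$-saturation is invalid, and the property you are verifying is the wrong one. The inference from ``$A\in M$, hence $\cl{A}\meet C\in M$'' to ``$H\meet C$ is $\kappa$-closed in $C$'' confuses membership with containment: an element of $M$ need not be a subset of $M$, and here $\cl{A}\meet C$ can have cardinality exceeding $|M|=2^\kappa$ even though $t(C)\leq\kappa$ (in Fedorchuk's compact, hereditarily separable, crowded space of cardinality $2^{\omega_1}$ with no nontrivial convergent sequences, built under $\diamondsuit$, the closure of a suitable countable set is the whole space). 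Worse, with your reading of ``$\scr{C}$-saturated'' (traces closed in the pieces) the theorem itself becomes consistently false: in that same space, $\scr{C}=\{X\}$ witnesses $wt(X)=\omega$, so a saturated $H$ with $|H|\leq\cont$ would be a closed set of size at most $\cont$, hence finite (every infinite closed subset there has size $2^{\omega_1}$), and then $\cl{H}=H$ cannot contain a nonempty closed $G$ with $\chi(G,X)\leq\omega$, since such a $G$ is a closed $G_\delta$ and every nonempty closed $G_\delta$ in that space has cardinality $2^{\omega_1}$ (it is crowded, has no points of countable character, and \v{C}ech--Posp\'i\v{s}il applies). So the term the survey leaves undefined cannot mean what you take it to mean; in \cite{Carlson2018} and \cite{BC2020a} saturation is a density-type condition on the traces, roughly $H\sse cl_{2^\kappa}(H\meet C)$ for each $C\in\scr{C}$, and \emph{that} property $X\meet M$ does satisfy by a short correct argument you never make: for $h\in H$ and $C\in\scr{C}$, elementarity applied to the statement $X=cl_{2^\kappa}C$ yields $B\in[C]^{\leq 2^\kappa}\meet M$ with $h\in\cl{B}$, and $B\sse M$ because $2^\kappa\sse M$.

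The second gap is the fatal one: the existence of a nonempty closed $G$ with $\chi(G,X)\leq\kappa$ \emph{inside} $\cl{H}$ is the entire content of the theorem (it is the analogue of the hard half of Theorem~\ref{arh2.2.4}), and your proposal never proves it. Taking an arbitrary decreasing $\kappa$-sequence $\langle V_\alpha\rangle$ of neighborhoods of $p$ and setting $G=\Meet_{\alpha<\kappa}V_\alpha$ gives a nonempty closed set with $\chi(G,X)=\psi(G,X)\leq\kappa$ (that computation is fine), but nothing ties $G$ to $\cl{H}$; with $V_\alpha=X$ for all $\alpha$ you get $G=X$. Your proposed repair --- that a point $q\in G\minus\cl{H}$ would force ``limiting behavior'' of $H\meet C$ at $q$ which saturation then ``absorbs'' --- is not an argument: membership of $q$ in $G$ imposes no relation whatsoever between $q$ and $H$ beyond $p\in G\meet H$, and the absorption step invokes exactly the closedness form of saturation that fails. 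A correct proof must \emph{choose} the sets whose intersection is $G$ by a closing-off recursion in which compactness, the tightness $t(C)\leq\kappa$ of the pieces, and their $2^\kappa$-density $X=cl_{2^\kappa}C$ interact; appealing to Theorem~\ref{arh2.2.4} as a ``template'' does not supply this, both because that theorem's own proof is nontrivial and is not reproduced, and because its hypothesis $t(X)\leq\kappa$ is precisely what weak tightness fails to give. As written, the proposal establishes only the bookkeeping ($|H|\leq 2^\kappa$, $\scr{C}\sse M$, $\chi=\psi$ for closed sets in compacta) and none of the theorem's substance.
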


Using Theorems~\ref{pytimprove} and \ref{variation}, Bella and the author were able to give a full improvement to de la Vega's Theorem in \cite{BC2020a}. Recall that $\pi\chi(X)\leq t(X)$ for a compactum $X$.

\begin{theorem}[Bella, C. \cite{BC2020a}, 2020]\label{cpthomog}
If $X$ is a homogeneous compactum then $|X|\leq 2^{wt(X)\pi\chi(X)}$.
\end{theorem}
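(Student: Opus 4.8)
The plan is to follow the structure of the proof of de la Vega's Theorem~\ref{DLV} essentially verbatim, replacing each of its two structural ingredients---\arhangelskii's Theorem~\ref{arh2.2.4} and Pytkeev's Theorem~\ref{pyt}---by their $wt(X)$-refinements, namely Theorems~\ref{variation} and \ref{pytimprove}. Throughout, set $\kappa=wt(X)$.

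First I would invoke Theorem~\ref{variation} to produce a non-empty closed set $G\sse X$ with $\chi(G,X)\leq\kappa$ together with a set $H\in[X]^{\leq 2^\kappa}$ satisfying $G\sse\overline{H}$. Since $G$ is closed with character at most $\kappa$ and $X$ is a compactum (hence regular), $G$ is the intersection of a neighborhood base of size $\leq\kappa$, i.e.\ a $G_\kappa$-set, and thus a basic open set of the $G_\kappa$-modification $X_\kappa$. Next, fix $p\in G$ and, using homogeneity, homeomorphisms $h_x:X\to X$ with $h_x(p)=x$ for each $x\in X$. As each $h_x$ is a homeomorphism it preserves character, so every $h_x[G]$ is again a closed $G_\kappa$-set, and $\scr{G}=\{h_x[G]:x\in X\}$ is a cover of $X$ by sets open in $X_\kappa$ (indeed $x=h_x(p)\in h_x[G]$ for each $x$).

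I would then apply the improved Pytkeev Theorem~\ref{pytimprove}, which gives $L(X_\kappa)\leq 2^{wt(X)\cdot\kappa}=2^\kappa$. Hence the open cover $\scr{G}$ of $X_\kappa$ admits a subcover $\scr{G}^\prime\sse\scr{G}$ with $|\scr{G}^\prime|\leq 2^\kappa$. Writing each element of $\scr{G}^\prime$ as $h_x[G]$ and setting $H_x=h_x[H]$, the containment $G\sse\overline{H}$ transports under the homeomorphism to $h_x[G]\sse\overline{H_x}$ with $|H_x|\leq 2^\kappa$. Therefore $X=\Un\scr{G}^\prime\sse\cl{\Un H_x}$, so $D=\Un\{H_x:h_x[G]\in\scr{G}^\prime\}$ is dense in $X$ with $|D|\leq 2^\kappa\cdot 2^\kappa=2^\kappa$, whence $d(X)\leq 2^\kappa$. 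Finally, the homogeneous density bound Theorem~\ref{density} yields
$$|X|\leq d(X)^{\pi\chi(X)}\leq (2^\kappa)^{\pi\chi(X)}=2^{wt(X)\pi\chi(X)}.$$

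The genuinely hard work is entirely hidden in the two cited refinements: Theorem~\ref{variation} (the $wt$-analogue of \arhangelskii's and \juhasz--van Mill's results producing a small-character closed set inside the closure of a $2^\kappa$-sized set) and Theorem~\ref{pytimprove} (the $wt$-strengthening of Pytkeev's Lindel\"of bound on $X_\kappa$). Granting those, the only point in the assembly requiring care is the observation that a closed set of character $\leq\kappa$ in a compactum is a legitimate basic open set of $X_\kappa$, and that this status, along with the containment $G\sse\overline{H}$, is preserved by the homeomorphisms $h_x$. This is exactly the place where homogeneity does its work, converting a single small-character closed set into a cover of the whole space to which the refined Lindel\"of bound can be applied.
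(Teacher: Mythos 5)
Your proposal is correct and is exactly the route the paper indicates for Theorem~\ref{cpthomog}: run the template of the proof of Theorem~\ref{DLV}, replacing Theorem~\ref{arh2.2.4} by Theorem~\ref{variation} and Theorem~\ref{pyt} by Theorem~\ref{pytimprove}, obtain $d(X)\leq 2^{wt(X)}$, and finish with Theorem~\ref{density}. All the connecting steps check out, including the observation that a closed set of character $\leq\kappa$ in a compactum is a $G_\kappa$-set and that this, together with $G\sse\overline{H}$, is preserved by the homeomorphisms $h_x$.
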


Below we isolate the case of Theorem \ref{cpthomog} where all
cardinal invariants involved are countable. It follows directly
from Proposition~\ref{wteasy} and the above. Compare Corollary~\ref{hcountable} with Theorem~\ref{jvm}.

\begin{corollary}\label{hcountable}
Let $X$ be a homogeneous compactum of countable
$\pi$-character with a cover $\scr{C}$ such that
$|\scr{C}|\leq\cont$ and for all $C\in\scr{C}$, $C$ is countably
tight and dense in $X$. Then $|X|\leq\cont$.
\end{corollary}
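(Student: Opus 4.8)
The plan is to chain the two preceding results exactly as the remark before the statement suggests: first invoke Proposition~\ref{wteasy} to certify that $X$ is weakly countably tight, and then feed this into Theorem~\ref{cpthomog}.

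First I would set $\kappa=\omega$, so that $2^\kappa=\cont$, and check that the given cover $\scr{C}$ meets every hypothesis of Proposition~\ref{wteasy} for this choice of $\kappa$. The bound $|\scr{C}|\leq\cont=2^\omega$ is assumed; each $C\in\scr{C}$ is countably tight, so $t(C)\leq\omega=\kappa$, and each $C$ is dense in $X$ by hypothesis. It then remains only to verify one of the two trailing alternatives of Proposition~\ref{wteasy}, and here the assumption of countable $\pi$-character supplies it, since $\pi\chi(X)=\omega\leq\cont=2^\omega$. Proposition~\ref{wteasy} therefore yields $wt(X)\leq\omega$, and as $wt(X)$ is by definition an infinite cardinal we conclude $wt(X)=\omega$.

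With weak countable tightness established, I would apply Theorem~\ref{cpthomog}, which is available because $X$ is a homogeneous compactum. It gives $|X|\leq 2^{wt(X)\pi\chi(X)}=2^{\omega\cdot\omega}=2^\omega=\cont$, as desired.

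There is essentially no genuine obstacle: the corollary is a direct specialization of Theorem~\ref{cpthomog} in which every relevant cardinal function collapses to $\omega$. The only point demanding a little care is bookkeeping, namely ensuring that it is the ``$\pi\chi(X)\leq 2^\kappa$'' branch of Proposition~\ref{wteasy} that is being satisfied rather than the ``$t(X)\leq 2^\kappa$'' branch, since no tightness bound on $X$ as a whole is hypothesized here, only countable tightness on the individual members of the cover $\scr{C}$.
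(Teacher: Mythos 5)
Your proof is correct and follows exactly the route the paper intends: the paper states that Corollary~\ref{hcountable} ``follows directly from Proposition~\ref{wteasy} and the above'' (i.e., Theorem~\ref{cpthomog}), which is precisely your argument with $\kappa=\omega$, including the correct observation that it is the $\pi\chi(X)\leq 2^\kappa$ alternative of Proposition~\ref{wteasy} that applies.
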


Another corollary to Theorem~\ref{cpthomog} follows directly from the fact that in an compact, $T_5$ space there exists a point of countable $\pi$-character. (This is due to \sapirovskii). If the space $X$ is additionally homogeneous then $\pi\chi(X)=\omega$. This corollary has not been previously mentioned in the literature.

\begin{corollary}\label{T5}
If $X$ is compact, $T_5$, and homogeneous, then $|X|\leq 2^{wt(X)}$.
\end{corollary}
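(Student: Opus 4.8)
The plan is to reduce immediately to Theorem~\ref{cpthomog} by showing that the hypotheses force $\pi\chi(X)=\omega$. The only substantive input is \sapirovskii's theorem that a compact $T_5$ space contains a point of countable $\pi$-character; everything else is bookkeeping, and in particular the heavy lifting has already been carried out in the proof of Theorem~\ref{cpthomog}.

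First I would fix, by \sapirovskii's result, a point $p\in X$ with $\pi\chi(p,X)=\omega$. Next I would use homogeneity to spread this property across all of $X$: given an arbitrary $x\in X$, choose a homeomorphism $h:X\to X$ with $h(p)=x$. Since a homeomorphism carries a local $\pi$-base at $p$ to a local $\pi$-base at $x$ of the same cardinality, we obtain $\pi\chi(x,X)=\pi\chi(p,X)=\omega$. As $x$ was arbitrary, it follows that $\pichar{X}=\sup\{\pi\chi(x,X):x\in X\}=\omega$.

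Finally I would apply Theorem~\ref{cpthomog} directly. Since $X$ is a homogeneous compactum, that theorem gives $|X|\leq 2^{wt(X)\pichar{X}}$, and substituting $\pichar{X}=\omega$ (together with the fact that $wt(X)$ is infinite, so $wt(X)\cdot\omega=wt(X)$) yields $|X|\leq 2^{wt(X)\cdot\omega}=2^{wt(X)}$, as desired.

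There is no real obstacle here beyond correctly invoking \sapirovskii's theorem on the existence of a point of countable $\pi$-character in a compact $T_5$ space; the remainder is a routine assembly of results already established in the excerpt. The one point deserving a moment's care is the invariance of local $\pi$-character under homeomorphisms, since this is precisely the mechanism by which homogeneity upgrades a single well-behaved point to the global bound $\pichar{X}=\omega$ that makes Theorem~\ref{cpthomog} collapse to the claimed inequality.
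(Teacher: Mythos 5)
Your proposal is correct and follows exactly the paper's own route: invoke \sapirovskii's theorem that a compact $T_5$ space has a point of countable $\pi$-character, use homogeneity to conclude $\pi\chi(X)=\omega$, and then apply Theorem~\ref{cpthomog} to get $|X|\leq 2^{wt(X)\pi\chi(X)}=2^{wt(X)}$. The paper states this argument only in passing before the corollary, so your write-up simply makes the same reasoning explicit.
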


%%%%%%%%%%%%%%%%%%%%%%%%%%%%%%%%%%%%%%%%%%%%%%%%%%%%%%
\section{Generalizations of de la Vega's Theorem}

This section is devoted to extensions of de la Vega's Theorem; that is, results that directly imply that theorem in a more generalized setting. Natural questions arise, such as, does Lindel\"of suffice instead of the compactness property? The answer to this question is no. In \cite{CR2012}, an example of a $\sigma$-compact, homogeneous space $X$ was constructed with the property $|X|>2^{L(X)\pi\chi(X)t(X)}$. This shows $2^{L(X)t(X)}$ is not a bound for the cardinality of every Hausdorff homogeneous space. 

Exactly what are the necessary properties of compactness needed in this theorem? It turns out that one pair of necessary properties are Lindel\"of and countable point-wise compactness type. The \emph{point-wise compactness type} $pct(X)$ of a space $X$ is the least infinite cardinal $\kappa$ such that $X$ can be covered by compact sets $K$ such that $\chi(K,X)\leq\kappa$. Clearly compact spaces are of countable point-wise compactness type. Also, all locally compact spaces have this property. In \cite{DLVthesis}, de la Vega showed that $|X|\leq 2^{L(X)t(X)pct(X)}$ for any regular homogeneous space, and this bound was shown to be valid for regular power homogeneous spaces in~\cite{Rid06}. In \cite{CR2012}, the regularity property was shown to be unnecessary. 
 
\begin{theorem}[C., Ridderbos \cite{CR2012}, 2012]\label{ltpct}
If $X$ is a power homogeneous Hausdorff space then $|X|\leq 2^{L(X)t(X)pct(X)}$.
\end{theorem}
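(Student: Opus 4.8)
The plan is to prove the bound $|X|\leq 2^{L(X)t(X)pct(X)}$ for power homogeneous Hausdorff spaces by following the template established by the proof of de la Vega's Theorem (Theorem~\ref{DLV}), but replacing each compactness-specific ingredient with its Lindel\"of, countable-point-wise-compactness-type analogue. Set $\kappa = L(X)t(X)pct(X)$. The overall strategy is: (i) cover $X$ by compact sets of small character and dense closures of small subsets, using homogeneity to homogenize such a cover; (ii) extract a subcover of size at most $2^\kappa$ to bound the density $d(X)$ by $2^\kappa$; and (iii) conclude via the cardinality bound $|X|\leq d(X)^{\pi\chi(X)}$ from the power-homogeneous version of that inequality (Theorem~\ref{ridHausdorff}), together with $\pi\chi(X)\leq t(X)\leq\kappa$.

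First I would establish the analogue of \arhangelskii's Theorem~\ref{arh2.2.4} in this more general setting: there exists a compact set $K$ with $\chi(K,X)\leq pct(X)\leq\kappa$ and a set $H$ with $|H|\leq 2^{\kappa}$ such that $K\sse\overline{H}$. The point-wise compactness type gives a compact $K$ of character at most $pct(X)$ to start from, and the standard closing-off argument using tightness $t(X)\leq\kappa$ (as in the proof of Theorem~\ref{arh2.2.4}) produces the small dense set $H$ inside $\overline{K}$; this is where the Lindel\"of property substitutes for compactness in controlling the cover at each stage. The power-homogeneous machinery of Ridderbos (Corollary 3.3 in \cite{Rid06}, as invoked for Theorem~\ref{ridHausdorff}) supplies, after fixing a point $p\in K$, a family of maps that move $p$ to any $x\in X$ while respecting local $\pi$-bases, so that the translates form a cover $\scr{G}=\{K_x : x\in X\}$ of $X$ by compact sets of character at most $\kappa$, each contained in the closure of a set of size at most $2^\kappa$.

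Next I would invoke a Pytkeev-style Lindel\"of-degree bound for the $G_\kappa$-modification to extract a small subcover. Concretely, one needs $L(X_\kappa)\leq 2^{\kappa}$ in this setting; since each $K_x$ is a compact $G_\kappa$-set (using $\chi(K_x,X)\leq\kappa$), the cover $\scr{G}$ consists of open sets of $X_\kappa$, and the Lindel\"of-type degree yields a subfamily $\scr{G}'\sse\scr{G}$ with $|\scr{G}'|\leq 2^\kappa$ covering $X$. Then $X=\Un\scr{G}'\sse\overline{\Un_{x}H_x}$ where $H_x$ witnesses $K_x\sse\overline{H_x}$, so $H=\Un_{K_x\in\scr{G}'}H_x$ is dense with $|H|\leq 2^\kappa\cdot 2^\kappa=2^\kappa$, giving $d(X)\leq 2^\kappa$. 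Finally Theorem~\ref{ridHausdorff} gives $|X|\leq d(X)^{\pi\chi(X)}\leq (2^\kappa)^\kappa=2^\kappa$.

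\textbf{The hard part} will be carrying out step (ii): the clean Pytkeev theorem (Theorem~\ref{pyt}) is stated only for compacta, and here one must instead bound the Lindel\"of degree of the $G_\kappa$-modification of a merely Lindel\"of space. The delicate closing-off argument must simultaneously track the Lindel\"of degree, the tightness, and the local character of the compact pieces; verifying that the subcover can be extracted with cardinality $2^\kappa$ (rather than blowing up) is the technical crux, and it is precisely the place where the regularity hypothesis removed in \cite{CR2012} had to be worked around. The remaining steps — the \arhangelskii-type covering lemma and the final appeal to Theorem~\ref{ridHausdorff} — are routine adaptations of the proof of Theorem~\ref{DLV}.
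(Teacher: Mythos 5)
A preliminary remark: this survey does not actually prove Theorem~\ref{ltpct}; it only cites \cite{CR2012}. Your outline does follow the right template --- it is the survey's proof of Theorem~\ref{DLV} with the compactness-specific ingredients replaced by $L(X)$/$pct(X)$ analogues, and that is indeed the broad strategy of \cite{CR2012} and \cite{CPR2012}. The genuine gap, however, is not where you located it. You place the difficulty in the Pytkeev-type bound $L(X_\kappa)\leq 2^{\kappa}$ (a real theorem, proved in \cite{CR2012} and strengthened in \cite{CPR2012}, so leaving it as ``the hard part'' is at least an honestly flagged hole), but you treat the covering step as routine, and it is not. In a space that is only power homogeneous there are no homeomorphisms of $X$ carrying a fixed point $p$ to an arbitrary $x\in X$, so ``the translates $K_x$'' do not exist in any direct sense. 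The homeomorphisms furnished by Corollary 3.3 of \cite{Rid06} act on the power $X^\mu$, and their extra properties concern local $\pi$-bases, not images of compact sets or of $G_\kappa$-sets. One can compose them with projections and slice embeddings to obtain continuous maps into $X$, and continuity does preserve compactness and the property of lying in the closure of a small set (since $f[\overline{H}]\sse\overline{f[H]}$); but it does not preserve being a $G_\kappa$-set, nor the character bound $\chi(\cdot,X)\leq\kappa$, and these are exactly what you need for the translated pieces to be open in $X_\kappa$ so that the Pytkeev-type theorem applies to the cover. Controlling this --- in effect, bookkeeping of supports and coordinate dependence in $X^\mu$ so that around every point of $X$ one still gets a $G_\kappa$-set with a small-closure witness --- is the technical heart of the power homogeneous case and occupies much of \cite{avr07}, \cite{Rid06}, and \cite{CR2012}. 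Calling it a ``routine adaptation'' of Theorem~\ref{DLV} is where your proposal fails as a proof; the homogeneous case of the argument is fine, the power homogeneous case is not delivered.

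A second, minor and fixable, error: you justify $\pi\chi(X)\leq\kappa$ by ``$\pi\chi(X)\leq t(X)$''. That inequality is \sapirovskii's theorem for compacta and is false for general Hausdorff spaces (a countable dense subspace of $2^{\cont}$ has countable tightness but $\pi$-character $\cont$). The inequality valid in the Hausdorff setting, quoted in this survey right after Theorem~\ref{pwlc}, is $\pi\chi(X)\leq t(X)pct(X)$; since $pct(X)\leq\kappa$ as well, your final computation $|X|\leq d(X)^{\pi\chi(X)}\leq(2^{\kappa})^{\kappa}=2^{\kappa}$ goes through after this repair.
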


Thus, for example, the cardinality bound $2^{t(X)}$ holds for all locally compact, Lindel\"of homogeneous Hausdorff spaces.

The next five theorems represent slight improvements of Theorem~\ref{ltpct}. For a cardinal $\kappa$ and a space $X$, a subset $\{x_\alpha:\alpha\leq\kappa\}\sse X$ is a \emph{free sequence of length }$\kappa$ if for every $\beta<\kappa$, $cl_X\{x_\alpha:\alpha<\beta\}\meet cl_X\{x_\alpha:\alpha\geq\beta\}=\es$. The free sequence number $F(X)$ is the supremum of the lengths of all free sequences in $X$. It is well-known that $t(X)=F(X)$ if $X$ is a compactum. In addition, as $F(X)\leq L(X)t(X)$ for any space $X$, the following theorem improves Theorem~\ref{ltpct}.

\begin{theorem}[C., Porter, Ridderbos \cite{CPR2012}, 2012]\label{F(X)}
If $X$ is a power homogeneous Hausdorff space then $|X|\leq 2^{L(X)F(X)pct(X)}$.
\end{theorem}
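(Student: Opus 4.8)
The point to register at the outset is that Theorem~\ref{F(X)} is a genuine \emph{strengthening} of Theorem~\ref{ltpct}, not a formal corollary of it: the inequality $F(X)\leq L(X)t(X)$ makes the exponent $L(X)F(X)pct(X)$ no larger than $L(X)t(X)pct(X)$, so the asserted bound is the \emph{smaller} of the two and must be established on its own terms. The plan is to revisit the proof of Theorem~\ref{ltpct}, isolate the two places where the tightness $t(X)$ is spent, and replace each by the free sequence number $F(X)$. Write $\kappa=L(X)F(X)pct(X)$. I will prove two estimates separately, (i) $\pi\chi(X)\leq\kappa$ and (ii) $d(X)\leq 2^{\kappa}$, and then combine them through Ridderbos's bound for power homogeneous Hausdorff spaces (Theorem~\ref{ridHausdorff}): $|X|\leq d(X)^{\pi\chi(X)}\leq(2^{\kappa})^{\kappa}=2^{\kappa}$.

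Estimate (i) is where the replacement of $t$ by $F$ is essentially free, and it uses only the compactness packaged into $pct(X)$. Cover $X$ by compacta $K$ with $\chi(K,X)\leq pct(X)$. On each such $K$ one has $\pi\chi(K)\leq t(K)$, and, because $K$ is a compactum, \arhangelskii's equality $t(K)=F(K)\leq F(X)$. Splicing a local $\pi$-base of a point $x\in K$ taken inside $K$ (of size at most $F(X)$) with a neighbourhood base of $K$ in $X$ (of size at most $pct(X)$) yields a local $\pi$-base of $x$ in $X$ of size at most $F(X)\,pct(X)$. Hence $\pi\chi(X)\leq F(X)\,pct(X)\leq\kappa$, and the free sequence number has entered only through $t=F$ on compacta, so nothing is lost relative to the tightness version.

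The real work is estimate (ii). Power homogeneity is used first: fixing $p\in X$, one produces homeomorphisms $h_{x}\colon X\to X$ with $h_{x}(p)=x$ that additionally respect local $\pi$-bases, extracted at the level of $X$ from the homogeneity of some power $X^{\lambda}$ via projection maps (Ridderbos's Corollary~3.3); in the merely homogeneous case these are immediate. Choosing through $p$ a compactum $K$ with $\chi(K,X)\leq pct(X)$ and applying the free-sequence refinement of \arhangelskii's Theorem~\ref{arh2.2.4} \emph{inside} $K$ (legitimate because $K$ is compact, so its internal tightness is its free sequence number, bounded by $F(X)$) gives a nonempty set contained in the closure of a set of size at most $2^{\kappa}$ and of small character. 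Transporting by the family $\{h_{x}:x\in X\}$ covers $X$ by translates, each a compactum of character at most $pct(X)$ lying in the closure of a set of size at most $2^{\kappa}$. A Lindel\"of reflection then extracts a subcover of size at most $2^{\kappa}$, and the union of the associated small sets is dense in $X$ and has cardinality at most $2^{\kappa}\cdot 2^{\kappa}=2^{\kappa}$; thus $d(X)\leq 2^{\kappa}$.

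The main obstacle is precisely the free-sequence refinement invoked in estimate (ii). In the proof of Theorem~\ref{ltpct} the step that captures a point of a closure inside a set of controlled size is governed by tightness; to run it with $F(X)$ in place of $t(X)$ one argues by contradiction, attempting to build a free sequence whose initial segments are absorbed by the closing-off and whose continuation is forced by the Lindel\"of property together with the compacta of character at most $pct(X)$ used to separate at each stage. The delicate points are ensuring that each initial segment genuinely fails to accumulate at the escaping point (so that the free-sequence condition is preserved) and that the Lindel\"of number keeps the separation data down to size $\kappa$; a free sequence of length $\kappa^{+}$ then contradicts $F(X)\leq\kappa$, terminating the construction and yielding the desired closure bound. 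Once this refinement is in hand, the two estimates assemble immediately into $|X|\leq 2^{\kappa}$.
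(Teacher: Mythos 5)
Your skeleton --- establish (i) $\pi\chi(X)\leq\kappa$ and (ii) $d(X)\leq 2^{\kappa}$ for $\kappa=L(X)F(X)pct(X)$, then apply Theorem~\ref{ridHausdorff} --- is the right one, and estimate (i) is correct: covering $X$ by compacta $K$ with $\chi(K,X)\leq pct(X)$, using \sapirovskii's inequality $\pi\chi(K)\leq t(K)$ together with $t(K)=F(K)\leq F(X)$, and splicing a local $\pi$-base in $K$ with a neighborhood base of $K$ in $X$ does give $\pi\chi(X)\leq F(X)pct(X)$. The fatal gap is in estimate (ii): you claim that power homogeneity ``produces homeomorphisms $h_x\colon X\to X$ with $h_x(p)=x$,'' citing Ridderbos's Corollary 3.3. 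No such homeomorphisms exist in general --- their existence is precisely the definition of homogeneity, and power homogeneous spaces need not be homogeneous: $[0,1]$ and $\omega+1$ are power homogeneous, yet no autohomeomorphism of $[0,1]$ moves $1/2$ to $0$, and none of $\omega+1$ moves the limit point to an isolated point. Ridderbos's corollary supplies homeomorphisms of the \emph{power} $X^{\mu}$ with additional properties relating local $\pi$-bases of $X^{\mu}$ to those of $X$ through the projection maps; these do not descend to homeomorphisms of $X$. Hence your covering step (``transporting by the family $\{h_x:x\in X\}$ covers $X$ by translates'') is unavailable, and estimate (ii) as written proves the theorem only for \emph{homogeneous} $X$ --- essentially the scheme of the paper's proof of Theorem~\ref{DLV}, with Theorem~\ref{arh2.2.4} correctly applied inside a compactum to convert $t$ into $F$. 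Bridging the gap from homogeneous to power homogeneous is exactly where the substance of \cite{Rid06}, \cite{CR2012} and \cite{CPR2012} lies: the configuration $G\subseteq\overline{H}$ must be transported by homeomorphisms of $X^{\mu}$ and then pushed back down to $X$ by projections, taking care that small character of compact sets survives projection and that the ``small'' sets do not blow up to size $|H|^{\mu}$ along the way.

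There is a second, lesser gap. The step you call ``a Lindel\"of reflection then extracts a subcover of size at most $2^{\kappa}$'' is a Pytkeev-type theorem: the paper's Theorem~\ref{pyt} applies only to compact $X$, whereas here one needs a bound of the shape $L(X_{\kappa})\leq 2^{L(X)F(X)pct(X)\cdot\kappa}$ for arbitrary Hausdorff $X$, which is the main technical result of \cite{CPR2012} and is proved by a closing-off argument against free sequences. Your final paragraph gestures at such an argument but does not carry it out, and it misidentifies the main obstacle as a refinement of Theorem~\ref{arh2.2.4} --- a refinement you had already (correctly) rendered unnecessary by working inside the compactum $K$, where $t(K)=F(K)$. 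So the two ingredients that actually make the stated theorem true --- the power-homogeneity transfer and the non-compact Pytkeev theorem --- are both missing from the proposal.
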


The invariant $aL_c(X)$, the \emph{almost Lindel\"of degree with respect to closed sets}, is the smallest infinite cardinal $\kappa$ such that for every closed subset $C$ of $X$ and every collection $\scr{U}$ of open sets in $X$ that cover $C$, there is a subcollection $\scr{V}$ of $\scr{U}$ such that $|\scr{V}|\leq\kappa$ and $\{\overline{U}:U\in\scr{V}\}$ covers $C$. It is clear that $aL_c(X)\leq L(X)$.

\begin{theorem}[C., Porter, Ridderbos \cite{CPR2012}, 2012]\label{alc}
If $X$ is a power homogeneous Hausdorff space then $|X|\leq 2^{aL_c(X)t(X)pct(X)}$.
\end{theorem}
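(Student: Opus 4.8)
The plan is to re-run the proof of Theorem~\ref{ltpct} almost verbatim, keeping careful track of the one place where the Lindel\"of degree $L(X)$ is actually invoked, and to check that at that point the weaker invariant $aL_c(X)$ already suffices. Since the excerpt records $aL_c(X)\leq L(X)$, success of this strategy immediately upgrades Theorem~\ref{ltpct} to the stated bound. Throughout set $\kappa=aL_c(X)t(X)pct(X)$, so that $t(X)\leq\kappa$, $pct(X)\leq\kappa$, and $aL_c(X)\leq\kappa$; the goal is the reflection/closing-off argument of \cite{CR2012} showing $|X|\leq 2^\kappa$.

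First I would recall the two structural ingredients that are insensitive to which covering invariant is used. By $pct(X)\leq\kappa$, the space is covered by compact sets of character at most $\kappa$; by the pct-analogue of \arhangelskii's Theorem~\ref{arh2.2.4} employed in \cite{CR2012}, one obtains a non-empty compact ``seed'' $G$ with $\chi(G,X)\leq\kappa$ that lies in the closure of a set of size at most $2^\kappa$. Power homogeneity, together with the refined homeomorphisms of Corollary~3.3 in \cite{Rid06} that respect local $\pi$-bases when passing between $X^\lambda$ and $X$, translates $G$ into a cover $\{h_x[G]:x\in X\}$ of $X$ by compacta, each of character $\leq\kappa$ and each contained in the closure of a set of cardinality $\leq 2^\kappa$. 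One then runs the closing-off construction of length $\kappa^+$ (equivalently, an elementary submodel $M$ of size $2^\kappa$ closed under $\kappa$-sequences with $\kappa\subseteq M$); the hypothesis $t(X)\leq\kappa$ guarantees that the accumulated set $M\cap X$ is closed, so closures of the $\leq\kappa$-sized witness sets remain under control inside $M$. The only step that consults the covering invariant is the reflection of an open cover of a closed set — the space itself, or a closed ``remainder'' — where one needs a subfamily of size $\leq\kappa$ that the construction can absorb.

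This is exactly where I would substitute $aL_c(X)$ for $L(X)$. Whereas $L(X)\leq\kappa$ would hand back a genuine subcover of size $\leq\kappa$, the property $aL_c(X)\leq\kappa$ instead returns a subfamily of size $\leq\kappa$ whose \emph{closures} cover the closed set in question. The key observation is that, because the submodel is closed under $\kappa$-sequences and $t(X)\leq\kappa$, taking the closures of these $\leq\kappa$ many open sets does not leave $M\cap X$; closures of reflected sets are precisely the objects the argument already manipulates, so the almost-Lindel\"of-with-respect-to-closed-sets degree is the exact strength the construction requires. I expect the main obstacle to be this closure bookkeeping: one must verify that the closures produced by $aL_c$ stay captured by the closures of the controlled small sets (so that the $2^\kappa$ budget at each stage is never exceeded) and that the remainder is genuinely closed at limit stages, so that the step survives unchanged. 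Once this is confirmed, every other component of the proof of Theorem~\ref{ltpct} — and of its free-sequence refinement Theorem~\ref{F(X)} — is inherited without modification, yielding $|X|\leq 2^{aL_c(X)t(X)pct(X)}$.
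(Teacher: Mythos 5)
You should know at the outset that the survey contains no proof of Theorem~\ref{alc} (it is simply cited from \cite{CPR2012}), and it does not contain the proof of Theorem~\ref{ltpct} either; so there is no in-paper argument to ``re-run almost verbatim,'' and your proposal must stand on its own. As written, it has a genuine gap at exactly the step you yourself flag as needing verification. Replacing a subcover by an $aL_c$-family whose closures cover is a real weakening, and you never supply the mechanism that recovers the lost strength. The reason you do sketch --- that since $M$ is closed under $\kappa$-sequences and $t(X)\leq\kappa$, ``taking the closures of these $\leq\kappa$ many open sets does not leave $M\cap X$'' --- is false: for an open $U\in M$, the closure $\overline{U}$ is an \emph{element} of $M$, not a subset of $M\cap X$; closures of reflected open sets routinely contain points outside the submodel, and tightness only controls closures of subsets of $M\cap X$, not of arbitrary open sets belonging to $M$. (For the same reason, $t(X)\leq\kappa$ plus $\kappa$-closure of $M$ does not by itself make $M\cap X$ closed.) Concretely, in the closing-off contradiction one has a point $x$ outside the candidate set $A$ and an open cover of $A$ by sets avoiding $x$; with $L(X)\leq\kappa$ one selects $\leq\kappa$ of them that still cover $A$ and still avoid $x$, but with $aL_c(X)\leq\kappa$ the selected \emph{closures} may all contain $x$, and the contradiction evaporates. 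This is precisely why $aL_c$ cannot be blindly substituted for $L$ in cardinality bounds.

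What makes the substitution legitimate in this particular theorem --- and why $aL_c(X)$ is paired with $pct(X)$ and the Hausdorff hypothesis --- is point/compact-set separation, which your outline never invokes. Since $pct(X)\leq\kappa$, the space is covered by compact sets $K$ with $\chi(K,X)\leq\kappa$; in a Hausdorff space a compact set $K$ and a point $x\notin K$ lie in disjoint open sets, and an open set disjoint from an open $U$ is disjoint from $\overline{U}$, so there is an open $U\supseteq K$ with $x\notin\overline{U}$. Hence the open covers arising in the closing-off argument can be chosen (from the $\leq\kappa$-sized neighborhood bases of the compact pieces) so that the closures of their members still avoid the diagonalizing point; only then does an $aL_c$-selection, whose closures cover the closed candidate set, do the same job as a genuine subcover. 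A correct write-up would therefore (i) prove a Pytkeev-type covering theorem (cf.\ Theorem~\ref{pyt}) for covers by compact sets of character at most $\kappa$, with $aL_c(X)$ in place of $L(X)$, using this separation in the closing-off step, and then (ii) run the scheme you correctly describe: power homogeneity spreads one compact piece contained in the closure of a set of size $2^\kappa$ over all of $X$, the covering theorem yields $d(X)\leq 2^\kappa$, and Theorem~\ref{ridHausdorff} together with $\pi\chi(X)\leq t(X)pct(X)$ finishes. Your proposal has the right skeleton for (ii), but the substance of (i) --- the only part where $aL_c$ versus $L$ actually matters --- is missing, and the justification offered in its place is incorrect.
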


Recently in \cite{BC2020b}, the Lindel\"of degree $L(X)$ in Theorem~\ref{ltpct} was replaced by the cardinal invariant $pwL_c(X)$, introduced by Bella and Spadaro in \cite{BS2020}. The \emph{piecewise weak Lindel\"of degree for closed sets} $pwL_c(X)$ of $X$ is the least infinite cardinal $\kappa$ such that for every closed set $F\sse X$, for every open cover $\scr{U}$ of $F$, and every decomposition $\{\scr{U}_i:i\in I\}$ of $\scr{U}$, there are families $\scr{V}_i\in[\scr{U}_i]^{\leq\kappa}$ for every $i\in I$ such that $F\sse\Un\{\overline{\Un\scr{V}_i}:i\in I\}$. It is clear that $pwL_c(X)\leq L(X)$ and, importantly, it can be shown that $pwL_c(X)\leq c(X)$. 
\begin{theorem} [Bella, C. \cite{BC2020b}, 2020]\label{pwlc}
If $X$ is homogeneous and Hausdorff then $|X|\leq 2^{pwL_c(X)t(X)pct(X)}$.
\end{theorem}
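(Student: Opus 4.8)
The plan is to follow the template of the proof of de la Vega's Theorem~\ref{DLV}, reducing everything to the density bound of Theorem~\ref{density}. Set $\kappa=pwL_c(X)t(X)pct(X)$. The endgame is the estimate $|X|\leq d(X)^{\pichar{X}}$, so it suffices to prove two things: that $\pichar{X}\leq\kappa$, and that $d(X)\leq 2^\kappa$. The first is the easier half: since $pct(X)\leq\kappa$, every point $x$ lies in a compact set $K$ with $\chi(K,X)\leq\kappa$, and as $K$ is a compactum we have $\pi\chi(x,K)\leq t(K)\leq t(X)\leq\kappa$ by \sapirovskii's inequality $\pichar{K}\leq t(K)$. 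Combining a local $\pi$-base of $x$ in $K$ (of size $\leq\kappa$) with the $\leq\kappa$ members of a neighborhood base of $K$ in $X$ yields a local $\pi$-base of $x$ in $X$ of size $\leq\kappa$, so $\pichar{X}\leq t(X)pct(X)\leq\kappa$.

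The substance is the bound $d(X)\leq 2^\kappa$, which I would obtain by a homogeneous covering argument paralleling the proof of Theorem~\ref{DLV}, but with compactness weakened to $pct$ and the Lindel\"of reduction weakened to $pwL_c$. First, using $pct(X)\leq\kappa$, fix a compact set $K$ with $\chi(K,X)\leq\kappa$. Since $K$ is a compactum with $t(K)\leq\kappa$, \arhangelskii's Theorem~\ref{arh2.2.4} applied inside $K$ produces a non-empty $G_\kappa$-set $G$ relative to $K$ and a set $H$ with $|H|\leq 2^\kappa$ and $G\sse\overline{H}$; because $\chi(K,X)\leq\kappa$ (so that $K$ itself is a $G_\kappa$-set of $X$), such a $G$ is also a $G_\kappa$-set of $X$. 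Fix a point $p\in G$. Now invoke homogeneity: choosing homeomorphisms $h_x\colon X\to X$ with $h_x(p)=x$, the family $\{h_x[G]:x\in X\}$ is a cover of $X$ by $G_\kappa$-sets (each $x=h_x(p)\in h_x[G]$), and $h_x[G]\sse\overline{h_x[H]}$ with $|h_x[H]|\leq 2^\kappa$.

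The crux is to extract from this cover a subfamily of size at most $2^\kappa$ whose union is dense. This is where $pwL_c(X)\leq\kappa$ enters, in place of the Lindel\"of degree of the $G_\kappa$-modification used in the compact case via Pytkeev's Theorem~\ref{pyt}. The sets $h_x[G]$ are open in the $G_\kappa$-modification $X_\kappa$, and I would prove a weak, piecewise-Lindel\"of analogue of Pytkeev's theorem in the $pct$ setting: a closing-off argument that builds an increasing chain of subsets of $X$, each of size at most $2^\kappa$, where at each stage one uses the tightness bound $t(X)\leq\kappa$ to control closures, the compact pieces supplied by $pct$ to localize, and the defining property of $pwL_c(X)$ to replace each partial open cover by the closures of a $\leq\kappa$-sized subfamily. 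The fixed point of this construction yields a subfamily $\{h_x[G]:x\in A\}$ with $|A|\leq 2^\kappa$ whose union is dense in $X$. Then $S=\Un_{x\in A}h_x[H]$ satisfies $|S|\leq 2^\kappa$ and $\overline{S}\supseteq\Un_{x\in A}\overline{h_x[H]}\supseteq\Un_{x\in A}h_x[G]$, so $\overline{S}=X$ and $d(X)\leq 2^\kappa$. Feeding this into Theorem~\ref{density} gives $|X|\leq d(X)^{\pichar{X}}\leq(2^\kappa)^\kappa=2^\kappa$, as required.

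I expect the main obstacle to be precisely this last reduction: proving the weak-Lindel\"of version of Pytkeev's theorem adapted to $pct$ and $pwL_c$. In the compact case the full strength of $L(X_\kappa)\leq 2^{t(X)\kappa}$ yields an honest subcover, whereas here the weaker invariant $pwL_c(X)$ only delivers closures of small subfamilies, so the argument must be organized so that \emph{density}, rather than covering, is all that is needed; moreover the compact pieces furnished by $pct$ must be threaded through the closing-off so that the $G_\kappa$-structure survives the passage from the compact subspaces to the ambient space $X$.
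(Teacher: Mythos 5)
Your overall framework is sound, and it is in fact the natural route: the survey itself does not prove Theorem~\ref{pwlc} (it only cites \cite{BC2020b}), but the intended architecture is exactly the one you describe, mirroring the proof of Theorem~\ref{DLV}. Your two reductions are correctly set up: $\pi\chi(X)\leq t(X)pct(X)$ for Hausdorff spaces is a known inequality (asserted in the paper right after Theorem~\ref{pwlc}) and your sketch of it via \sapirovskii's $\pi\chi(K)\leq t(K)$ plus a neighborhood base of $K$ in $X$ is the standard argument; the application of Theorem~\ref{arh2.2.4} inside a compact piece $K$ with $\chi(K,X)\leq\kappa$, the observation that $K$ (hence $G$) is a $G_\kappa$-set of $X$, the translation by the homeomorphisms $h_x$, and the final computation $|X|\leq d(X)^{\pi\chi(X)}\leq(2^\kappa)^\kappa=2^\kappa$ via Theorem~\ref{density} are all correct.

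The genuine gap is the step you yourself flag: the extraction, from the cover $\{h_x[G]:x\in X\}$ by $G_\kappa$-sets, of a subfamily of size at most $2^\kappa$ with dense union. That statement is a piecewise-weak-Lindel\"of analogue of Pytkeev's theorem, and it is not a routine adaptation --- it is the entire mathematical content of Theorem~\ref{pwlc}, i.e.\ the new theorem proved in \cite{BC2020b}. Your proposal does not prove it; it only announces that a closing-off argument should exist. Two concrete reasons this cannot be waved through: (i) the existing non-compact generalizations of Pytkeev's theorem (behind Theorems~\ref{ltpct} and \ref{F(X)}, from \cite{CR2012} and \cite{CPR2012}) produce an honest subcover and rely on the Lindel\"of degree together with the inequality $F(X)\leq L(X)t(X)$ to control free sequences; once $L(X)$ is weakened to $pwL_c(X)$, neither the subcover extraction nor that inequality is available, so the old recursion cannot simply be rerun with ``dense union'' in place of ``cover.'' (ii) Invoking $pwL_c(X)$ is not a single appeal to an invariant: its definition requires, at each stage of the transfinite construction, a closed set, an open cover of it, and a \emph{decomposition} of that cover (here, indexed by the compact sets supplied by $pct(X)$, each contributing its $\leq\kappa$ neighborhoods), and one must then show that the $\leq\kappa$-sized subfamilies returned for each piece can be absorbed into the next approximation while tightness alone controls closures at limit stages. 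Organizing this so that density survives the limit is precisely the delicate part, and without it your argument is a correct reduction of the theorem to its hardest ingredient rather than a proof of it.
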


While it is clear that Theorem~\ref{pwlc} is an improvement of Theorem~\ref{ltpct}, it is also an improvement of Theorem~\ref{alc} as it can be shown that $pwL_c(X)\leq aL_c(X)$. Furthermore, it is a variation of Theorem~\ref{ER}; that is, $2^{c(X)\pi\chi(X)}$ is a bound for the cardinality of every homogeneous Hausdorff space. This is because $pwL_c(X)\leq c(X)$ and $\pi\chi(X)\leq t(X)pct(X)$ for Hausdorff spaces.

In \cite{BC2020b}, a consistent improvement of Theorem~\ref{ltpct} was given using the linearly Lindel\"of degree $lL(X)$. A space $X$ is \emph{linearly Lindel\"of} provided that every increasing open cover of $X$ has a countable subcover. More generally, we define the \emph{linear Lindel\"of degree} $lL(X)$ of $X$ as the smallest cardinal $\kappa$ such that every increasing open cover of $X$ has a subcover of size not exceeding $\kappa$. Equivalently, $lL(X)\leq\kappa$ if every open cover of $X$ has a subcover $\scr{U}$ such that $|\scr{U}|$ has cofinality at most $\kappa$. 

\begin{theorem}[Bella, C. \cite{BC2020b}, 2020]\label{lL}
Assume $2^\kappa<\kappa^{+\omega}$ or $2^{<2^\kappa}=2^\kappa$. If $X$ is Hausdorff, homogeneous, and $\kappa=lL(X)F(X)pct(X)$, then $|X|\leq 2^\kappa$.
\end{theorem}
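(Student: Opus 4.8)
The plan is to prove Theorem~\ref{lL} by following the same architecture as the proof of de la Vega's Theorem~\ref{DLV} given in the excerpt, but replacing each compactness-driven ingredient with its $lL$/$F$/$pct$-analogue and invoking the set-theoretic hypothesis precisely where a cardinal-arithmetic collapse is needed. The engine of the de la Vega proof is: (i) produce a single non-empty $G_\kappa$-set $G$ sitting inside the closure of a small set; (ii) translate $G$ around by the homeomorphisms $h_x$ to cover $X$ by small-closure $G_\kappa$-sets; (iii) use a Lindelöf-type bound on the $G_\kappa$-modification to extract a subcover of size $\le 2^\kappa$; and (iv) conclude $d(X)\le 2^\kappa$ and feed this into Theorem~\ref{density}. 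I would reproduce exactly this four-step skeleton.

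\smallskip

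First I would set $\kappa=lL(X)F(X)pct(X)$ and secure the analogue of \arhangelskii's Theorem~\ref{arh2.2.4} in this non-compact setting: a non-empty $G_\kappa$-set (equivalently, a set of character $\le\kappa$) contained in the closure of a set of size $\le 2^\kappa$. This is where the $pct(X)$ factor earns its place, since $X$ is covered by compact sets $K$ with $\chi(K,X)\le pct(X)\le\kappa$, and the free-sequence/tightness machinery ($F(X)\le\kappa$) supplies the small dense-in-$G$ witness inside such a $K$; I expect this to be the genuinely delicate step and the main obstacle, and I would isolate it as a lemma (or cite the corresponding preparatory result from \cite{BC2020b}). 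Next, homogeneity does its usual work: fix $p\in G$ and homeomorphisms $h_x:X\to X$ with $h_x(p)=x$, so that $\scr{G}=\{h_x[G]:x\in X\}$ is a cover of $X$ by $G_\kappa$-sets, each contained in the closure of a set of size $\le 2^\kappa$.

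\smallskip

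The second obstacle is replacing Pytkeev's Theorem~\ref{pyt}, which is the step that genuinely used compactness. Here I would use a covering bound for the $G_\kappa$-modification phrased in terms of $lL(X)$ and $F(X)$ rather than $L(X)$ and $t(X)$: one shows that from the cover $\scr{G}$ one can extract a subfamily $\scr{G}'\sse\scr{G}$ with $|\scr{G}'|\le 2^\kappa$ that still covers $X$. This is exactly the juncture at which the arithmetic hypothesis ``$2^\kappa<\kappa^{+\omega}$ or $2^{<2^\kappa}=2^\kappa$'' is consumed: a linearly-Lindelöf cover yields a subcover whose cardinality has cofinality $\le lL(X)\le\kappa$, and the hypothesis is precisely what forces such a subcover of size $\le 2^\kappa$ (in the linearly-Lindelöf world one controls cofinalities rather than cardinalities outright, so a gap between $2^\kappa$ and the next cardinals of small cofinality must be excluded). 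Having extracted $\scr{G}'$, set $H=\Un_{G'\in\scr{G}'}H_{G'}$ where $G'\sse\overline{H_{G'}}$ and $|H_{G'}|\le 2^\kappa$; then $X=\Un\scr{G}'\sse\overline{H}$, so $H$ is dense and $|H|\le 2^\kappa\cdot 2^\kappa=2^\kappa$, giving $d(X)\le 2^\kappa$.

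\smallskip

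Finally I would close exactly as in Theorem~\ref{DLV}: by Theorem~\ref{density}, $|X|\le d(X)^{\pi\chi(X)}$, and since $\pi\chi(X)\le t(X)pct(X)\le F(X)pct(X)\cdot L$-type factors are all $\le\kappa$ for Hausdorff spaces (using $F(X)\le\kappa$ together with $pct(X)\le\kappa$ to bound $\pi\chi(X)\le\kappa$), we obtain
\[
|X|\le d(X)^{\pi\chi(X)}\le (2^\kappa)^\kappa=2^\kappa,
\]
as required. The two places requiring real care are the $G_\kappa$-set extraction lemma (step one, the \arhangelskii-type input adapted to $lL$/$F$/$pct$) and the cofinality-to-cardinality passage in the covering step, which is the sole consumer of the set-theoretic hypothesis and the reason the theorem is only consistently—rather than outright in ZFC—an improvement of Theorem~\ref{ltpct}.
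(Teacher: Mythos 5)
A preliminary caveat: the survey contains no proof of Theorem~\ref{lL} --- it is quoted from \cite{BC2020b} --- so your proposal can only be judged against the architecture the survey lays out for Theorems~\ref{DLV}, \ref{ltpct}, and \ref{F(X)}. At that level your skeleton is the natural one, and your first step is sound: cover $X$ by compact sets $K$ with $\chi(K,X)\le pct(X)\le\kappa$; in a Hausdorff space such a $K$ is the intersection of at most $\kappa$ open sets, free sequences in $K$ are free in $X$ (so $t(K)=F(K)\le F(X)\le\kappa$), and applying Theorem~\ref{arh2.2.4} inside $K$ does yield a non-empty $G_\kappa$-set of $X$ contained in the closure of a set of size at most $2^\kappa$. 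The closing step also reaches the right conclusion, but your justification for it is garbled: what is needed (and true) is $\pi\chi(X)\le F(X)pct(X)$, obtained from \sapirovskii's inequality $\pi\chi(K)\le t(K)=F(K)$ inside compact pieces together with the standard lemma $\pi\chi(x,X)\le\pi\chi(x,K)\cdot\chi(K,X)$; the inequality $t(X)\le F(X)$ that your chain passes through is false in general (consider $X=\omega_1$, where $t(X)=\omega$ while $F(X)=\omega_1$).

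The genuine gap is the covering step, which is the entire mathematical content of the theorem, and the one mechanism you propose for it is incorrect. You assert that $lL(X)\le\kappa$ yields a subcover whose cardinality $\lambda$ satisfies $\mathrm{cf}(\lambda)\le\kappa$, and that the hypothesis ``$2^\kappa<\kappa^{+\omega}$ or $2^{<2^\kappa}=2^\kappa$'' then forces $\lambda\le 2^\kappa$. It does not: under either hypothesis there exist cardinals above $2^\kappa$ of cofinality at most $\kappa$ (under the first, already $\kappa^{+\omega}$; in any case $(2^\kappa)^{+\omega}$), so ``small cofinality'' can never be converted into ``size at most $2^\kappa$'' by cardinal arithmetic alone. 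There are two further problems. First, $lL(X)$ is an invariant of open covers of $X$, whereas $\scr{G}=\{h_x[G]:x\in X\}$ is a cover by $G_\kappa$-sets; converting linear Lindel\"of information into a bound on $L(X_\kappa)$ is precisely what Pytkeev-type theorems (\ref{pyt}, \ref{pytimprove}) accomplish in the compact case via a closing-off argument, and it is inside such an argument --- not in a one-line cofinality observation --- that $F(X)$, $pct(X)$, and the arithmetic hypotheses must actually be consumed. (Note, for instance, that both hypotheses imply $2^\kappa$ is regular: under the first it is a successor cardinal, and under the second $\mathrm{cf}(2^\kappa)<2^\kappa$ would give $(2^\kappa)^{\mathrm{cf}(2^\kappa)}=2^{\kappa\cdot \mathrm{cf}(2^\kappa)}\le 2^{<2^\kappa}=2^\kappa$, contradicting K\"onig's theorem; it is facts of this sort, powering a transfinite closing-off construction or a finite induction up $\kappa^+,\kappa^{++},\dots,\kappa^{+n}=2^\kappa$, that such a proof needs.) Second, outsourcing this step to ``the corresponding preparatory result from \cite{BC2020b}'' is not an admissible move in a blind proof, since that preparatory result \emph{is} the theorem. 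As written, your proposal establishes only a reduction of Theorem~\ref{lL} to an unproved $lL$-analogue of Pytkeev's theorem, surrounded by correct but routine steps.
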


Our last improvement of Theorem~\ref{ltpct} gives a bound for the cardinality of an open set in a power homogeneous space.

\begin{theorem}[Bella, C. \cite{BC}, 2018]\label{openset}
If $X$ is a power homogeneous Hausdorff space and $U\sse X$ is an open set, then $|U|\leq 2^{L(\overline{U})t(X)pct(X)}$
\end{theorem}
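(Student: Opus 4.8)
The plan is to adapt the proof strategy of Theorem~\ref{DLV} (de la Vega's Theorem) to bound an open set $U$ rather than the whole space, replacing the compactness of $X$ with the Lindel\"of degree $L(\overline{U})$ of the closure of $U$. First I would set $\kappa = L(\overline{U})t(X)pct(X)$. The power homogeneity of $X$ provides, after fixing a point $p$, a supply of homeomorphisms $h_x:X^\lambda\to X^\lambda$ moving $p$ to each point; but since we are working with an open subset of $X$ itself, the real task is to transfer the homogeneity information from $X^\lambda$ down to $X$ via projection maps, exactly as invoked in Theorem~\ref{ridHausdorff}. I would rely on the machinery behind that theorem (Corollary 3.3 in \cite{Rid06}) to obtain homeomorphisms interacting correctly with local $\pi$-bases at the level of $X$, so that a one-to-one map argument of the type in Theorem~\ref{density} becomes available on $U$.

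The heart of the argument should be a density/covering estimate localized to $\overline{U}$. The point-wise compactness type $pct(X)\leq\kappa$ lets us cover $X$ (and in particular $\overline{U}$) by compact sets $K$ with $\chi(K,X)\leq\kappa$. On each such compact piece I would apply the compact-space tools—\arhangelskii's Theorem~\ref{arh2.2.4} to locate a non-empty $G_\kappa$-set sitting inside the closure of a set of size at most $2^\kappa$, and Pytkeev's Theorem~\ref{pyt} to control the Lindel\"of degree of the $G_\kappa$-modification. Translating these $G_\kappa$-sets around by the homeomorphisms $h_x$ produces a cover of $\overline{U}$ by subseparable $G_\kappa$-sets, and the Lindel\"of-type hypothesis $L(\overline{U})\leq\kappa$ (together with Pytkeev's bound $2^{t(X)\cdot\kappa}\leq 2^\kappa$) extracts a subcover of size at most $2^\kappa$. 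The union of the associated small dense sets then witnesses a dense subset $D$ of $U$ with $|D|\leq 2^\kappa$.

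Finally I would close the argument with the cardinality bound for open sets already available in the power homogeneous setting, namely Theorem~\ref{open}: if $U\sse\overline{D}$ then $|U|\leq |D|^{\pi\chi(X)}$. Since $\pi\chi(X)\leq t(X)pct(X)\leq\kappa$ for Hausdorff spaces and $|D|\leq 2^\kappa$, this gives
\[
|U|\leq |D|^{\pi\chi(X)}\leq (2^\kappa)^{\kappa}=2^\kappa=2^{L(\overline{U})t(X)pct(X)},
\]
as desired.

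I expect the main obstacle to be the localization step: in de la Vega's proof the compactness of the whole space is what makes Pytkeev's theorem directly applicable and what guarantees that the $G_\kappa$-cover admits a small subcover. Here compactness is absent, so the technical work is in confining all the closing-off arguments to $\overline{U}$ and using $L(\overline{U})$ in place of global compactness—while still being able to invoke Theorems~\ref{arh2.2.4} and \ref{pyt}, which are stated for compacta, on the compact pieces supplied by $pct(X)\leq\kappa$. Making the interaction between these compact pieces, the homeomorphisms coming from power homogeneity, and the Lindel\"of degree of $\overline{U}$ fit together cleanly is the delicate part; the rest is the now-standard one-to-one map estimate from Theorems~\ref{density} and \ref{open}.
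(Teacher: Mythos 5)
Your overall architecture is sound and is the natural route to this theorem: reduce everything to producing a set $D$ with $|D|\leq 2^\kappa$ and $U\sse\overline{D}$, where $\kappa=L(\overline{U})t(X)pct(X)$, and then finish with Theorem~\ref{open} together with the inequality $\pi\chi(X)\leq t(X)pct(X)$ for Hausdorff spaces; your closing computation $(2^\kappa)^\kappa=2^\kappa$ is correct. Both genuine gaps sit in the construction of $D$. The first is the translation step. You write that ``translating these $G_\kappa$-sets around by the homeomorphisms $h_x$'' produces a cover of $\overline{U}$ by subseparable $G_\kappa$-sets, but in a merely power homogeneous space there are no homeomorphisms $h_x:X\to X$: the homeomorphisms act on $X^\lambda$, and the projection back to $X$ of the image of (a preimage of) a compact $G_\kappa$-set is not, without substantial work, again a compact $G_\kappa$-set of character at most $\kappa$ contained in the closure of a set of size at most $2^\kappa$. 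Carrying out this transfer is precisely the hard content of the power homogeneous de la Vega-type theorems in \cite{avr07} and \cite{CR2012}, and it is not what Corollary 3.3 of \cite{Rid06} provides: that corollary concerns local $\pi$-bases, and its role in your argument is already subsumed in the black-boxed Theorem~\ref{open}. In the homogeneous case the translation step is trivial --- that is exactly the proof of Theorem~\ref{DLV} --- but the power homogeneous case is the whole point here, so as written your cover of $\overline{U}$ does not yet exist.

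The second gap is the subcover extraction. Theorem~\ref{pyt} is a statement about compacta, and applying it to each compact piece $K$ of the $pct(X)$-cover yields, for each $K$ separately, a subfamily of size at most $2^\kappa$ covering $K\meet\overline{U}$; but there may be far more than $2^\kappa$ pieces, and the only way to bring $L(\overline{U})\leq\kappa$ to bear is to pass to open expansions of the pieces. An open set $W\supseteq K$ contains points of $\overline{U}$ that lie outside $K$ and hence are not covered by the subfamily chosen for $K$, so ``Lindel\"of plus piecewise Pytkeev,'' combined naively, does not produce a cover of $\overline{U}$ at all. What is actually needed is a Pytkeev-type theorem for the $G_\kappa$-modification of non-compact spaces, of the form $L(Y_\kappa)\leq 2^{L(Y)F(Y)pct(Y)\cdot\kappa}$, which is the subject of \cite{CPR2012}; one applies it to the closed subspace $Y=\overline{U}$, noting that $pct(\overline{U})\leq pct(X)$, $t(\overline{U})\leq t(X)$, and $F(\overline{U})\leq L(\overline{U})t(X)\leq\kappa$. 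That theorem is itself a nontrivial closing-off argument, not a corollary of the compact case. With these two ingredients --- the power homogeneous transfer lemma and the non-compact Pytkeev theorem --- your outline closes up; without them, the two central steps of the proposal are unsupported.
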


The next four results from \cite{SS2018}, \cite{BC}, and \cite{BCG2020} represent extensions of de la Vega's Theorem in a different direction using the invariants $wL(X)$ or $wL_c(X)$. The \emph{weak Lindel\"of degree} of a space $X$ is the least infinite cardinal $\kappa$ such that every open cover $\scr{U}$ of $X$ has a subfamily $\scr{V}$ such that $|\scr{V}|\leq\kappa$ and $X=\overline{\Un\scr{V}}$. The invariant $wL_c(X)$, the \emph{weak Lindel\"of degree with respect to closed sets}, is the smallest infinite cardinal $\kappa$ such that for every closed subset $C$ of $X$ and every collection $\scr{U}$ of open sets in $X$ that cover $C$, there is a subcollection $\scr{V}$ of $\scr{U}$ such that $|\scr{V}|\leq\kappa$ and $C\sse \overline{\Un\scr{V}}$. It is clear that $wL(X)\leq wL_c(X)\leq aL_c(X)$.

\begin{theorem}[Spadaro, Szeptycki \cite{SS2018}, 2018]\label{initial}
If $X$ is an initially $\kappa$-compact power homogeneous $T_3$ space then $|X|\leq 2^{F(X)wL_c(X)}$.
\end{theorem}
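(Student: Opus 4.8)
The plan is to reduce the inequality to the power-homogeneous density bound of Ridderbos (Theorem \ref{ridHausdorff}), so that it suffices to establish the two estimates $d(X)\le 2^\kappa$ and $\pi\chi(X)\le\kappa$, where $\kappa=F(X)wL_c(X)$; granting both, Theorem \ref{ridHausdorff} gives $|X|\le d(X)^{\pi\chi(X)}\le (2^\kappa)^\kappa=2^\kappa$. The overall shape follows the now-familiar template of the simplified de la Vega proof (Theorem \ref{DLV}): produce a single ``small'' closed set contained in the closure of a set of size at most $2^\kappa$, translate it across $X$ by homeomorphisms supplied by homogeneity, and then extract a subfamily of these translates that is small enough to bound the density. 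The two features that distinguish the present setting are that the Lindel\"of-type hypothesis is only the very weak $wL_c(X)\le\kappa$, and that compactness is replaced by initial $\kappa$-compactness; both are reconciled by exploiting complete accumulation points.

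First I would prove an \arhangelskii-type closure lemma adapted to initial $\kappa$-compactness, in the spirit of Theorems \ref{arh2.2.4} and \ref{variation}: namely, that $X$ contains a non-empty compact set $G$ with $\chi(G,X)\le\kappa$ and a set $H$ with $|H|\le 2^\kappa$ such that $G\subseteq\overline{H}$. Here initial $\kappa$-compactness furnishes complete accumulation points for sets of size $\le\kappa$ and, together with $F(X)\le\kappa$, keeps free sequences short; a closed set of character $\le\kappa$ in an initially $\kappa$-compact space is compact, which will also yield $pct(X)\le\kappa$ once the translates of $G$ are seen to cover $X$ (which is automatic, since $p\in G$ forces $x\in h_x[G]$). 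Combined with the inequality $\pi\chi(X)\le t(X)pct(X)$ for Hausdorff spaces noted earlier, and the fact that $t(X)\le\kappa$ in this compact-like setting (the analogue of $t(X)=F(X)$ for compacta), this delivers the second required estimate $\pi\chi(X)\le\kappa$.

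Next, using power homogeneity together with Ridderbos's refinement of the translating homeomorphisms (Corollary 3.3 of \cite{Rid06}), I would fix $p\in G$ and obtain homeomorphisms $h_x\colon X\to X$ with $h_x(p)=x$ that respect the relevant character structure, so that $\{h_x[G]:x\in X\}$ is a cover of $X$ by compact sets of character $\le\kappa$, each satisfying $h_x[G]\subseteq\overline{h_x[H]}$ with $|h_x[H]|\le 2^\kappa$. The density bound $d(X)\le 2^\kappa$ then reduces to extracting a subfamily $\{h_{x_i}[G]:i\in I\}$ with $|I|\le 2^\kappa$ and $X=\overline{\bigcup_{i\in I}h_{x_i}[G]}$: once this is done, $D=\bigcup_{i\in I}h_{x_i}[H]$ is dense because $\overline{\bigcup_i h_{x_i}[G]}\subseteq\overline{\bigcup_i\overline{h_{x_i}[H]}}=\overline{D}$, and $|D|\le 2^\kappa$.

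The hard part will be exactly this extraction step, which is the weak-Lindel\"of, initially-$\kappa$-compact analogue of Pytkeev's covering theorem (Theorems \ref{pyt} and \ref{pytimprove}). With only $wL_c(X)\le\kappa$ available, a covering argument produces subfamilies whose unions are merely \emph{dense} rather than genuinely covering; the role of initial $\kappa$-compactness is to upgrade these weak covers to the required form by forcing every point outside a partial union to be a complete accumulation point of a short free sequence, which $F(X)\le\kappa$ forbids. Managing this closing-off argument, by iterating the $wL_c$ selection transfinitely while keeping the accumulated index set of size at most $2^\kappa$ and verifying that initial $\kappa$-compactness terminates the construction, is where the real work lies, and it is the analogue in this setting of the elaborate closing-off embedded in Pytkeev's theorem. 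Once the density and $\pi$-character bounds are in hand, the conclusion $|X|\le 2^{F(X)wL_c(X)}$ follows immediately from Theorem \ref{ridHausdorff}.
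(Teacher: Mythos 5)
The survey never actually proves Theorem~\ref{initial} --- it is quoted from \cite{SS2018} without proof --- so your argument has to stand on its own, and it does not: the central gap is that you treat power homogeneity as if it were homogeneity. The homeomorphisms $h_x\colon X\to X$ with $h_x(p)=x$ that you use to translate the \arhangelskii-type set $G$ around the space exist precisely when $X$ is homogeneous; a power homogeneous space need not admit any of them ($[0,1]$ and $\omega+1$ are the standard examples). Corollary~3.3 of \cite{Rid06}, which you invoke to repair this, does something different: as this survey itself describes, it provides homeomorphisms of the \emph{power} $X^\kappa$ with extra properties relating local $\pi$-bases of $X^\kappa$ and of $X$, not self-homeomorphisms of $X$. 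What power homogeneity actually yields are continuous maps $f_x\colon X\to X$ with $f_x(p)=x$ (embed diagonally into $X^\mu$, apply a homeomorphism of $X^\mu$, project onto a coordinate). Such maps do still give a cover $\{f_x[G]:x\in X\}$ with $f_x[G]\subseteq\overline{f_x[H]}$, but they need not preserve character: $\chi(f_x[G],X)\le\kappa$ can fail for a merely continuous image. That kills all three uses you make of the translated family: the cover is no longer by small-character sets, so the Pytkeev-style extraction you plan as the ``hard part'' has no hypothesis to act on; $pct(X)\le\kappa$ does not follow; and hence neither does $\pi\chi(X)\le t(X)pct(X)\le\kappa$. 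Overcoming exactly this difficulty --- controlling characters of images under projections of homeomorphisms of powers, typically via tube-lemma and closed-projection arguments --- is the real content of the power homogeneous results (\cite{avr07}, \cite{CR2012}, \cite{CPR2012}, \cite{SS2018}), and it is the step your proposal waves through. As written, your outline proves at most the homogeneous case of the theorem.

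A second, independent error: you assert that a closed set of character $\le\kappa$ in an initially $\kappa$-compact space is compact. This is false --- $\omega_1$ is initially $\omega$-compact and is a closed subset of itself of (trivially) countable character, yet it is not compact; closedness gives initial $\kappa$-compactness of $G$, nothing more. Since genuine compactness of $G$ is what you need both for $pct(X)\le\kappa$ and for any tube-lemma argument in a power (the tube lemma requires compact factors), this claim is load-bearing and cannot be waved in as a parenthetical. Your top-level skeleton --- establish $\pi\chi(X)\le\kappa$ and $d(X)\le 2^\kappa$ for $\kappa=F(X)wL_c(X)$ and finish with Theorem~\ref{ridHausdorff} --- is a sensible reduction, and finishing with Ridderbos's theorem is indeed the natural way to let power homogeneity enter safely; but both estimates feeding into it remain unproven in your argument, and proving them under power homogeneity rather than homogeneity is where the actual difficulty of Theorem~\ref{initial} lies.
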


\begin{theorem}[Bella, C. \cite{BC}, 2018]\label{piLindelof}
If $X$ is a regular power homogeneous space and with a $\pi$-base $\scr{B}$ such that $\overline{B}$ is Lindel\"of for all $B\in\scr{B}$, then $|X|\leq 2^{wL(X)t(X)pct(X)}$.
\end{theorem}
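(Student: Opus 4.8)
The plan is to follow the template of the proof of de la Vega's Theorem~\ref{DLV}: reduce the cardinality bound to a density bound via the power-homogeneous inequality $|X|\le d(X)^{\pi\chi(X)}$ of Theorem~\ref{ridHausdorff}, and then build a dense set of size at most $2^\kappa$ by covering $X$ with translates of a single ``small-character'' set and extracting a dense subfamily. Throughout set $\kappa = wL(X)t(X)pct(X)$. Since a regular space is Hausdorff, Theorem~\ref{ridHausdorff} applies, and recalling that $\pi\chi(X)\le t(X)pct(X)$ for Hausdorff spaces we have $\pi\chi(X)\le\kappa$. Hence once we show $d(X)\le 2^\kappa$ we are done, because $|X|\le d(X)^{\pi\chi(X)}\le (2^\kappa)^\kappa=2^\kappa$. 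So the entire content of the theorem is the density estimate $d(X)\le 2^\kappa$, exactly as in Theorem~\ref{DLV}, where the work went into proving $d(X)\le 2^{t(X)}$.

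First I would produce the object to be translated. Using $pct(X)\le\kappa$ together with a regular, $pct$-flavored analogue of \arhangelskii's Theorem~\ref{arh2.2.4} (this is where regularity is used), I would obtain a non-empty compact set $G$ with $\chi(G,X)\le\kappa$ and a set $H_0$ with $|H_0|\le 2^\kappa$ such that $G\sse\overline{H_0}$. Next, invoking power homogeneity as encoded in Corollary~3.3 of \cite{Rid06}, I would fix $p\in G$ and choose homeomorphisms $h_x:X\to X$ with $h_x(p)=x$ that additionally respect local $\pi$-bases in the manner required by Ridderbos' argument. Then $\scr{G}=\{h_x[G]:x\in X\}$ is a cover of $X$ by compact sets, each of character at most $\kappa$ and each contained in the closure $\overline{h_x[H_0]}$ of a translate of $H_0$ of size at most $2^\kappa$.

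The heart of the argument is then a weak-Lindel\"of analogue of Pytkeev's Theorem~\ref{pyt}: from the cover $\scr{G}$ I would extract a subfamily $\scr{G}^\prime$ with $|\scr{G}^\prime|\le 2^\kappa$ and $\overline{\Un\scr{G}^\prime}=X$. This is where the remaining hypotheses do their work. The $\pi$-base $\scr{B}$ whose members have Lindel\"of closures plays the role that compactness plays in Pytkeev's closing-off argument: whenever a partial union fails to be dense, regularity and $\scr{B}$ place some $\overline{B}$ with $\overline{B}$ Lindel\"of inside the complement, and the Lindel\"of property bounds how many further translates are needed to fill it, while $wL(X)\le\kappa$ keeps the total count from exceeding $2^\kappa$ across the $\kappa$ many stages of the recursion. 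Granting this extraction, set $H=\Un\{h_x[H_0]:h_x[G]\in\scr{G}^\prime\}$; then $|H|\le 2^\kappa\cdot 2^\kappa=2^\kappa$ and $X=\overline{\Un\scr{G}^\prime}\sse\overline{H}$, so $H$ is dense and $d(X)\le 2^\kappa$, completing the proof by the reduction above.

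I expect the main obstacle to be precisely the weak-Lindel\"of extraction in the third paragraph, i.e.\ formulating and proving the correct Pytkeev-type covering theorem in which $wL(X)$ and the Lindel\"of closures of a $\pi$-base substitute for the compactness of $X$ used in Theorem~\ref{pyt}; a secondary technical point is verifying that the homeomorphisms supplied by the power-homogeneous structure can be taken to interact correctly with the $\pi$-character control, so that the translated cover $\scr{G}$ genuinely consists of small-character sets. The reduction via Theorem~\ref{ridHausdorff} and the bookkeeping with $H$ are routine by comparison.
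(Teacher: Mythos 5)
Your reduction step is sound, and the overall architecture (prove $d(X)\le 2^\kappa$ by translating one small set around the space and extracting a dense subfamily, then finish with Theorem~\ref{ridHausdorff} and $\pi\chi(X)\le t(X)pct(X)$) is the natural adaptation of the survey's proof of Theorem~\ref{DLV}. But there are two genuine gaps, and the first is fatal as written. You ``fix $p\in G$ and choose homeomorphisms $h_x:X\to X$ with $h_x(p)=x$'': such homeomorphisms exist exactly when $X$ is homogeneous, and a power homogeneous space need not be homogeneous --- $\omega+1$ is power homogeneous, yet no homeomorphism of $\omega+1$ takes the isolated point $0$ to the limit point $\omega$. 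Corollary~3.3 of \cite{Rid06} does not supply such maps; as \S3 of the survey explains, it gives homeomorphisms of the power $X^\kappa$ with properties relating local $\pi$-bases of $X^\kappa$ to those of $X$, and all information must then be pulled down to $X$ through the projection maps. Hence the cover $\scr{G}=\{h_x[G]:x\in X\}$ cannot be formed at all; the de la Vega template works verbatim only for homogeneous spaces, and spreading the property ``compact, of character $\le\kappa$, contained in the closure of a set of size $\le 2^\kappa$'' from one point to every point of a merely power homogeneous space is a substantial argument in its own right (this is precisely the machinery developed in \cite{avr07} and \cite{CR2012}), not the ``secondary technical point'' your closing paragraph calls it.

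The second gap is the extraction lemma you yourself flag as the heart, and the mechanism you sketch for it cannot close. The local step is fine: if $B\in\scr{B}$ lies in the complement of $\overline{\bigcup\scr{G}^\prime}$, then $\overline{B}$ is Lindel\"of, tightness and $pct$ pass to closed subspaces, so a Pytkeev-type theorem for non-compact spaces (the kind underlying Theorem~\ref{ltpct}; see \cite{CPR2012}) covers $\overline{B}$ by at most $2^\kappa$ of the given sets. (Note this needs only Hausdorff; likewise your \arhangelskii-type step needs no regularity, since Theorem~\ref{arh2.2.4} can be applied inside a single compact set of character $\le\kappa$ furnished by $pct(X)\le\kappa$.) The problem is termination. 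All the recursion produces is a pairwise disjoint family $\{B_\alpha\}$ of $\pi$-base elements, since each $B_\beta$ avoids the closures of all earlier stages while $\overline{B_\alpha}$ is swallowed at stage $\alpha+1$. Bounding the length of such a run requires a bound on the cellularity of $X$, and $wL(X)\le\kappa$ gives none: $\beta D$ for a discrete $D$ of size $(2^\kappa)^+$ is compact, hence weakly Lindel\"of, yet has cellularity $(2^\kappa)^+$. The weak Lindel\"of degree acts on open covers of $X$, and your recursion never produces an open cover to which it could be applied; the assertion that $wL(X)$ ``keeps the total count from exceeding $2^\kappa$'' is therefore not a deferred verification but the entire missing content of the theorem. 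So even if $X$ were assumed homogeneous, the proposal is open exactly where the theorem is.
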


As locally compact spaces satisfy the hypotheses in Theorem~\ref{piLindelof}, we have the following corollary.

\begin{corollary}[Bella, C. \cite{BC}, 2018]\label{lcphup}
If $X$ is a locally compact power homogeneous space then $|X|\leq 2^{wL(X)t(X)}$.
\end{corollary}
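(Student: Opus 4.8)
The plan is to deduce Corollary~\ref{lcphup} directly from Theorem~\ref{piLindelof} by verifying that a locally compact power homogeneous space meets the hypotheses of that theorem and then simplifying the exponent. First I would observe that a locally compact Hausdorff space is regular (indeed Tychonoff), so the regularity requirement of Theorem~\ref{piLindelof} holds automatically.

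Next I would produce the required $\pi$-base. In a locally compact Hausdorff space every point $x$ has an open neighborhood $U$ whose closure $\overline{U}$ is compact, and the collection $\scr{B}$ of all such open sets is in fact a base for $X$, hence a $\pi$-base. Since every compact set is Lindel\"of, $\overline{B}$ is Lindel\"of for each $B\in\scr{B}$, which is precisely the hypothesis imposed on the $\pi$-base in Theorem~\ref{piLindelof}.

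The third ingredient is the value of the point-wise compactness type. As already noted in this survey, every locally compact space has countable point-wise compactness type, so $pct(X)=\omega$; concretely, $X$ is covered by compact sets of countable character. Substituting into Theorem~\ref{piLindelof} yields $|X|\leq 2^{wL(X)t(X)pct(X)}$, and since $pct(X)=\omega$ while $t(X)$ is an infinite cardinal we have $wL(X)t(X)pct(X)=wL(X)t(X)$, whence $|X|\leq 2^{wL(X)t(X)}$.

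There is no genuine obstacle here; the argument is simply a verification of the hypotheses of Theorem~\ref{piLindelof}. The only points needing (routine) justification are that local compactness simultaneously supplies regularity, a $\pi$-base with Lindel\"of closures, and countable $pct$. The mild subtlety worth flagging is the implicit regularity (Hausdorff) assumption required to invoke Theorem~\ref{piLindelof}, since the survey does not presuppose separation axioms, together with the absorption of the factor $pct(X)=\omega$ into the infinite cardinal $t(X)$ when collapsing the exponent.
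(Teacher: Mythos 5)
Your proposal is correct and matches the paper's own derivation: the survey obtains Corollary~\ref{lcphup} precisely by noting that a locally compact space satisfies the hypotheses of Theorem~\ref{piLindelof} (regularity, a $\pi$-base of open sets with compact, hence Lindel\"of, closures) and that $pct(X)=\omega$, so the exponent $wL(X)t(X)pct(X)$ collapses to $wL(X)t(X)$. Your explicit verification of these hypotheses, including the remark about the implicit Hausdorff/regularity convention, is exactly the content the paper leaves to the reader.
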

The above theorem indicates that the compactness condition in de la Vega's Theorem can be replaced with another pair of conditions: locally compact and weakly Lindel\"of. It turns out that Corollary~\ref{lcphup} can be given an improved conclusion. This was demonstrated in \cite{BCG2020}.

\begin{theorem}[Bella, C., Gotchev \cite{BCG2020}, 2020]\label{lcphdown}
If $X$ is a locally compact power homogeneous space then $|X|\leq wL(X)^{t(X)}$.
\end{theorem}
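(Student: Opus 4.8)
The plan is to reduce the bound to a density estimate and then feed that estimate into the power-homogeneous version of de la Vega's density theorem, Theorem~\ref{ridHausdorff}. First I would record two structural facts about a locally compact space $X$. Since $X$ is locally compact Hausdorff it is Tychonoff, and as noted in the text every locally compact space has $pct(X)=\omega$. Moreover $\pi\chi(X)\leq t(X)$: each point $x$ has a compact neighborhood $K$ with $x\in\operatorname{int}K$, and because $\pi$-character is determined inside any neighborhood we have $\pi\chi(x,X)=\pi\chi(x,\operatorname{int}K)=\pi\chi(x,K)$; applying \sapirovskii's inequality $\pi\chi\leq t$ inside the compactum $K$, together with the fact that the tightness at a point cannot increase when passing to the subspace $K$, gives $\pi\chi(x,X)\leq t(x,K)\leq t(x,X)\leq t(X)$. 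By Theorem~\ref{ridHausdorff} we then have
$$|X|\leq d(X)^{\pi\chi(X)}\leq d(X)^{t(X)},$$
so the whole theorem follows once I establish the density bound $d(X)\leq wL(X)^{t(X)}$, since $\left(wL(X)^{t(X)}\right)^{t(X)}=wL(X)^{t(X)}$.

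To bound the density I would imitate the proof of de la Vega's Theorem~\ref{DLV}, replacing its two compactness-specific ingredients. Write $\tau=t(X)$ and $\lambda=wL(X)$. Applying \arhangelskii's Theorem~\ref{arh2.2.4} inside a compact neighborhood of a point, I obtain a non-empty $G_\tau$-set $G$ and a set $H$ with $|H|\leq 2^\tau$ and $G\subseteq\overline{H}$. Fixing $p\in G$ and invoking power homogeneity through the homeomorphism machinery of \cite{Rid06} (Corollary~3.3), I obtain homeomorphisms $h_x:X\to X$ with $x\in h_x[G]$, so that $\scr{G}=\{h_x[G]:x\in X\}$ is a cover of $X$ by $G_\tau$-sets and $h_x[G]\subseteq\overline{h_x[H]}$ for every $x$.

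The crux is the thinning step, which is the weak-Lindel\"of analogue of the Pytkeev step in de la Vega's proof: whereas Theorem~\ref{DLV} uses Lindel\"ofness of the $G_\kappa$-modification to extract a subcover of $\scr{G}$ of size $2^\tau$, here I must extract from $\scr{G}$ a subfamily indexed by a set $S$ with $|S|\leq\lambda$ whose union is dense in $X$. Granting such an $S$, the set $D=\bigcup_{x\in S}h_x[H]$ satisfies $X=\overline{\bigcup_{x\in S}h_x[G]}\subseteq\overline{D}$, so $D$ is dense and $|D|\leq\lambda\cdot 2^\tau\leq wL(X)^{t(X)}$, yielding $d(X)\leq wL(X)^{t(X)}$ as required.

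I expect this thinning step to be the main obstacle, for two reasons. First, weak Lindel\"ofness is a property of open covers, while $\scr{G}$ is a cover by $G_\tau$-sets; bridging this gap---either by establishing a weak-Lindel\"of covering degree for the $G_\tau$-modification, or by fattening the $h_x[G]$ to open sets while transferring density back through the small sets $h_x[H]$---is precisely what forces the base to be $wL(X)$ rather than $2^{wL(X)}$ (contrast Corollary~\ref{lcphup}). Second, all of this must be carried out in the power-homogeneous, not merely homogeneous, setting, where the homeomorphisms $h_x$ and the sets $G,H$ live naturally in a power $X^\kappa$ and must be transported to $X$ through the projection maps, as in \cite{Rid06}. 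An alternative and perhaps cleaner route to the density bound is a direct closing-off argument: build an elementary submodel $M\prec H(\theta)$ with $|M|=wL(X)^{t(X)}$ that is closed under $t(X)$-sequences and reflects the weakly Lindel\"of subcovers of covers coded in $M$, and show, using $\pi\chi(X)\leq t(X)$ and local compactness, that $X\cap M$ is dense in $X$; this is the strategy behind de la Vega's original $2^{L(X)t(X)pct(X)}$ bound, tuned so that the submodel has size exactly $wL(X)^{t(X)}$.
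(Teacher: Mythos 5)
First, a caveat about the comparison you were asked for: this survey does not actually prove Theorem~\ref{lcphdown}; it only states it and cites the preprint \cite{BCG2020}, so there is no in-paper proof to measure your attempt against. Judged on its own merits, your reduction is sound as far as it goes: local compactness gives $pct(X)=\omega$, your argument that $\pi\chi(X)\leq t(X)$ (\sapirovskii\ inside a compact neighborhood, plus monotonicity of tightness and invariance of $\pi$-character under passing to open neighborhoods) is correct, Theorem~\ref{ridHausdorff} then reduces the whole theorem to the density estimate $d(X)\leq wL(X)^{t(X)}$, and the closing arithmetic $(wL(X)^{t(X)})^{t(X)}=wL(X)^{t(X)}$ is right. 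Your plan for the density estimate---cover $X$ by translates $h_x[G]$ of a single compact $G_\tau$-set contained in the closure of a set of size at most $2^\tau$, then thin the cover---is also the natural one, modeled on the proof of Theorem~\ref{DLV}.

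The problem is that the step you defer is not a technical detail but the entire content of the theorem, and you have not proved it. Weak Lindel\"ofness applies to \emph{open} covers, while the sets $h_x[G]$ are not open, so there is no direct application of $wL(X)\leq\lambda$ that yields a small $S$ with $\bigcup_{x\in S}h_x[G]$ dense. (You also demand more than you need: for your computation of $|D|$ it suffices that $|S|\leq wL(X)^{t(X)}$, not $|S|\leq wL(X)$, and the larger bound is what a closing-off argument can realistically produce.) What is actually required is a Pytkeev-style argument---a weak-Lindel\"of analogue of Theorems~\ref{pyt} and~\ref{pytimprove}---in which one first arranges that each $h_x[G]$ is compact of character at most $\tau$ \emph{in $X$} (this is where local compactness genuinely enters, via compact sets of countable character; note that applying Theorem~\ref{arh2.2.4} ``inside a compact neighborhood $K$'' only gives a $G_\tau$-set with small character in $K$, and transferring that to small character in $X$ needs the factorization $\chi(G,X)\leq\chi(G,K)\cdot\chi(K,X)$), and then runs a transfinite construction of length $\tau^+$, applying weak Lindel\"ofness at each stage to the open expansions coming from the $\tau$-sized neighborhood bases of the compacta selected so far, closing under $\tau$-sized subfamilies, and finally invoking a tightness/free-sequence argument to show the union of the selected compacta is dense. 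None of this appears in your proposal; you explicitly flag it as ``the main obstacle'' and offer only candidate directions. The same applies to the power-homogeneity transport: Ridderbos's machinery produces homeomorphisms of a power $X^\kappa$, and bringing the compact $G_\tau$-set and its small dense kernel $H$ down to $X$ through projections requires specific lemmas of the kind proved in \cite{CPR2012} and \cite{Carlson2018}, not just a citation. So what you have is a plausible outline with a correct reduction, but the core argument of the theorem is missing.
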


%%%%%%%%%%%%%%%%%%%%%%%%%%%%%%%%%%%%%%%%%%%%%%%%%%%%%%
\section{Other results}
Recently it was shown in \cite{BCG2020} that if $X$ is an extremally disconnected space then $c(X)\leq w(X)\pi\chi(X)$. Using Theorem~\ref{ERPH}, the following is an immediate consequence.

\begin{theorem}[Bella, C., Gotchev \cite{BCG2020}, 2020]\label{ed}
If $X$ is power homogeneous and extremally disconnected then $|X|\leq 2^{wL(X)\pi\chi(X)}$.
\end{theorem}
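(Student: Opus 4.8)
The plan is to obtain this bound as a direct consequence of Theorem~\ref{ERPH} together with the cellularity estimate for extremally disconnected spaces recorded immediately above. Theorem~\ref{ERPH} gives $|X|\leq 2^{c(X)\pi\chi(X)}$ for every power homogeneous Hausdorff space, and since extremal disconnectedness is discussed here alongside cardinal invariants in the standard (Hausdorff) setting, that bound is available. The whole problem then reduces to replacing the exponent $c(X)\pi\chi(X)$ by $wL(X)\pi\chi(X)$, and the only fact needed for this is the inequality $c(X)\leq wL(X)\pi\chi(X)$ for extremally disconnected $X$.

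The substantive input is precisely that cellularity inequality, which I would simply invoke. The mechanism underlying it is that in an extremally disconnected space the closure of an open set is clopen, so disjoint open sets have disjoint closures: if $U\meet V=\es$ with $U,V$ open, then $\cl{V}\sse X\minus U$, whence $U\sse X\minus\cl{V}$, a clopen (hence closed) set, and taking closures once more yields $\cl{U}\meet\cl{V}=\es$. Consequently every cellular family $\{U_\alpha\}$ converts into a pairwise disjoint family of clopen sets $\{\cl{U_\alpha}\}$, and bounding the cardinality of such a family by $wL(X)\pi\chi(X)$ is exactly the content of the cited estimate from \cite{BCG2020}.

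With the cellularity bound in hand, the deduction is immediate. Using that $\pi\chi(X)\cdot\pi\chi(X)=\pi\chi(X)$ for infinite cardinals, we get
$$|X|\leq 2^{c(X)\pi\chi(X)}\leq 2^{(wL(X)\pi\chi(X))\pi\chi(X)}=2^{wL(X)\pi\chi(X)},$$
which is the asserted inequality. The only genuine obstacle is the extremally disconnected cellularity bound itself; once it is granted (as it is stated just before the theorem), the passage through Theorem~\ref{ERPH} and the cardinal arithmetic are entirely routine, so I would expend essentially no further effort on the final step and regard the cellularity inequality as the heart of the matter.
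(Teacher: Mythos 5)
Your proposal is correct and takes essentially the same route as the paper, which presents the theorem as an immediate consequence of Theorem~\ref{ERPH} combined with the cellularity estimate for extremally disconnected spaces cited from \cite{BCG2020}, followed by the same routine cardinal arithmetic. In fact you state the needed inequality in the form $c(X)\leq wL(X)\pi\chi(X)$, which is what the deduction requires; the paper's text writes it as $c(X)\leq w(X)\pi\chi(X)$, evidently a typo, since with the weight $w(X)$ the inequality is trivial and would only yield the weaker bound $2^{w(X)\pi\chi(X)}$.
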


One should regard the bound in Theorem~\ref{ed} as being ``small'' as $wL(X)$ and $\pi\chi(X)$ are generally thought of as small cardinal invariants. Observe that it follows from Theorem~\ref{ed} that an H-closed, extremally disconnected, power homogeneous space has cardinality at most $2^{\pi\chi(X)}$. However, it was shown in \cite{car07} that an infinite H-closed 
extremally disconnected space cannot be power homogeneous. This latter result is an extension of a result of 
Kunen~\cite{K90} that an infinite compact F-space is not power homogeneous.

Given a space $X$, the \emph{diagonal} of $X^2$, denoted by $\Delta_X$, is the set $\{(x,x):x\in X\}$. $X$ is said to have a \emph{regular $G_\delta$-diagonal} if there exists a countable family $\scr{U}$ of open sets in $X^2$ such that $\Delta_X=\Meet\scr{U}=\Meet\{\overline{U}:U\in\scr{U}\}$. A cardinality bound for homogeneous spaces with a regular $G_\delta$-diagonal was given in \cite{BBR2014}.

\begin{theorem}[D. Basile, Bella, Ridderbos \cite{BBR2014}, 2014]\label{regdiag}
If $X$ is a homogeneous space with a regular $G_\delta$-diagonal, then $|X|\leq wL(X)^{\pi\chi(X)}$.
\end{theorem}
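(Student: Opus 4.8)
The plan is to run the one-to-one map argument of Theorem~\ref{density} almost verbatim, but to replace the dense set $D$ by weak Lindel\"of subfamilies of the open covers extracted from the regular $G_\delta$-diagonal. The whole point will be that, although a weak Lindel\"of subfamily need only have \emph{dense} union rather than cover $X$, the regularity of the diagonal supplies enough ``box-disjointness'' to separate points anyway.

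\textbf{Step 1: turn the diagonal into a sequence of covers.} Write $\Delta_X=\Meet_n U_n=\Meet_n\cl{U_n}$ with each $U_n$ open in $X^2$. For each $n$ and each $z\in X$ the point $(z,z)$ lies in the open set $U_n$, so I can choose an open $G_n(z)\ni z$ with $G_n(z)\times G_n(z)\sse U_n$; then $\scr{G}_n=\{G_n(z):z\in X\}$ is an open cover of $X$. The separation property I will need is forced by the \emph{regular} part of the hypothesis: if $x\neq y$ then $(x,y)\notin\cl{U_n}$ for some $n$, so there are open sets $A\ni x$ and $C\ni y$ with $(A\times C)\meet U_n=\es$. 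Consequently no single member of $\scr{G}_n$ can meet both $A$ and $C$, for a point of $G\meet A$ paired with a point of $G\meet C$ would lie in $G\times G\sse U_n$ and simultaneously in $A\times C$.

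\textbf{Step 2: set up the map.} As in Theorem~\ref{density}, fix $p\in X$, homeomorphisms $h_x:X\to X$ with $h_x(p)=x$, and a local $\pi$-base $\scr{B}$ at $p$ consisting of non-empty open sets with $|\scr{B}|\leq\pichar{X}$. For each $n$, apply weak Lindel\"of to the cover $\scr{G}_n$ to extract $\scr{H}_n=\{G_{n,i}:i<\kappa\}\sse\scr{G}_n$ with $\kappa=wL(X)$ and $X=\cl{\Un\scr{H}_n}$. Define $\phi:X\to\kappa^{\scr{B}\times\omega}$ by letting $\phi(x)(B,n)$ be the least $i<\kappa$ such that $h_x[B]\meet G_{n,i}\neq\es$; this is well-defined because $h_x[B]$ is a non-empty open set (here $h_x$ is a homeomorphism) and $\Un\scr{H}_n$ is dense.

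\textbf{Step 3: injectivity, and the main obstacle.} Given $x\neq y$, take $n$, $A$, $C$ as in Step~1. Since $h_x(p)=x\in A$ and $h_y(p)=y\in C$, the point $p$ lies in the open set $h_x^{\leftarrow}[A]\meet h_y^{\leftarrow}[C]$, so there is $B\in\scr{B}$ with $B\sse h_x^{\leftarrow}[A]\meet h_y^{\leftarrow}[C]$, whence $h_x[B]\sse A$ and $h_y[B]\sse C$ \emph{simultaneously}. Now if $i=\phi(x)(B,n)$, then $G_{n,i}$ meets $h_x[B]\sse A$, so by Step~1 it misses $C\supseteq h_y[B]$; thus $i$ is not a legal index for $y$ and $\phi(y)(B,n)\neq i$. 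Hence $\phi(x)\neq\phi(y)$, giving $|X|\leq|\kappa^{\scr{B}\times\omega}|=\kappa^{|\scr{B}|\cdot\omega}\leq wL(X)^{\pichar{X}}$. The conceptual obstacle to watch is exactly that $\scr{H}_n$ need not cover $X$, so the naive star-separation could fail; what rescues the argument is choosing the \emph{single} $\pi$-base element $B$ that pushes forward into $A$ under $h_x$ and into $C$ under $h_y$ at once (possible because both preimages contain $p$), after which the box-disjointness $(A\times C)\meet U_n=\es$ makes any member of $\scr{H}_n$ meeting $h_x[B]$ automatically miss $h_y[B]$, and this asymmetry is precisely what forces the least-index codes to differ.
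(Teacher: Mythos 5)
Your argument is correct. One thing to note at the outset: the survey does not actually contain a proof of Theorem~\ref{regdiag} --- it is one of the results quoted from the literature (here \cite{BBR2014}) --- so the comparison is with the paper's general methodology rather than with a specific in-paper argument. What you have done is graft the standard regular-$G_\delta$-diagonal machinery (the covers $\scr{G}_n$ by open boxes $G_n(z)\times G_n(z)\sse U_n$, together with the observation that no member of $\scr{G}_n$ can meet both factors of a box $A\times C$ disjoint from $U_n$) onto the paper's fundamental one-to-one map technique from Theorem~\ref{density}: fix $p$, homeomorphisms $h_x$ with $h_x(p)=x$, a local $\pi$-base $\scr{B}$ at $p$ with $|\scr{B}|\leq\pichar{X}$, and choose a single $B\in\scr{B}$ with $B\sse h_x^{\leftarrow}[A]\meet h_y^{\leftarrow}[C]$, so that $h_x[B]\sse A$ and $h_y[B]\sse C$ simultaneously. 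The two delicate points are both handled correctly: well-definedness of the least-index code uses only that $\Un\scr{H}_n$ is dense, which is exactly what the weak Lindel\"of degree provides (a genuine subcover is never needed); and injectivity uses that the witness $G_{n,i}$ for $x$ meets $A$, hence by the box-disjointness misses $C\supseteq h_y[B]$, so it cannot serve as a witness for $y$, forcing $\phi(x)(B,n)\neq\phi(y)(B,n)$. The closing arithmetic $|X|\leq\kappa^{|\scr{B}|\cdot\omega}\leq wL(X)^{\pichar{X}}$ is valid since $\pichar{X}\geq\omega$. In effect your proof fills a gap the survey leaves open, and it does so in the same spirit as the paper's other ``fundamental proofs'' (Theorems~\ref{density} and~\ref{ER}), with the regular $G_\delta$-diagonal replacing the Hausdorff separation as the engine that produces the disjointness needed for injectivity.
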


A notion related to homogeneity is known as countable dense homogeneity. A separable space $X$ is \emph{countable dense homogeneous} (CDH) if given any two countable dense subsets $D$ and $E$ of $X$, there is a homeomorphism $h:X\to X$ such that $h[D]=E$. Separability is included in the definition as clearly this notion is of interest only if $X$ has a countable dense subset. Not every CDH space is homogeneous, however every connected CDH space is homogeneous \cite{FL1987}.

In \cite{av14b} it was shown that the cardinality of a CDH space is as most $\cont$.

\begin{theorem}[\arhangelskii, van Mill \cite{av14b}, 2014]\label{CDH}
The cardinality of a CDH space is at most $\cont$.
\end{theorem}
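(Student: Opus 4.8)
The plan is to bound the autohomeomorphism group $H(X)$ and then transfer that bound to $|X|$ by coding each point through the countable dense homogeneity. First I would record the easy structural facts: a CDH space is by definition separable, so I fix a countable dense set $D=\{d_n:n<\omega\}$, giving $d(X)=\omega$ and $c(X)=\omega$. As is standard in this context, I take $X$ to be Hausdorff; this is needed to apply the $H(X)$-bounds below.

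The coding step is as follows. For each $x\in X\setminus D$, the set $D\cup\{x\}$ is again countable and dense, so by countable dense homogeneity there is a homeomorphism $h_x\colon X\to X$ with $h_x[D]=D\cup\{x\}$. In particular $x\in h_x[D]$, so $x=h_x(d_{n(x)})$ for some $n(x)<\omega$. The assignment $x\mapsto(h_x,n(x))$ is then injective into $H(X)\times\omega$, since $x$ is recovered as $h_x(d_{n(x)})$; together with $|D|=\omega$ this yields $|X|\le|H(X)|$. This reduces the theorem to proving $|H(X)|\le\cont$.

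To bound $H(X)$ I would invoke Frankiewicz's Theorem~\ref{aut}, namely $|H(X)|\le 2^{\pi w(X)}$, together with the identity $\pi w(X)=d(X)\pi\chi(X)=\omega\cdot\pi\chi(X)$, valid for Hausdorff $X$ since $X$ is separable. Thus everything comes down to the single inequality $\pi\chi(X)\le\omega$: once countable $\pi$-character is known, $\pi w(X)=\omega$, hence $|H(X)|\le 2^{\omega}=\cont$ and $|X|\le\cont$. (Theorem~\ref{CRH} gives the same reduction, since $c(X)=sd(X)=\omega$ forces $|H(X)|\le 2^{\pi\chi(X)}$.)

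The main obstacle is extracting countable $\pi$-character from the CDH hypothesis. The homeomorphisms $h_x$ already show that the local structure is transported by the action of $H(X)$ on countable dense sets: since $h_x(d_{n(x)})=x$, one has $\pi\chi(x,X)=\pi\chi(d_{n(x)},X)$, so it suffices to bound $\pi\chi$ at the countably many points of $D$. The hard part is to rule out uncountable $\pi$-character at such a point. I would argue by contradiction, assuming some point has uncountable $\pi$-character, propagating this (via the maps $h_x$) to a dense set of points, and then constructing two countable dense subsets of $X$ that no homeomorphism can carry onto one another — the uncountable local $\pi$-structure at the points of one set obstructing any bicontinuous match with the other — contradicting CDH. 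This combinatorial incompatibility argument, separating the ``rich'' local behaviour forced by large $\pi$-character from what countable dense sets can encode, is where the real work lies; the preceding reduction is purely formal.
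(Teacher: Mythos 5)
Your formal reduction is fine as far as it goes (the injection $x\mapsto(h_x,n(x))$ giving $|X|\le|H(X)|\cdot\omega$ is correct for Hausdorff $X$), but the argument then funnels everything into the claim that every CDH space has countable $\pi$-character, and that claim is not merely left unproved --- it is consistently false, so no ZFC argument can close the gap. By a theorem of Stepr\={a}ns and Zhou, the Cantor cube $2^\kappa$ is CDH for every $\kappa<\mathfrak{p}$; hence whenever $\omega_1<\mathfrak{p}$ (e.g.\ under MA $+\neg$CH) the space $X=2^{\omega_1}$ is a compact CDH space with $\pi\chi(X)=\pi w(X)=\omega_1>\omega$. (This is consistent with the theorem you are proving, since $\omega_1<\mathfrak{p}$ implies $2^{\omega_1}=\mathfrak{c}$.) In particular, your proposed contradiction argument --- propagating a point of uncountable $\pi$-character to a dense set and then building two countable dense sets that no homeomorphism can match --- cannot succeed: in this example \emph{every} point has uncountable $\pi$-character, yet the space is CDH. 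There is also a structural reason the reduction to $|H(X)|$ is the wrong target: for a separable Hausdorff space the only general bound is $|H(X)|\le|X|^{\omega}$ (each homeomorphism is determined by its trace on $D$), so bounding $|H(X)|$ by $\mathfrak{c}$ is essentially equivalent to the theorem itself rather than a stepping stone toward it; Theorem~\ref{aut} and Theorem~\ref{CRH} only help when $\pi\chi(X)$ is known to be small, which is exactly what fails.

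For comparison (the survey only cites this result, so there is no in-paper proof): the \arhangelskii--van Mill argument bounds not all of $H(X)$ but only the traces of the particular witnesses $h_x$, exploiting that these traces land in a fixed countable set. Fix a countable dense $D$ and for $x\in X\setminus D$ pick $h_x\in H(X)$ with $h_x[D]=D\cup\{x\}$. Then $h_x^{-1}[D]\sse D$, so $h_x^{-1}\restriction D\in D^D$; and since two continuous maps into a Hausdorff space that agree on a dense set are equal, the assignment $x\mapsto h_x^{-1}\restriction D$ is injective: equal traces force $h_x^{-1}=h_y^{-1}$, hence $D\cup\{x\}=h_x[D]=h_y[D]=D\cup\{y\}$, hence $x=y$. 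Therefore $|X\setminus D|\le|D^D|=\mathfrak{c}$ and $|X|\le\mathfrak{c}$. This is precisely the point your write-up misses: the CDH witnesses are not arbitrary elements of $H(X)$ --- their inverses restrict to self-maps of the countable set $D$ --- and that is what makes the counting work with no hypothesis on $\pi$-character at all.
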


%%%%%%%%%%%%%%%%%%%%%%%%%%%%%%%%%%%%%%%%%%%%%%%%%%%%%%
\section{Questions and a Table of Bounds}
Recall that in Theorem~\ref{CRH} it was shown that $|H(X)|\leq 2^{c(X)\pi\chi(X)sd(X)}$ for a Hausdorff space $X$. In addition, in Theorem~\ref{ER} it was shown that $|X|\leq 2^{c(X)\pi\chi(X)}$ if $X$ is Hausdorff and homogeneous. In light of these theorems, the following was asked by the author and Ridderbos in \cite{CR08}. 
\begin{question}[C., Ridderbos, 2008]
Is $|H(X)|\leq 2^{c(X)\pi\chi(X)}$ for a Hausdorff space $X$?
\end{question}

As it was shown in \cite{BC2020a} that the cardinality of a homogeneous compactum is at most $2^{wt(X)\pi\chi(X)}$ (Theorem~\ref{cpthomog}), it is natural to ask if either of the two cardinal invariants $wt(X)$ and $\pi\chi(X)$ can be removed from this bound. The next two questions were asked in \cite{BC2020a}. The second was additionally asked by de la Vega in \cite{DLVthesis}. These two questions appear to be quite challenging. Answering either in the affirmative would likely require new breakthrough techniques, while counter-examples would likely be complicated and have intriguing properties.
\begin{question}[Bella, C. \cite{BC2020a}, 2020]
Is the cardinality of a homogeneous compactum at most $2^{wt(X)}$?
\end{question}
\begin{question}[de la Vega \cite{DLVthesis}, 2005, Bella, C. \cite{BC2020a}, 2020]
Is the cardinality of a homogeneous compactum at most $2^{\pi\chi(X)}$?
\end{question}
The power homogeneous case of Theorem~\ref{cpthomog} is also an open question.
\begin{question}[Bella, C. \cite{BC2020a}, 2020]
Is the cardinality of a power homogeneous compactum at most $2^{wt(X)\pi\chi(X)}$?
\end{question}
By results of \sapirovskii, every $T_5$ compactum has a point of countable $\pi$-character. It follows from Theorem~\ref{ER} that the cardinality of a homogeneous $T_5$ compactum is at most $2^{c(X)}$. This was observed by van Mill in \cite{VM05} and was proved for power homogeneous $T_5$ compacta in \cite{Rid09}. Note additionally that Corollary~\ref{T5} states that the cardinality of a homogeneous $T_5$ compactum is at most $2^{wt(X)}$. Van Mill asked if the cardinality of such spaces is in fact at most $\mathfrak{c}$.

\begin{question}[van Mill \cite{VM05}, 2005] 
Is the cardinality of every $T_5$  homogeneous compactum at most $\mathfrak{c}$?
\end{question}

In light of the various cardinality bounds for homogeneous-like spaces using the weak Lindel\"of degree $wL(X)$, the following was asked in \cite{BC2018}.

\begin{question}
If $X$ is power homogeneous and Tychonoff, is $|X|\leq 2^{wL(X)t(X)pct(X)}$?
\end{question}

\newpage

\begin{table}[h!]
  \begin{center}
    \caption{Strongest known cardinality bounds on spaces with homogeneous-like properties.}
    \label{tab:table1}
    \begin{tabular}{l|c|c|c|r} % <-- Alignments: 1st column left, 2nd middle and 3rd right, with vertical lines in between
      \textbf{Bound on $|X|$} & \textbf{Hypotheses on $X$} & \textbf{Proved in} & \textbf{Year} & \textbf{Thm}\\
      \hline
      $|RO(X)|^{q\psi(X)}$ & homog., Hausdorff & Ismail \cite{ism81} & 1981 & \ref{ismailq}\\
       \hline
      $d(X)^{\pi n\chi(X)}$ & homog., Hausdorff & (current paper) & 2020 & \ref{densityn}\\
       \hline
      $d(X)^{\pi\chi(X)}$ & power homog., Hausdorff & Ridderbos \cite{Rid06}& 2006 & \ref{ridHausdorff}\\
       \hline
      $d_\theta(X)^{\pi\chi(X)}$ & power homog., Urysohn & C. \cite{car07} & 2007 & \ref{Ury}\\
       \hline
      $\pi\chi(X)^{c(X)q\psi(X)}$ & homog., $T_3$ & (current paper) & 2020 & \ref{regular}\\
       \hline
      $2^{c(X)\pi\chi(X)}$ & power homog., Hausdorff & C., Ridderbos \cite{CR08} & 2008 & \ref{ERPH}\\
       \hline
      $2^{Uc(X)\pi\chi(X)}$ & power homog., & Bonanzinga, C., & 2018 & \ref{phUry}\\
                                        & (Urysohn or quasiregular) & Cuzzup\'e, Stavrova \cite{BCCS2018} &  & \\
       \hline
      $\mathfrak{c} $ & homog. compactum that & \juhasz, van Mill \cite{JVM2018}& 2018 & \ref{finite}\\
                               & is the union of finitely many &  &  & \\
                                 & countably tight subspaces &  &  & \\
       \hline
      $\mathfrak{c} $ & power homog. compactum & C. \cite{Carlson2018}& 2018 & \ref{phctblytight}\\
                               & that is the union of  &  &  & \\
                               & countably many dense &  &  & \\
                               & countably tight subspaces &  &  & \\
       \hline
      $2^{wt(X)}$ & homog., compact, $T_5$ & (current paper) & 2020 & \ref{T5}\\
       \hline
      $2^{wt(X)\pi\chi(X)}$ & homog., compactum & Bella, C. \cite{BC2020a} & 2020 & \ref{cpthomog}\\
       \hline
      $2^{L(X)F(X)pct(X)}$ & power homog., Hausdorff & C., Porter, & 2012 & \ref{F(X)}\\
     					& & Ridderbos \cite{CPR2012} & & \\
       \hline
      $2^{pwL_c(X)t(X)pct(X)}$ & homog., Hausdorff & Bella, C., \cite{BC2020b} & 2020 & \ref{pwlc}\\
       \hline
      $2^{lL(X)F(X)pct(X)}$ & homog., Hausdorff & Bella, C., \cite{BC2020b} & 2020 & \ref{lL}\\
      					& ($2^\kappa<\kappa^{+\omega},$ or $2^{<2^\kappa}=2^\kappa$ & & & \\
					& $\kappa= lL(X)F(X)pct(X))$ & & & \\
       \hline
      $2^{F(X)wL_c(X)}$ & power homog., $T_3$, & Spadaro, & 2018 & \ref{initial}\\
                              & initially $\kappa$-compact  & Szeptycki \cite{SS2018}  &  & \\
       \hline
  
      $2^{wL(X)t(X)pct(X)}$ & power homog., $T_3$,  & Bella, C., \cite{BC} & 2018 & \ref{piLindelof}\\
      					& $\pi$-base $\scr{B}$ such that $\overline{B}$& & & \\
					& is Lindel\"of for all $B\in\scr{B}$ & & & \\
       \hline
      $wL(X)^{t(X)}$ & power homog., loc. compact & Bella, C., Gotchev \cite{BCG2020} & 2020 & \ref{lcphdown}\\
       \hline
      $2^{wL(X)\pi\chi(X)}$ & power homog.,  & Bella, C., Gotchev \cite{BCG2020} & 2020 & \ref{ed}\\
      					& extremally disconnected & & & \\
       \hline
      $wL(X)^{\pi\chi(X)}$ & homog,  & D. Basile, Bella, & 2014 & \ref{regdiag}\\
      					& regular $G_\delta$ diagonal &Ridderbos \cite{BBR2014} & & \\
       \hline
      $\mathfrak{c} $ & countable dense homog. & \arhangelskii, & 2014 & \ref{CDH}\\
      					& separable &van Mill \cite{av14b} & & \\
       \hline
    \end{tabular}
  \end{center}
\end{table}

\newpage

\end{document}